\def\int{\displaystyle\!int}
\def\lim{\displaystyle\!lim}
\def\sum{\displaystyle\!sum}
\def\sup{\displaystyle\!sup}
\def\inf{\displaystyle\!inf}
\def\cap{\displaystyle\!cap}
\def\max{\displaystyle\!max}
\def\min{\displaystyle\!min}
\let\oldsection\section
\renewcommand\section{\setcounter{equation}{0}\oldsection}
\newtheorem{theorem}{Theorem}[section]
\newtheorem{lemma}[theorem]{Lemma}
\newtheorem{proposition}[theorem]{Proposition}
\theoremstyle{definition}
\theoremstyle{remark}
\newtheorem{remark}[theorem]{Remark}
\begin{document}

\title{Long time well-posedness of compressible magnetohydrodynamics boundary layer equations in Sobolev space}

\author{Shengxin Li\footnote{School of Mathematical Sciences, Shanghai Jiao Tong University, Shanghai 200240, P. R. China. Email: lishengxin@sjtu.edu.cn}
\and
Feng Xie
\footnote{School of Mathematical Sciences, Shanghai Jiao Tong University, Shanghai 200240, P. R. China.
Email: tzxief@sjtu.edu.cn}
}
\date{}
\maketitle

\begin{abstract}
In this paper we consider the long time well-posedness of solutions to two dimensional compressible magnetohydrodynamics (MHD) boundary layer equations. When the initial data is a small perturbation of a steady solution with size of $\varepsilon$ and the far-field state is also a small perturbation around such a steady solution in Sobolev space, then the lifespan of solutions is proved to be greater than $\varepsilon^{-\frac43}$.
\end{abstract}

%\textit{MSC}:
%35L05, 35L70, 35L72
%
%
%\textit{Key word}:
%\ Global well-posedness; Two dimensional; Nonlinear wave equations;  Without Compact Support; Null condition
\maketitle

%%      ---------------------------------------------------------------------
%%      ------------------- TABLE OF CONTENTS (OPTIONAL) --------------------
%%      ---------------------------------------------------------------------

% \tableofcontents

%%      ---------------------------------------------------------------------
%%      ---------------------------- BODY OF PAPER --------------------------
%%      ---------------------------------------------------------------------

%%      Please input or insert the body of your paper here.

\section{Introduction}
In this paper we  are concerned with the long time well-posedness of  two dimensional compressible non-isentropic magnetohydrodynamics (MHD) boundary layer equations in a domain $\Omega:=\{(x, y)| x\in\mathbb{T},\ y\in\mathbb{R}_+\}$:
\begin{align}\label{1.1}
\begin{cases}
\partial_t\rho+(u\partial_x+v\partial_y)\rho+\rho(\partial_xu+\partial_yv)=0,\\
\rho\left(\partial_tu+(u\partial_x+v\partial_y)u\right)+\partial_x(p+\frac{1}{2} h^2)-(h\partial_x+g\partial_y)h-\mu\partial_y^2u=0,\\
\partial_y(p+\frac{1}{2} h^2)=0,\\
c_v\rho\left(\partial_t\theta+(u\partial_x+v\partial_y)\theta\right)+p(\partial_xu+\partial_yv)-\nu\partial_y^2\theta-\mu(\partial_yu)^2-\kappa(\partial_yh)^2=0,\\
\partial_th+\partial_y(vh-ug)-\kappa\partial_y^2h=0,\\
\partial_xh+\partial_yg=0.
\end{cases}
\end{align}
Here $\rho$ denotes the boundary layer of density, $(u, v)$ the boundary layer of velocity, $(h, g)$ the boundary layer of magnetic field, and $\theta$ the boundary layer of temperature respectively. The pressure $p$ is a function of $\rho$ and $\theta$, which takes the following form
\begin{align}\label{1.2}
p=R\rho\theta,
\end{align}
 where $R>0$ is some constant. The no-slip boundary conditions on velocity and the perfectly conducting boundary conditions on the magnetic field are imposed while the Dirichlet boundary condition is imposed on the temperature.
\begin{align}\label{1.3}
(u, v, \partial_yh, g)|_{y=0}={\bf{0}}, \qquad\theta|_{y=0}=\theta^*(t, x).
\end{align}
The far-field state is denoted by
\begin{align}\label{1.4}
\lim\limits_{y\to\infty}(\rho, u, \theta, h)=(\rho^0, u^0, \theta^0, h^0)(t, x).
\end{align}
where the known functions $\rho^0, u^0, \theta^0, h^0$ are the traces of the density, tangential velocity, temperature and tangential magnetic field of the outflow on the boundary, respectively.

Before proceeding, let us first review some relevant literature on the study of Prandtl boundary layer theory. The famous Prandtl equations was first proposed by Prandtl \cite{P} in 1904 to describe the behavior of viscous fluid under high Reynolds number near the physical boundary. Under the monotonicity condition on the tangential velocity in the vertical direction, Oleinik \cite{O} obtained the local existence of solutions to 2D Prandtl equation by using the Crocoo transformation in 1960s.
One also refer to the classical book \cite{OS} for this result and some other related progress in this field. The Oleinik's local well-posedness theory was reproved by using energy methods directly in \cite{AWXY} and \cite{MW} respectively in Sobolev framework. Xin and Zhang \cite{XZ} obtained a global in time existence of weak solution by imposing an additional favorable condition on the pressure. The above results were extended to three dimensional case in \cite{LWY} and \cite{LWY2}. When the monotonicity assumption was violated, the boundary separation can be observed and the ill-posedness of the Prandtl equation in Sobolev space was thus proved, one can refer to \cite{EE, GD, GN, LWY3, LY} and the reference therein for details.

Without the monotonicity structure condition, it is nature to study Prandtl equation in analytic framework and Gevrey class due to the loss of regularity. In analytic setting, Sammatino and Caflish \cite{SC} established the local well-posedness result of Prandtl equations for the data which is analytic in both $x$ and  $y$. Then Lombardo and his collaborators \cite{LCS} removed the analytic requirement in $y$. The main arguments in these two works rely on the abstract Cauchy-Kowalewskaya theorem. Zhang and Zhang obtained the lifespan of analytic solutions to the classical Prandtl equations with small analytic initial data in \cite{ZZ}. Furthermore, if the initial data is a small analytic perturbation of a Guassian error function,  Ignatova and Vicol \cite{IV} proved an almost global existence for the Prandtl equations. Very recently, Paicu and Zhang \cite{PZ} established the global well-posedness of analytic solutions. On the other hand, in the Gevrey class, G\'ervard-varet and Masmoudi \cite{GM} proved the local well-posedness for 2D Prandtl equation with Gevrey index $7/4$. Later \cite{CWZ} established the well-possdness for the linearized Prandtl equation around a non-monotonic shear flow. When the equation has a non-degenerate critical point, the well-posedness result was obtained in \cite{LY2} with Gevrey index 2, which is optimal in the meaning of \cite{GD}. Furthermore, without any structural condition, \cite{DG} also proved the local well-posedness in Gevrey 2. Recently, this result was extended to three dimensional Prandtl equations in \cite{LNT}.

For plasma, the MHD boundary layer equations was derived from the fundamental MHD equations with no-slip boundary conditions on velocity and perfect conducting boundary conditions on magnetic field \cite{LXY2,LXY3}. And much more abundant boundary layer phenomena are observed due to the coupling effect between the magnetic field and velocity field through the Maxwell equations. At the same time, it also produces more difficult in the mathematical analysis. In general, it is believed that suitable magnetic fields have a stabilizing effect on the boundary layer. For the incompressible magnetohydrodynamics case, \cite{LXY2, LXY3} established the well-posedness of solutions to MHD boundary layer equations and proved the validity of Prandtl boundary layer expansion in Sobolev spaces under the condition that the tangential component of magnetic filed does not degenerate near the physical boundary initially. When the initial data is a small perturbation around the steady solution in analytic space with size of $\varepsilon$, \cite{XY} proved the lifespan of analytic solutions to MHD boundary layer equations is greater than $\varepsilon^{-2^-}$.
This result was extended to global existence in analytic space in \cite{LZ3} and \cite{LX}.  The lifespan of order $\varepsilon^{-2^-}$ was also obtained in Sobolev spaces for incompressible MHD boundary layer equations \cite{CRWZ}. Moreover, under the same assumption as in \cite{LXY2}, \cite{LWXY} also established the local well-posedness of solutions to MHD boundary layer equations without magnetic viscosity.

For the compressible MHD boundary layer equations \eqref{1.1}, the local-in-time well-posedness is first proved in \cite{HLY}.
The aim of this paper is to establish the long time existence of solutions to system (\ref{1.1})-(\ref{1.4}) in Sobolev space. Precisely, if the initial data is a perturbation of a steady solution with size of $\varepsilon$ in Sobolev space, then there exists a unique solution to the initial boundary value problem (\ref{1.1})-(\ref{1.4}) with the lifespan $T_\varepsilon$ being greater than $\varepsilon^{-\frac43}$. To our knowledge, there are very few results on the compressible MHD boundary layer equations, especially for the non-isentropic case. This is one of the main motivations of the study in this paper. Moreover, there is some obviously difference between the compressible and incompressible MHD boundary layer equations in the structure of system and mathematical analysis technique. For example, there is a boundary layer for the density, and thus for the pressure. And the boundary layer function of pressure $p$ and  the boundary layer function of $\frac12h^2$ cancel each other due to $\partial_y(p+\frac{1}{2} h^2)=0$. Thus, these facts will lead to the corresponding difference in analysis between the compressible and incompressible cases. At this moment, the positive lower bound of lifespan is proved to be greater that $\varepsilon^{-\frac43}$. To improve this lower bound of lifespan is also interesting in analysis. In addition, compared with the known results on long time existence or global existence of solutions to the incompressible MHD boundary layer equations in analytic space \cite{LX, LZ3, XY} or Sobolev space \cite{CRWZ}, we do not need to use Gaussian weighted functions any more in performing energy estimates, which makes the analysis include more function classes.

For the related compressible isentropic MHD boundary layer equations, it can be written as follows.
\begin{align}\label{1.5}
\begin{cases}
\partial_t\rho+(u\partial_x+v\partial_y)\rho+\rho(\partial_xu+\partial_yv)=0,\\
\rho\left(\partial_tu+(u\partial_x+v\partial_y)u\right)+\partial_x(p+\frac{1}{2} h^2)-(h\partial_x+g\partial_y)h-\mu\partial_y^2u=0,\\
\partial_y(p+\frac{1}{2} h^2)=0,\\
\partial_th+\partial_y(vh-ug)-\kappa\partial_y^2h=0,\\
\partial_xh+\partial_yg=0.
\end{cases}
\end{align}
The pressure $p$ is a function of $\rho$ which takes the following form.
\begin{align}\label{1.6}
p=\rho^\gamma, \quad \gamma\ge 1.
\end{align}
The boundary condition and the far-field equation are the same as (\ref{1.3})-(\ref{1.4}).

To show the main strategy to establish the long time well-posedness of compressible MHD boundary layer equations, we will first consider the isentropic case \eqref{1.5} with a uniform outer flow. And this idea can be extended to the non-isentropic case \eqref{1.1} with the far-fields being small perturbation of uniform states, which will be discussed in details in section 4.

Consequently, we first focus on the case that $\mu=\kappa=1$ and a uniform outflow $(\rho^0, u^0, h^0)=(1, 0, 1)$ for the initial boundary value problem of compressible isentropic MHD boundary layer equations (\ref{1.5}). Since we consider the system of equations (\ref{1.5}) in the framework of small perturbation around the steady flow $(\rho, u, v, h, g)=(1, 0, 0, 1, 0)$, it is convenient to denote
\[
 \tilde{\rho}=\rho-1,\qquad \tilde{h}=h-1.
\]
Then $(\tilde{\rho}, u, v, \tilde{h}, g)$ solves the following system of equations.
\begin{align}\label{1.7}
\begin{cases}
\partial_t\tilde{\rho}+(u\partial_x+v\partial_y)\tilde{\rho}+\rho(\partial_xu+\partial_yv)=0,\\
\rho\left(\partial_tu+(u\partial_x+v\partial_y)u\right)+\partial_x\left(p+\frac{1}{2} h^2\right)-(h\partial_x+g\partial_y)\tilde{h}-\partial_y^2u=0,\\
\partial_y\left(p+\frac{1}{2} h^2\right)=0,\\
\partial_t\tilde{h}+(u\partial_x+v\partial_y)\tilde{h}+h(\partial_xu+\partial_yv)-(h\partial_x+g\partial_y)u-\partial_y^2\tilde{h}=0,\\
\partial_x\tilde{h}+\partial_yg=0.
\end{cases}
\end{align}
From the third equation in (\ref{1.7}), we have
\begin{align}\label{1.8}
\left(p+\frac 12h^2\right)(t, x, y)\equiv \left((\rho^0)^\gamma+\frac 12(h^0)^2\right)(t, x, 0)=\frac32,
\end{align}
which implies
\begin{align}\label{1.9}
p(t, x, y)=\frac32-\frac 12h^2(t, x, y)>0.
\end{align}
On the other hand, it follows from (\ref{1.6}) that
\begin{align}\label{1.10}
\frac{\partial_i\tilde{\rho}}{\rho}=\frac{\partial_ip}{\gamma p},\quad i=t, x, y,
\end{align}
Substitute the above relationships of (\ref{1.9}) and (\ref{1.10}) into the first equation in (\ref{1.7}), we obtain
\begin{align}\label{1.11}
\partial_xu+\partial_yv=-\frac{\partial_t p+(u\partial_x+v\partial_y)p}{\gamma p}=\frac{h(\partial_t +u\partial_x+v\partial_y)\tilde{h}}{\gamma(\frac32-\frac12 h^2)}.
\end{align}
Combining \eqref{1.11} with the fourth equation in (\ref{1.7}) yields that
\begin{align}\label{1.12}
\partial_xu+\partial_yv=\frac{h(h\partial_x+g\partial_y)u+ h\partial_y^2\tilde{h}}{\gamma(\frac32-\frac12 h^2)+h^2}.
\end{align}
To simplicity of the representation, it is helpful to introduce the following notations.
\begin{align}\label{1.13}
A:=\frac 1{(\frac32-\frac 12h^2)^{1/\gamma}},\quad B:=1-\frac{h^2}{\gamma(\frac32-\frac 12h^2)+h^2},\quad C:=\frac 1{\gamma(\frac32-\frac 12h^2)+h^2}.
\end{align}
Obviously, $A, B, C$ are all positive and bounded provided that $h$ is a small perturbation around $1$. Thus, the system of equations (\ref{1.7}) can be rewritten in the following form
\begin{align}\label{1.14}
\begin{cases}
\partial_tu+(u\partial_x+v\partial_y)u-A(h\partial_x+g\partial_y)\tilde{h}-A\partial_y^2u=0,\\
\partial_t\tilde{h}+(u\partial_x+v\partial_y)\tilde{h}-B(h\partial_x+g\partial_y)u-B\partial_y^2\tilde{h}=0,\\
\partial_xu+\partial_yv=C\left(h(h\partial_x+g\partial_y)u+ h\partial_y^2\tilde{h}\right),\\
\partial_x\tilde{h}+\partial_yg=0,\\
(u, \tilde{h})|_{t=0}=(u_0, \tilde{h}_0)(x, y),
\end{cases}
\end{align}
with the boundary condition
\begin{align}\label{1.15}
(u, v, \partial_y\tilde{h}, g)|_{y=0}=\bf{0},
\end{align}
and the corresponding far-field condition
\begin{align}\label{1.16}
\lim\limits_{y\to\infty}(u, \tilde{h})=(0, 0).
\end{align}
To state the main result, we introduce the following anisotropic Sobolev space $H^{k, l}(\Omega)$. It consists of all functions $f\in L^2(\Omega)$ that for any $k, l\in\mathbb{N}$ satisfies
\[
\|f\|_{H^{k, l}}^2=\sum\limits_{\alpha=0}^{k}\sum\limits_{\beta=0}^{l}\|\partial_\tau^\alpha\partial_y^\beta f\|_{L^2(\Omega)}^2<+\infty,
\]
where $\partial_\tau^{\alpha}=\partial_{t}^{\alpha_1}\partial_{x}^{\alpha_2}$ with $\alpha=(\alpha_1, \alpha_2)$, $|\alpha|=\alpha_1+\alpha_2$. Now it is ready to state the first result.
\begin{theorem}\label{Thm1}(Isentropic case)
For the initial-boundary value problem (\ref{1.14})-(\ref{1.16}), there exists $\varepsilon_0>0$, such that for all $\varepsilon\in(0, \varepsilon_0)$, and the initial data $(u_0, \tilde{h}_0)$ satisfies
\begin{align}\label{1.17}
M\sum\limits_{|\alpha|=0}^{3}\left(\|\sqrt{B_0}\partial_\tau^\alpha u_0\|_{L^2}+\|\sqrt{A_0}\partial_\tau^\alpha\tilde{h}_0\|_{L^2}\right)+\sum\limits_{|\alpha|=0}^{2}\left(\|\sqrt{B_0}\partial_\tau^\alpha\partial_y u_0\|_{L^2}+\|\sqrt{A_0}\partial_\tau^\alpha\partial_y\tilde{h}_0\|_{L^2}\right)\le\varepsilon,
\end{align}
where $M$ is a sufficiently large constant which is independent of $\varepsilon$ and will be determined later, $A_0$ and $B_0$ denote the initial data of $A$ and $B$ respectively defined in \eqref{1.13}. Then there exists a time $T_\varepsilon$, the initial-boundary value problem (\ref{1.14})-(\ref{1.16}) admits a unique solution in the time interval $(0, T_\varepsilon)$. Moreover, $T_\varepsilon$ has a positive lower bound estimate as follows.
\[
T_\varepsilon\ge C\varepsilon^{-\frac43},
\]
where $C$ is a constant independent of $\varepsilon$.
\end{theorem}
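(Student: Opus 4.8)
The plan is to prove Theorem~\ref{Thm1} by combining the local existence theory of \cite{HLY} with a uniform weighted a~priori estimate and a continuation argument: on any interval where the local solution of \eqref{1.14}--\eqref{1.16} exists, I control a suitable anisotropic Sobolev energy, which both continues the solution and reads off the lower bound on its lifespan. Guided by the regularity prescribed in \eqref{1.17}, I would work at the level of three tangential derivatives together with one normal derivative at the second tangential tier, and introduce energy and dissipation functionals weighted by the coefficients $A,B$ of \eqref{1.13},
\begin{align*}
\mathcal E(t):=\sum_{|\alpha|\le 3}\Bigl(\|\sqrt{B}\,\partial_\tau^\alpha u\|_{L^2}^2+\|\sqrt{A}\,\partial_\tau^\alpha\tilde h\|_{L^2}^2\Bigr)+\sum_{|\alpha|\le 2}\Bigl(\|\sqrt{B}\,\partial_\tau^\alpha\partial_y u\|_{L^2}^2+\|\sqrt{A}\,\partial_\tau^\alpha\partial_y\tilde h\|_{L^2}^2\Bigr),
\end{align*}
with a matching dissipation $\mathcal D(t)$ collecting the $\|\sqrt{AB}\,\partial_\tau^\alpha\partial_y u\|_{L^2}^2$ and $\|\sqrt{AB}\,\partial_\tau^\alpha\partial_y\tilde h\|_{L^2}^2$ produced by the diffusion terms $-A\partial_y^2u$ and $-B\partial_y^2\tilde h$. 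The large constant $M$ of \eqref{1.17} is carried as a weight on the top ($|\alpha|=3$) tier, so that $\mathcal E(0)\lesssim\varepsilon^2$ while the top tier remains a factor $M^{-1}$ smaller; this extra smallness is precisely what lets the cross-tier interactions be absorbed.

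The heart of the argument is the coupled energy estimate. I would apply $\partial_\tau^\alpha$ to the first two equations of \eqref{1.14}, test the $u$-equation against $B\,\partial_\tau^\alpha u$ and the $\tilde h$-equation against $A\,\partial_\tau^\alpha\tilde h$, and add. The weights $A,B$ are chosen exactly so that the two magnetic coupling terms $-A(h\partial_x+g\partial_y)\tilde h$ and $-B(h\partial_x+g\partial_y)u$ combine into $-\int AB\,(h\partial_x+g\partial_y)(u\tilde h)$; integrating by parts and using the divergence-free condition $\partial_x\tilde h+\partial_y g=0$ from \eqref{1.14} removes the leading piece and leaves only a lower-order term carrying derivatives of $AB$. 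The boundary integrals along $y=0$ are annihilated by the conditions $(u,v,\partial_y\tilde h,g)|_{y=0}=\mathbf 0$ in \eqref{1.15}, and the diffusion terms supply $\mathcal D(t)$. The normal-derivative estimates at the $\partial_\tau^\alpha\partial_y$ level are obtained analogously, differentiating once more in $y$ and using the equations to convert $\partial_y^2$ into dissipation.

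The step I expect to be the main obstacle is the loss of one tangential derivative in the transport term $v\partial_y$ and the magnetic stretching term $g\partial_y$. Because $v$ and $g$ are recovered from $u,\tilde h$ by integrating in $y$ the constraints $\partial_xu+\partial_yv=C(\cdots)$ and $\partial_x\tilde h+\partial_yg=0$, each of $\partial_\tau^\alpha v$ and $\partial_\tau^\alpha g$ carries an extra tangential derivative not controlled by $\mathcal E$ at $|\alpha|=3$. I would cancel the genuine loss exactly as in the nondegenerate MHD boundary layer theory \cite{LXY2}: the top-order contributions of $\partial_\tau^\alpha v\,\partial_y u$ (weighted by $B$) and of the matching magnetic term (weighted by $A$) cancel when the two tested equations are summed, once more through $\partial_x\tilde h+\partial_y g=0$ and the $A,B$ pairing. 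The new, compressibility-driven difficulty, absent in the incompressible case, is that $A,B,C$ of \eqref{1.13} depend on $h$, that the divergence constraint has the nonzero right-hand side $C(h(h\partial_x+g\partial_y)u+h\partial_y^2\tilde h)$, and that the pressure carries its own boundary layer through \eqref{1.9}. These force $v$ to contain an $O(\varepsilon)$ part proportional to $\partial_y\tilde h$ rather than to $\partial_x u$, and the resulting terms cannot be fully absorbed by $\mathcal D$; estimating them by Gagliardo--Nirenberg interpolation is where a weaker power of the energy, hence the weaker lifespan, is produced.

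Assembling everything, I expect a differential inequality of the form
\begin{align*}
\frac{d}{dt}\mathcal E(t)+\mathcal D(t)\le C\,\mathcal E(t)^{1/2}\,\mathcal D(t)+C\,\mathcal E(t)^{5/3},
\end{align*}
valid on the bootstrap region where $\mathcal E^{1/2}$ is small (with $M$ fixed large enough that the cross-tier remainders are controlled uniformly in $\varepsilon$). There the first term on the right is absorbed by the dissipation, leaving $\frac{d}{dt}\mathcal E\le C\mathcal E^{5/3}$. Integrating this power-type inequality with $\mathcal E(0)\lesssim\varepsilon^2$ keeps $\mathcal E(t)\lesssim\varepsilon^2$ on a time interval of length $c\,\varepsilon^{-4/3}$, which closes the bootstrap and, together with the local well-posedness and uniqueness of \cite{HLY}, produces the solution on $(0,T_\varepsilon)$ with $T_\varepsilon\ge C\varepsilon^{-4/3}$. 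The delicate point throughout is the careful accounting of which nonlinear contributions genuinely escape the dissipation, since these alone fix the exponent $5/3$ and therefore the $\varepsilon^{-4/3}$ lifespan.
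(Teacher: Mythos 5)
There is a genuine gap, and it sits exactly at the step you flag as the main obstacle: the loss of one tangential derivative through $v\partial_y$ and $g\partial_y$. You propose to stay in the original coordinates and cancel the top-order loss by summing the $B$-weighted $u$-equation against the $A$-weighted $\tilde h$-equation. That cancellation does not exist. At $|\alpha|=3$ the four dangerous terms are
$\langle B\,\partial_\tau^3 v\,\partial_y u,\partial_\tau^3 u\rangle$, $\langle A\,\partial_\tau^3 v\,\partial_y\tilde h,\partial_\tau^3\tilde h\rangle$, $-\langle AB\,\partial_\tau^3 g\,\partial_y\tilde h,\partial_\tau^3 u\rangle$, $-\langle AB\,\partial_\tau^3 g\,\partial_y u,\partial_\tau^3\tilde h\rangle$,
where $\partial_\tau^3 g=-\int_0^y\partial_x\partial_\tau^3\tilde h\,dy'$ and $\partial_\tau^3 v$ contains $-\int_0^y\partial_x\partial_\tau^3 u\,dy'$ plus compressible corrections. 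These carry two \emph{different} uncontrolled quantities ($\partial_\tau^3\partial_x u$ and $\partial_\tau^3\partial_x\tilde h$) against four different weights, so no pairing of the $A,B$ coefficients makes them cancel upon summation; the condition $\partial_x\tilde h+\partial_y g=0$ is of no help because the loss is in $x$, not $y$. (What summation genuinely buys is the antisymmetry of the leading coupling $-ABh\partial_x\tilde h$ vs.\ $-ABh\partial_x u$, which is a different, lossless term.) In \cite{LXY2}, and in this paper, the loss is removed \emph{before} any energy estimate: one introduces the magnetic stream function $\psi$ (with $h=\partial_y\psi$, $g=-\partial_x\psi$, $h\ge c_0>0$) and performs the coordinate change $\bar y=\psi(t,x,y)$ (equivalently, one works with good unknowns of the type $\partial_\tau^\alpha u-\frac{\partial_y u}{h}\partial_\tau^\alpha\psi$). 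In the new coordinates the system (2.2)--(2.3) contains no $v$ and no $g$ at all, and only then do the weighted $H^{3,0}$ and $H^{2,1}$ estimates close, with the fractional powers $D^{1/4}E^{5/4}$, $D^{1/2}E$, $DE^{1/4}$ coming from Gagliardo--Nirenberg bounds on the coefficients $A,B$ and yielding $\frac{d}{dt}E+\tfrac{C_0}{2}D\le CE^{5/3}$ and hence $T_\varepsilon\gtrsim\varepsilon^{-4/3}$. Without this device (or a carefully executed good-unknown substitution), your energy inequality cannot be established.

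Two further discrepancies, both tied to the same omission. First, your claim that the boundary integrals at $y=0$ are annihilated by the boundary conditions fails at the normal-derivative tier: $\partial_y u|_{y=0}\neq 0$, so the term $\langle ABh^2\partial_y^2u,\partial_y u\rangle_{H^{2,0}}|_{y=0}$ survives; the paper must substitute $\partial_y^2u|_{y=0}=-\frac1h\partial_x\tilde h|_{y=0}$ from the equation, interpolate, and absorb a remainder of size $\frac{C}{M^{1/4}}D(t)$. Second, this is the actual role of $M$: it weights \emph{all} the purely tangential tiers $|\alpha|\le 3$ (not just the top one), so that the tangential dissipation $M\|\partial_y(u,\tilde h)\|_{H^{3,0}}^2$ dominates the boundary remainder from the $H^{2,1}$ estimate. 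Your cross-tier-absorption reading of $M$, with $M$ only on $|\alpha|=3$, would not serve this purpose.
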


For the initial boundary value problem of full compressible MHD boundary layer equations (\ref{1.1})-(\ref{1.4}), without loss of generality, we only concentrate on the case that $c_v=\kappa=\mu=\nu=1$ and the outflow $u^0\equiv 0$. By the same procedure as (1.14), the equation (1.1) can be changed into the following form, also refer to the system (1.14) in \cite{HLY}.
\begin{align}\label{4.1}
\begin{cases}
\partial_tu+(u\partial_x+v\partial_y)u-\frac{R\theta}{P-\frac12h^2}(h\partial_x+g\partial_y)h+\frac{R\theta}{P-\frac12h^2}P_x-\frac{R\theta}{P-\frac12h^2}\partial_y^2u=0,\\
\partial_t\theta+(u\partial_x+v\partial_y)\theta+\frac{a\theta}{Q}h(h\partial_x+g\partial_y)h-\frac{a\theta}{Q}(P_t+P_xu)\\
\qquad\qquad\qquad\qquad\qquad-\frac{a\theta}{Q}\frac{P+\frac12h^2}{P-\frac12h^2}\left[\partial_y^2\theta+(\partial_yu)^2+(\partial_yh)^2\right]-\frac{a\theta}{Q}h\partial_y^2h=0,\\
\partial_th+(u\partial_x+v\partial_y)h-\frac{P-\frac12h^2}{Q}(h\partial_x+g\partial_y)u-\frac{1-a}{Q}(P_t+P_xu)\\
\qquad\qquad\qquad\qquad\qquad-\frac{P-\frac12h^2}{Q}\partial_y^2h+\frac{a}{Q}h\left[\partial_y^2\theta+(\partial_y^2u)^2+(\partial_y^2h)^2\right]=0,\\
\partial_xu+\partial_yv=\frac{1-a}{Q}h[(h\partial_x+g\partial_y)u+\partial_y^2h]+\frac{a}{Q}[\partial_y^2\theta+(\partial_y^2u)^2+(\partial_y^2h)^2]-\frac{1-a}{Q}(P_t+P_xu),\\
\partial_xh+\partial_yg=0,\\
\lim\limits_{y\to\infty}(u, \theta, h)=(u^0, \theta^0, h^0)(t, x, 0)=: (0, \Theta, H)(t, x),\\
(u, v, \partial_yh, g)|_{y=0}={\bf{0}}, \qquad\theta|_{y=0}=\theta^*(t, x),
\end{cases}
\end{align}
with
\begin{align*}
a=\frac{R}{1+R}<1,\quad P(t, x)=p+\frac12h^2,\quad Q(t, x, y)=P+\frac12(1-2a)h^2>0.
\end{align*}
\begin{remark}\label{Rem}
The outflow should satisfy the Bernoulli's law.
\begin{align*}
\begin{cases}
P_x-HH_x=0,\\
\Theta_t-\frac{aP_t\Theta}{P+\frac12(1-2a)H^2}=0,\\
H_t-\frac{P_t H}{(P+\frac12(1-2a)H^2)(R+1)}=0.
\end{cases}
\end{align*}
\end{remark}
Now we it is position to present the main result for the full compressible MHD boundary layer equations (\ref{1.1})-(\ref{1.4}).
\begin{theorem}\label{Thm2} (Non-isentropic case)
For any $\sigma>0$, and a positive bounded function $g(t)\in L^1(\mathbb{R}_+)$, if the outflow and the boundary condition $\theta^*$ satisfy
\begin{align}\label{4.2}
\|(\Theta_x, \theta^*_t, \theta^*_x, P_t, P_x, \Theta-\theta^*)\|_{H^3(\mathbb{T}_x)}\le f(t):=\varepsilon^{1+\sigma}g(t).
\end{align}
Furthermore, there exits $\varepsilon_0>0$, such that for all $\varepsilon\in(0, \varepsilon_0)$, and the initial data $(\tilde{u}_0, \tilde{\theta}_0, \tilde{h}_0)$ which is determined in (\ref{4.12}) satisfies
\begin{align}\label{4.3}
M\sum\limits_{|\alpha|=0}^{3}&\left(\|\sqrt{G_{1, 0}}\partial_\tau^\alpha\tilde{u}_0\|_{L^2}^2+\|\sqrt{G_{2, 0}}\partial_\tau^\alpha\tilde{\theta}_0\|_{L^2}^2+\|\sqrt{G_{3, 0}}\partial_\tau^\alpha(\frac{\tilde{h}^2_0}{2})\|_{L^2}^2\right)\notag\\
&\quad+\sum\limits_{|\alpha|=0}^{2}\left(\|\sqrt{G_{1, 0}}\partial_\tau^\alpha\partial_y\tilde{u}_0\|_{L^2}^2+\|\sqrt{G_{2, 0}}\partial_\tau^\alpha\partial_y\tilde{\theta}_0\|_{L^2}^2+\|\sqrt{G_{3, 0}}\partial_\tau^\alpha\partial_y(\frac{\tilde{h}^2_0}{2})\|_{L^2}^2\right)\le\varepsilon
\end{align}
with $M$ be a sufficiently large constant, which is independent of $\varepsilon$ and will be determined later, $G_{1, 0}, G_{2, 0}, G_{3, 0}$ are the corresponding initial data of $G_1, G_2, G_3$ defined in \eqref{G}. Then, there exists a time $T_\varepsilon$, the initial boundary value problem (\ref{4.1}) admits a unique solution in the time interval $(0, T_\varepsilon)$. Moreover, the lifespan $T_\varepsilon$ has a positive lower bound estimate,
\[
T_\varepsilon\ge C\varepsilon^{-\frac43},
\]
where $C$ is a constant independent of $\varepsilon$.
\end{theorem}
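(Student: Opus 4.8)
\emph{Proof proposal.} The plan is to establish Theorem~\ref{Thm2} by carrying the weighted energy scheme of the isentropic case (Theorem~\ref{Thm1}) over to the three coupled equations of (\ref{4.1}), now with the temperature field and a genuinely non-uniform outflow. First I would work in the perturbation unknowns $(\tilde u,\tilde\theta,\tilde h)$ around the outer trace, so that the far-field and boundary data in (\ref{4.1}) are subtracted off and the evolution is driven only by quadratic interactions together with the outflow forcing bounded in (\ref{4.2}). The working energy $\mathcal{E}(t)$ is exactly the quantity controlled in (\ref{4.3})---a tangential part collecting $\partial_\tau^\alpha$ for $|\alpha|\le 3$ together with a companion part carrying one normal derivative for $|\alpha|\le 2$, in the weighted norms $\sqrt{G_1},\sqrt{G_2},\sqrt{G_3}$---with a matching dissipation $\mathcal{D}(t)$ gathered from the viscous, thermal and magnetic diffusion. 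The weights $G_1,G_2,G_3$ of (\ref{G}) are chosen precisely so that the reciprocal multipliers $G_1^{-1},G_2^{-1},G_3^{-1}$ symmetrize the three equations, and the magnetic field is measured through $\tilde h^2/2$ because it enters the pressure via $h^2/2$ and the cancellation $\partial_y(p+\tfrac12 h^2)=0$ is naturally phrased at that level.

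The three structural facts to exploit are: (i) the magnetic convection operator $h\partial_x+g\partial_y$ is skew-adjoint modulo the constraint $\partial_x h+\partial_y g=0$, so that after multiplication by the symmetrizing weights the velocity--magnetic coupling terms in the momentum and induction equations cancel against one another up to harmless boundary contributions at $y=0$ (where $u=v=g=\partial_y h=0$); (ii) the cancellation $\partial_y(p+\tfrac12 h^2)=0$, together with Bernoulli's law of Remark~\ref{Rem} for the outer trace, removes the normal pressure gradient and reduces the pressure forcing to the tangential quantities $P_t,P_x$ already controlled in (\ref{4.2}); (iii) the vertical velocity is reconstructed from the compressible divergence relation (the fourth line of (\ref{4.1})) as $v=-\int_0^y\partial_x u\,dy'+\int_0^y(\mathrm{RHS})\,dy'$, whose right-hand side contains the new term $\sim h\,\partial_y^2 h$; integrating in $y$ and using $\partial_y h|_{y=0}=0$ turns this into a contribution $\sim\partial_y h$ to $v$ that is directly absorbed by the dissipation. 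The temperature equation is treated identically, its source $(\partial_y u)^2+(\partial_y h)^2$ being quadratic in dissipation-controlled quantities.

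Assembling the $\partial_\tau^\alpha$-estimates for $|\alpha|\le 3$ and the companion $\partial_\tau^\alpha\partial_y$-estimates, I expect a closed differential inequality of the form
\[
\frac{d}{dt}\mathcal{E}+\mathcal{D}\;\lesssim\;\mathcal{E}^{1/2}\mathcal{D}+\mathcal{E}^{5/3}+f(t)\big(\mathcal{E}+\mathcal{E}^{1/2}\big),
\]
where $f(t)=\varepsilon^{1+\sigma}g(t)$ is the outflow forcing of (\ref{4.2}). For $\mathcal{E}$ below a small threshold the term $\mathcal{E}^{1/2}\mathcal{D}$ is absorbed into the dissipation; the surviving nonlinearity is estimated by $\mathcal{E}^{5/3}$, one power of the amplitude better than the naive quadratic bound $\mathcal{E}^{3/2}$, thanks to the cancellations (i)--(iii) and to anisotropic interpolation. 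The long-time bound then follows from a continuation argument keeping $\mathcal{E}(t)\le 2\mathcal{E}(0)\lesssim\varepsilon^2$ on $[0,T]$: since $\int_0^T\mathcal{E}^{5/3}\lesssim T\varepsilon^{10/3}$, the nonlinear growth stays within the budget $\varepsilon^2$ exactly when $T\lesssim\varepsilon^{-4/3}$, while the forcing couples through the amplitude $\mathcal{E}^{1/2}\sim\varepsilon$ and, with $g\in L^1(\mathbb{R}_+)$, accumulates only $\int_0^T f\,\mathcal{E}^{1/2}\,dt\lesssim\varepsilon^{2+\sigma}\|g\|_{L^1}$, which is subdominant for every $\sigma>0$---this is why the lifespan is independent of $\sigma$. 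It is precisely the density/pressure boundary-layer terms $\sim h\,\partial_y^2 h$ entering the divergence relation that limit the extra smallness and yield the exponent $\tfrac43$, in contrast to the incompressible $\varepsilon^{-2}$ of \cite{CRWZ,XY}.

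The main obstacle is the loss of one tangential derivative in the convection term $v\,\partial_y$ at the top order $|\alpha|=3$: since $\partial_\tau^\alpha v=-\int_0^y\partial_x\partial_\tau^\alpha u\,dy'+\cdots$ contains $\partial_\tau^\alpha\partial_x u$, the commutator $[\partial_\tau^\alpha,v\partial_y]\tilde u$ produces a term with one more tangential derivative than $\mathcal{E}$ controls. The resolution, as in the incompressible MHD analysis, is that the identical loss in the induction equation cancels it once the two estimates are added with the symmetrizing weights of (i)---this is where the non-degeneracy $h\approx 1$ of the tangential magnetic field is indispensable. The additional difficulty peculiar to the compressible system is that the divergence relation injects $\partial_y^2\tilde h$ (and, through the temperature equation, $(\partial_y\tilde u)^2,(\partial_y\tilde h)^2$) into $v$; handling these requires anisotropic interpolation of the form $\|\cdot\|_{L^\infty_y L^2_x}\lesssim\|\cdot\|_{L^2}^{1/2}\|\partial_y\cdot\|_{L^2}^{1/2}$ to trade regularity against the dissipation, and it is the bookkeeping of these mixed terms that I expect to be the most delicate part of closing the estimate.
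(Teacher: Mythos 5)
Your energy framework, choice of weights $G_1,G_2,G_3$ (diagonal entries of a symmetrizer), final differential inequality, and the endgame (integrating $\mathcal{E}^{5/3}$ over $[0,T]$ to get $T\lesssim\varepsilon^{-4/3}$, with the forcing contributing only $\varepsilon^{2+\sigma}\|g\|_{L^1}$ via $g\in L^1$) all match the paper's structure, including the reduced inequality $\frac{d}{dt}E+\frac{C_0}{2}D\le CE^{5/3}+Cf(t)E^{1/2}$ in (\ref{4.34}). However, there is a genuine gap at exactly the point you identify as the ``main obstacle.'' The paper does not work in the original coordinates at all: its key device is the coordinate transformation $\bar t=t$, $\bar x=x$, $\bar y=\psi(t,x,y)$ built from the magnetic stream function ($h=\partial_y\psi$, $g=-\partial_x\psi$), available precisely because $h\approx1$ is non-degenerate. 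Under this transformation the vertical velocity $v$ and the normal magnetic component $g$ disappear identically from the system --- compare (\ref{4.4})--(\ref{4.5}), which contain no $v$, no $g$, and no $y$-integrals --- so the loss of a tangential derivative never arises, and the problem reduces to a quasilinear symmetrizable system (\ref{4.12}) for which the weighted $H^{3,0}$ and $H^{2,1}$ estimates close by standard commutator bounds.

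Your proposed substitute --- keeping the original coordinates, reconstructing $v$ from the divergence relation, and claiming that the derivative-loss term from $v\partial_y$ in the momentum equation cancels ``the identical loss in the induction equation'' upon adding the two weighted estimates --- does not work as stated. The two worst terms are of the form $\langle\partial_\tau^3 v\,\partial_y\tilde u,\partial_\tau^3\tilde u\rangle$ and $\langle\partial_\tau^3 v\,\partial_y\tilde h,\partial_\tau^3\tilde h\rangle$: they share the same uncontrollable factor $\partial_\tau^3 v\sim-\int_0^y\partial_x\partial_\tau^3\tilde u\,dy'$ but multiply \emph{different} quadratic structures, so no choice of symmetrizing weights makes them cancel against each other; the cancellation you invoke (skew-adjointness of $h\partial_x+g\partial_y$) only removes the \emph{coupling} terms $(h\partial_x+g\partial_y)h$ versus $(h\partial_x+g\partial_y)u$, not the $v\partial_y$ self-interaction. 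In the incompressible MHD literature this loss is resolved either by the stream-function transformation above or by introducing good unknowns of the form $\partial_\tau^\alpha u-\frac{\partial_y u}{h}\partial_\tau^\alpha\psi$ (and its analogue for $h$), which your proposal contains no counterpart of; without one of these mechanisms the top-order estimate cannot be closed. Two smaller discrepancies: the exponent $4/3$ does not come from the compressible term $h\partial_y^2h$ in the divergence relation (which never appears after the transformation), but from the coefficient bound $\|\partial_\tau S(\mathbf{v})\|_{L^\infty}\lesssim D^{1/4}E^{1/4}$ feeding the term $D^{1/4}E^{5/4}$, which Young's inequality converts to $E^{5/3}$; and the boundary terms at $y=0$ in the normal-derivative estimate (the analogue of $L_4^4$, handled by expressing $\partial_y^2\tilde u|_{y=0}$ from the equation and exploiting the large parameter $M$) are the most delicate part of the paper's proof, not ``harmless.''
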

\begin{remark}
In particular, the initial boundary value problem (\ref{4.1}) with a uniform outflow can be regarded as  a special case of Theorem
\ref{Thm2}.
\end{remark}
\begin{remark}
For the isentropic case, we also can establish the similar result as that in Theorem \ref{Thm2} provided that the outflow is also a small perturbation of uniform states.
\end{remark}
\begin{remark}
By Bernoulli's law, we do not need to impose any condition on $H_x, H_t, \Theta_t$ in \eqref{4.2}.
\end{remark}

The rest of this paper is organized as follows. In section 2, we introduce a coordinate transformation on the system (\ref{1.14}) to overcome the difficulty of loss of  regularity. At the same time some elementary lemmas are also given in this section. In section 3, the isentropic compressible MHD boundary layer system (\ref{2.3}) with uniform outflow is studied, and the positive lower bound of lifespan is obtained due to the energy estimates established in this section. The non-isentropic compressible MHD boundary layer system with a general outflow is considered in section 4, the lower positive bound of lifespan of solutions is also achieved by a similar energy estimate arguments.

Some notations used frequently in this paper are introduced below. We use the symbol $A \lesssim B$ to stand for $A \le CB$, where $C$ is a uniform constant which may vary from line to line. $\langle a, b\rangle_{H^{k, 0}}\triangleq\int_{\Omega}\partial_\tau^ka(x, y)\partial_\tau^kb(x, y)\;dxdy$ means the $H^{k, 0}$ inner product of $a$, $b$ on $\Omega\triangleq\mathbb{T}\times\mathbb{R}_+$, and $\langle a, b\rangle\triangleq\int_{\Omega}a(x, y)b(x, y)\;dxdy$ the $L^2$ inner product.

\section{Preliminary and elementary lemmas}
The main difficulty of solvability of Prandtl type boundary layer equations in Sobolev space lies in the loss of regularity with respect to tangential variable. To overcome this difficulty, we will adopt a coordinate transformation in terms of the stream function of magnetic field which is proposed in \cite{LXY2} under the assumption that the tangential magnetic field has a positive lower bound, i.e. $h(t, x, y)\ge c_0>0$. It is noted that such an assumption is automatically satisfied provided that $h(t, x, y)$ is a small perturbation of $1$.

\subsection{Coordinate transformation}
Inspired by \cite{LXY2}, from the divergence free condition of $\partial_xh+\partial_yg=0$, there exists a steam function $\psi(t, x, y)$ such that
\[
h=\partial_y\psi, \quad g=-\partial_x\psi,\quad \psi|_{y=0}=0.
\]
Moreover, from the fourth equation in (\ref{1.7}) and the boundary condition (\ref{1.15}), it is direct to check that $\psi$ satisfies the following equation
\begin{align}\label{2.1}
\partial_t\psi+(u\partial_x+v\partial_y)\psi-\partial_y^2\psi=0.
\end{align}
Under the assumption that the tangential magnetic field $h$ has a positive lower bound, it is valid to introduce the coordinate transformation,
\[
\bar{t}=t,\quad \bar{x}=x,\quad \bar{y}=\psi(t, x, y),
\]
and the new unknown functions
\[
(\hat{u}, \hat{h})(\bar{t}, \bar{x}, \bar{y}):=(u, h)(t, x, y).
\]
Under this new coordinate, the region $\{(t, x, y)|t\in (0, T_\varepsilon), x\in \mathbb{T},  y\in\mathbb{R}_+\}$ is mapped into $\{(\bar{t}, \bar{x}, \bar{y})|\bar{t}\in (0, T_\varepsilon),  \bar{x}\in\mathbb{T},  \bar{y}\in\mathbb{R}_+\}$, and the boundary of $\{ y=0\}$ ($\{ y=+\infty\}$ respectively) becomes the boundary of $\{ \bar{y}=0\}$ ($\{ \bar{y}=+\infty\}$ respectively). Also the equations in (\ref{1.14}) can be transformed into
\begin{align}\label{2.2}
\begin{cases}
\partial_{\bar{t}}u+u\partial_{\bar{x}}u-Ah\partial_{\bar{x}}\tilde{h}+(1-A)h\partial_{\bar{y}}\tilde{h}\partial_{\bar{y}}u-Ah^2\partial_{\bar{y}}^2u=0,\\
\partial_{\bar{t}}\tilde{h}+u\partial_{\bar{x}}\tilde{h}-Bh\partial_{\bar{x}}u+(1-B)h(\partial_{\bar{y}}\tilde{h})^2-Bh^2\partial_{\bar{y}}^2\tilde{h}=0.
\end{cases}
\end{align}
Here and after, we omit all ``hat" for simplicity of representation without causing confuse. Symmetrizing this system \eqref{2.2} and replacing $(\bar{t}, \bar{x}, \bar{y})$ with $(t, x, y)$, we obtain
\begin{align}\label{2.3}
\begin{cases}
B\partial_{t}u+Bu\partial_{x}u-ABh\partial_{x}\tilde{h}+(1-A)Bh\partial_{y}\tilde{h}\partial_{y}u-ABh^2\partial_{y}^2u=0,\\
A\partial_{t}\tilde{h}+Au\partial_{x}\tilde{h}-ABh\partial_{x}u+(1-B)Ah(\partial_{y}\tilde{h})^2-ABh^2\partial_{y}^2\tilde{h}=0,\\
(u, \tilde{h})|_{t=0}=(u_0, \tilde{h}_0)(x, y),
\end{cases}
\end{align}
the corresponding boundary condition
\begin{equation}\label{2.4}
(u, \partial_{y}\tilde{h})|_{y=0}=\bf{0},
\end{equation}
and the far-field state
\begin{align}\label{2.5}
\lim\limits_{y\to +\infty}(u, \tilde{h})=(0, 0).
\end{align}

\subsection{Some notations and elementary lemmas}
In this subsection, we first introduce new anisotropic Sobolev space $H^{k, l}(\Omega)$ in new coordinate with the corresponding norm.
\[
\|f\|_{H^{k, l}}^2=\sum\limits_{|\alpha|=0}^{k}\sum\limits_{\beta=0}^{l}\|\partial_\tau^\alpha\partial_y^\beta f\|
_{L^2(\Omega)}^2<+\infty.
\]
Here, $\partial_\tau^{\alpha}=\partial_{t}^{\alpha_1}\partial_{x}^{\alpha_2}$ with $\alpha=(\alpha_1, \alpha_2)$, $|\alpha|=\alpha_1+\alpha_2$ and $\Omega=\{(x, y)| x\in\mathbb{T},\ y\in\mathbb{R}_+\}$. Then we define the following energy functionals
\begin{align}\label{M}
&E(t)=M\sum\limits_{|\alpha|=0}^{3}\left(\|\sqrt{B}\partial_\tau^\alpha u\|_{L^2}^2+\|\sqrt{A}\partial_\tau^\alpha\tilde{h}\|_{L^2}^2\right)+\sum\limits_{|\alpha|=0}^{2}\left(\|\sqrt{B}\partial_\tau^\alpha\partial_y u\|_{L^2}^2+\|\sqrt{A}\partial_\tau^\alpha\partial_y\tilde{h}\|_{L^2}^2\right),\notag\\
&D(t)=M\|\partial_{y}(u, \tilde{h})\|_{H^{3, 0}}^2+\|\partial_{y}(u, \tilde{h})\|_{H^{2, 1}}.
\end{align}
Here $M$ is a suitably large constant which will be determined later.

From now on, we always assume $(u, \tilde{h})$ is a smooth solution to  (\ref{2.3})-(\ref{2.5}), and there exists a time $T$ such that
\begin{align}\label{2.6}
E(t)\le 8\varepsilon^2
\end{align}
holds for any $t\in[0, T]$.

Below, we introduce a Sobolev-Gagliardo-Nirenberg-Moser type inequality and a Sobolev embedding inequality whose proof also can be found in \cite{LXY2}.
\begin{lemma}\label{Lem2.1}
For any $f, g\in H^{k, 0}$ and $\alpha, \beta\in\mathbb{N}^2$ satisfy $|\alpha|+|\beta|=k$, it holds
\begin{align*}
\|\partial_\tau^\alpha f\partial_\tau^\beta g\|_{L^2}\lesssim \|f\|_{L^\infty}\|g\|_{H^{k, 0}}+\|g\|_{L^\infty}\|f\|_{H^{k, 0}}.
\end{align*}
\end{lemma}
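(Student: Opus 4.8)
The plan is to reduce the estimate to the classical Gagliardo--Nirenberg--Moser product inequality via the chain Hölder $\to$ interpolation $\to$ Young. Write $|\alpha|=j$ and $|\beta|=k-j$ with $0\le j\le k$, and dispose of the two endpoint cases $j=0$ and $j=k$ separately: there one factor carries no tangential derivative, so Hölder's inequality alone gives $\|f\,\partial_\tau^\beta g\|_{L^2}\le\|f\|_{L^\infty}\|\partial_\tau^\beta g\|_{L^2}\le\|f\|_{L^\infty}\|g\|_{H^{k,0}}$ (and symmetrically for $j=k$). For the interior range $1\le j\le k-1$ the strategy is to assign each factor a Lebesgue exponent proportional to the number of derivatives it carries.

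Concretely, I would first apply Hölder's inequality on $\Omega$ with the conjugate pair $p=2k/j$ and $q=2k/(k-j)$, which satisfy $\tfrac1p+\tfrac1q=\tfrac12$, to get
\[
\|\partial_\tau^\alpha f\,\partial_\tau^\beta g\|_{L^2}\le\|\partial_\tau^\alpha f\|_{L^{p}}\,\|\partial_\tau^\beta g\|_{L^{q}}.
\]
Next I would invoke the Gagliardo--Nirenberg interpolation inequality in the tangential variables $\tau=(t,x)$,
\[
\|\partial_\tau^\alpha f\|_{L^{2k/j}}\lesssim\|f\|_{L^\infty}^{1-j/k}\,\|f\|_{H^{k,0}}^{j/k},\qquad \|\partial_\tau^\beta g\|_{L^{2k/(k-j)}}\lesssim\|g\|_{L^\infty}^{j/k}\,\|g\|_{H^{k,0}}^{(k-j)/k}.
\]
Multiplying these and regrouping, the right-hand side becomes $\big(\|f\|_{L^\infty}\|g\|_{H^{k,0}}\big)^{1-j/k}\big(\|g\|_{L^\infty}\|f\|_{H^{k,0}}\big)^{j/k}$, and Young's inequality with exponents $\tfrac{k}{k-j}$ and $\tfrac{k}{j}$ turns this product of powers into the asserted sum $\|f\|_{L^\infty}\|g\|_{H^{k,0}}+\|g\|_{L^\infty}\|f\|_{H^{k,0}}$.

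The step demanding the most care is the interpolation bound, because the norms are anisotropic: $\partial_\tau^\alpha$ differentiates only in the tangential directions $(t,x)$, whereas $\Omega$ also carries the normal variable $y\in\mathbb{R}_+$, which is never differentiated. I would handle this by regarding $y$ as a parameter, applying the two-dimensional Gagliardo--Nirenberg inequality in $(t,x)$ fibrewise, bounding the $L^\infty_{t,x}$ factor uniformly by the full $L^\infty(\Omega)$ norm, and then recombining the remaining $L^2_{t,x}$ factors by Hölder's inequality in $y$ before integrating. The unboundedness of the half-line in $y$ creates no obstruction precisely because no $y$-derivative appears, and periodicity in $x$ keeps the interpolation inequality in its standard scale-invariant form. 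This is exactly the Moser-type estimate recorded in \cite{LXY2}, to which the remaining routine details may be referred.
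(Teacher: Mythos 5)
Your overall scheme---H\"older with the conjugate exponents $2k/j$ and $2k/(k-j)$, Gagliardo--Nirenberg interpolation with $L^\infty$ endpoint, then Young's inequality to pass from the product of powers to the sum---is the standard proof of this Moser-type product estimate, and your exponent bookkeeping is correct, as is the treatment of the endpoint cases $j\in\{0,k\}$ and the fibrewise-in-$y$ recombination by H\"older. The paper itself gives no proof (it defers to \cite{LXY2}), and what you wrote is essentially the argument that reference is based on.

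The genuine problem is the time variable, which is exactly the step you flagged as ``demanding the most care'' and then resolved incorrectly. In this paper the norms in Lemma \ref{Lem2.1} are taken at a \emph{fixed} time: by the definition of $H^{k,l}(\Omega)$ and of the inner product $\langle\cdot,\cdot\rangle_{H^{k,0}}$, the $L^2$, $H^{k,0}$ and $L^\infty$ norms integrate only over $(x,y)\in\mathbb{T}\times\mathbb{R}_+$, whereas $\partial_\tau$ contains $\partial_t$. Your interpolation step applies two-dimensional Gagliardo--Nirenberg in $(t,x)$ fibrewise in $y$, which requires $L^p$ norms that integrate over $t$; those are not the norms of the statement, so your argument proves the space--time version of the inequality (all norms over $(t,x,y)$), not the fixed-time one. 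Moreover this gap cannot be closed, because the fixed-time statement with time derivatives acting is false: take $f=g=\phi(y)\sin(\lambda t)$ with $\phi$ smooth and nontrivial, $\alpha=\beta=(1,0)$, $k=2$, and evaluate at $t=0$. Then $\|\partial_\tau^\alpha f\,\partial_\tau^\beta g\|_{L^2(\Omega)}=\lambda^2\|\phi^2\|_{L^2(\Omega)}>0$, while $\|f\|_{L^\infty(\Omega)}=\|g\|_{L^\infty(\Omega)}=0$ at that instant, so the right-hand side vanishes. What is true---and what your proof does establish after replacing the two-dimensional interpolation by one-dimensional Gagliardo--Nirenberg in $x$ alone---is the version in which the differentiated variables are genuinely integrated: either read $\partial_\tau$ as $\partial_x$ (for functions of $(x,y)$), or read every norm, including $L^\infty$ and $H^{k,0}$, as a space--time norm, as you implicitly did. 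As written, your proof silently changes the meaning of the norms; that reinterpretation, not the routine interpolation details, is the real content that needed to be addressed (and the paper's own later applications of the lemma to time derivatives of solutions at a fixed time rely on the problematic reading).
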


\begin{lemma}\label{Lem2.3}
For any proper $f$, if $\lim\limits_{y\to +\infty}f(x,y)=0$, then the following Gagliardo-Nirenberg inequality holds
\begin{align}\label{2.7}
\|f\|_{L_y^\infty}\le C\|f\|_{L_y^2}^{\frac12}\|\partial_y f\|_{L_y^2}^{\frac12}.
\end{align}
\end{lemma}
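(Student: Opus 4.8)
The plan is to prove this classical interpolation inequality directly via the fundamental theorem of calculus, exploiting the decay of $f$ at $y=+\infty$. Since the claimed norms are taken only in the vertical variable, I fix $x$ throughout and regard $f(x,\cdot)$ as a function of $y$ alone; the inequality will then hold pointwise in $x$. First I would use the vanishing boundary value at infinity to represent the square of $f$ as an integral of its derivative. Concretely, because $\lim_{y\to+\infty}f(x,y)=0$, for every fixed $y\ge 0$ one may write
\begin{align*}
f^2(x,y)=-\int_y^{+\infty}\partial_{y'}\bigl(f^2(x,y')\bigr)\,dy'=-2\int_y^{+\infty}f(x,y')\,\partial_{y'}f(x,y')\,dy'.
\end{align*}

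Next I would estimate the right-hand side by passing to absolute values and enlarging the domain of integration from $[y,+\infty)$ to $[0,+\infty)$, which only increases the integral. This yields
\begin{align*}
f^2(x,y)\le 2\int_0^{+\infty}\bigl|f(x,y')\bigr|\,\bigl|\partial_{y'}f(x,y')\bigr|\,dy'.
\end{align*}
Applying the Cauchy--Schwarz inequality in $y'$ to the product on the right then gives
\begin{align*}
f^2(x,y)\le 2\left(\int_0^{+\infty}\bigl|f(x,y')\bigr|^2\,dy'\right)^{1/2}\left(\int_0^{+\infty}\bigl|\partial_{y'}f(x,y')\bigr|^2\,dy'\right)^{1/2}=2\,\|f\|_{L_y^2}\,\|\partial_y f\|_{L_y^2}.
\end{align*}

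The crucial observation is that the resulting bound is independent of $y$, so I may take the supremum over $y\ge 0$ on the left without affecting the right. This produces $\|f\|_{L_y^\infty}^2\le 2\,\|f\|_{L_y^2}\,\|\partial_y f\|_{L_y^2}$, and taking square roots gives the claimed inequality with the explicit constant $C=\sqrt{2}$. There is no genuine obstacle here: the argument is entirely elementary, and the only points requiring care are ensuring $f(x,\cdot)$ is absolutely continuous in $y$ (so that the fundamental theorem of calculus applies) and that both $f$ and $\partial_y f$ are square-integrable in $y$, which is exactly the content of $f$ being ``proper'' with vanishing trace at infinity. I would remark that if either norm on the right is infinite the inequality is trivial, so one may assume both are finite, legitimizing every step above.
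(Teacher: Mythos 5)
Your proof is correct: the representation $f^2(x,y)=-2\int_y^{+\infty} f(x,y')\,\partial_{y'}f(x,y')\,dy'$ via the vanishing limit at infinity, followed by Cauchy--Schwarz and taking the supremum in $y$, is the standard argument and yields the explicit constant $C=\sqrt{2}$. The paper itself omits the proof of Lemma \ref{Lem2.3}, referring to \cite{LXY2}, where this same elementary computation appears, so your approach coincides with the intended one.
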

\begin{lemma}\label{Lem2.2}
Under the a priori assumption (\ref{2.6}), there exists $\varepsilon_0>0$, such that for any $\varepsilon\in (0, \varepsilon_0)$, it holds
\begin{equation*}
h(t, x, y)\ge \frac 12.
\end{equation*}
\end{lemma}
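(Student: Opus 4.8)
The plan is to reduce the pointwise lower bound on $h$ to an $L^\infty$ smallness estimate for the perturbation $\tilde h=h-1$ and then feed the standing energy bound \eqref{2.6} into an anisotropic Sobolev embedding. Since $h=1+\tilde h$, the conclusion $h\ge\frac12$ follows once we show $\|\tilde h\|_{L^\infty(\Omega)}\le\frac12$, so it suffices to control $\|\tilde h\|_{L^\infty}$ by the energy $E(t)$.

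First I would establish the embedding $\|f\|_{L^\infty(\Omega)}^2\lesssim\|f\|_{H^{1,0}}\|\partial_y f\|_{H^{1,0}}$ for functions decaying as $y\to+\infty$. Fixing $x$ and applying Lemma \ref{Lem2.3} in the normal variable gives $|f(x,y)|^2\le C\|f(x,\cdot)\|_{L^2_y}\|\partial_y f(x,\cdot)\|_{L^2_y}$; writing $F(x)=\|f(x,\cdot)\|_{L^2_y}^2$ and $G(x)=\|\partial_y f(x,\cdot)\|_{L^2_y}^2$ and using the one-dimensional Sobolev inequality on $\mathbb{T}$ together with $\partial_x F=2\int f\,\partial_x f\,dy$ yields $\sup_x F\lesssim\|f\|_{H^{1,0}}^2$ and $\sup_x G\lesssim\|\partial_y f\|_{H^{1,0}}^2$. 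Taking $f=\tilde h$ gives $\|\tilde h\|_{L^\infty}^2\lesssim\|\tilde h\|_{H^{1,0}}\|\partial_y\tilde h\|_{H^{1,0}}$.

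Next I would convert these norms into the weighted energy. The coefficient $A=(\frac32-\frac12 h^2)^{-1/\gamma}$ is defined precisely on the admissible range $h^2<3$, on which its base lies in $(0,\frac32]$, so $A\ge(\frac32)^{-1/\gamma}=:c_A>0$ with $c_A$ independent of the actual value of $h$. This uniform lower bound lets me write $\|\partial_\tau^\alpha\tilde h\|_{L^2}^2\lesssim\|\sqrt A\,\partial_\tau^\alpha\tilde h\|_{L^2}^2$; recalling that the order $\le 3$ terms enter $E(t)$ with the large weight $M$ while the $\partial_y$-terms enter with weight $1$, I obtain $\|\tilde h\|_{H^{1,0}}^2\lesssim E(t)/M$ and $\|\partial_y\tilde h\|_{H^{1,0}}^2\lesssim E(t)$. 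Combining with the embedding and \eqref{2.6},
\[
\|\tilde h\|_{L^\infty}^2\lesssim\sqrt{E(t)/M}\,\sqrt{E(t)}=\frac{E(t)}{\sqrt M}\le\frac{8\varepsilon^2}{\sqrt M},
\]
so $\|\tilde h\|_{L^\infty}\le\frac12$ once $\varepsilon$ is small enough, which gives $h\ge\frac12$.

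The point requiring care—rather than heavy computation—is the apparent circularity: the very definition of $A$, and hence of the energy $E(t)$ appearing in the hypothesis \eqref{2.6}, presupposes $h^2<3$, yet the conclusion is a bound on $h$ itself. The resolution is that under the standing a priori assumption the solution is smooth and $E$ is finite, so $A$ is already well defined and $h^2<3$ on $[0,T]$; the decisive structural fact is that the lower bound $A\ge c_A$ is uniform over the whole admissible range and does not depend on the unknown value of $h$, which decouples the estimate from the quantity being bounded and closes the argument.
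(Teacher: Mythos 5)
Your proposal is correct and is exactly the ``straightforward'' argument the paper leaves to the reader: write $h=1+\tilde h$, control $\|\tilde h\|_{L^\infty}$ through the Gagliardo--Nirenberg inequality of Lemma \ref{Lem2.3} combined with the one-dimensional Sobolev embedding in $x$ (the anisotropic bound $\|\tilde h\|_{L^\infty}^2\lesssim\|\tilde h\|_{H^{1,0}}\|\partial_y\tilde h\|_{H^{1,0}}$), and then use the uniform lower bound $A\ge(3/2)^{-1/\gamma}$ together with the a priori assumption \eqref{2.6} to get $\|\tilde h\|_{L^\infty}\lesssim\varepsilon\le\tfrac12$ for $\varepsilon$ small. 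Your closing remark on the apparent circularity is also the right reading: the standing assumption that $(u,\tilde h)$ is a smooth solution of \eqref{2.3} already presupposes that the coefficients $A,B$ are well defined, and the lower bound for $A$ is uniform on that admissible range.
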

The proof of Lemma \ref{Lem2.2} is straightforward.

Finally, we present some derivative estimates of the coefficient $A, B$ which will be frequently used in the next section.
\begin{lemma}\label{Lem2.4}
For $A, B$ defined in (\ref{1.13}), we have the following estimates
\begin{align}\label{2.8}
\|\partial_\tau B\|_{H^{2, 0}}+\|\partial_y B\|_{H^{2, 0}}+\|Bu\|_{H^{3, 0}}+\|Bu\|_{L^\infty}\le CE(t)^{\frac12},
\end{align}
\begin{align}\label{2.9}
\|\partial_\tau (Bu)\|_{L^\infty}+\|\partial_\tau B\|_{L^\infty}\le CD(t)^{\frac14}E(t)^{\frac14},
\end{align}
\begin{align}\label{2.10}
\|\partial_y B\|_{L^\infty}+\|\partial_y(Bu)\|_{L^\infty}+\|\partial_y(Bu)\|_{H^{2, 0}}\le CD(t)^{\frac12}.
\end{align}
It is noticed that the above estimates also hold true to replace B with A.
\end{lemma}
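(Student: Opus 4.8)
The plan is to exploit that $A=a(h)$ and $B=b(h)$ are smooth functions of $h$ alone, so that every derivative of $A,B$ reduces, through the chain rule, to a derivative of $\tilde{h}=h-1$. By Lemma \ref{Lem2.2} and the a priori bound \eqref{2.6}, the argument $h$ stays in a fixed compact interval around $1$ on which $a,b$ and all of their derivatives are bounded; hence for $|\alpha|+\beta\ge1$ the quantity $\partial_\tau^\alpha\partial_y^\beta B$ is a finite sum of terms $b^{(m)}(h)\prod_i\partial_\tau^{\alpha_i}\partial_y^{\beta_i}\tilde{h}$ with $\sum_i(\alpha_i,\beta_i)=(\alpha,\beta)$ and each multi-index nonzero. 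Since the coefficients $b^{(m)}(h)$ are bounded in $L^\infty$ and, by smallness of $E$, at least one factor in every genuinely nonlinear term is small in $L^\infty$, it will suffice in each estimate to control the single leading term $b'(h)\,\partial_\tau^\alpha\partial_y^\beta\tilde{h}$, all lower-order products being handled by the Moser inequality (Lemma \ref{Lem2.1}) and being no larger. The same reductions apply verbatim to $A$.

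Two preliminary points are needed. First, I would record the anisotropic bound $\|f\|_{L^\infty}^2\lesssim\|f\|_{H^{1,0}}\|\partial_y f\|_{H^{1,0}}$ for $f$ vanishing as $y\to\infty$, which follows by combining Lemma \ref{Lem2.3} in $y$ with one-dimensional Sobolev embedding on $\mathbb{T}$ in $x$. Second, since $B\to b(1)\ne0$ as $y\to\infty$, the function $B$ itself is not in $L^2(\Omega)$, so before invoking Lemma \ref{Lem2.1} I would split $B=\bar{B}+W$ with $\bar{B}:=b(1)$ constant and $W:=B-\bar{B}$ decaying, and only ever estimate the genuinely decaying combinations $\partial_\tau B$, $\partial_y B$, $Bu$. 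With the chain-rule reduction, the norms $\|\tilde{h}\|_{H^{3,0}}$, $\|\partial_y\tilde{h}\|_{H^{2,0}}$, $\|u\|_{H^{3,0}}$ are all $\lesssim E(t)^{1/2}$, while $\|\partial_y\tilde{h}\|_{H^{3,0}}$, $\|\partial_y u\|_{H^{3,0}}$, $\|\partial_y^2\tilde{h}\|_{H^{1,0}}$, $\|\partial_y^2u\|_{H^{1,0}}$ are $\lesssim D(t)^{1/2}$; crucially $\|\partial_y\tilde{h}\|_{H^{2,0}}$ and $\|\partial_y u\|_{H^{2,0}}$ are bounded by both $E^{1/2}$ and $D^{1/2}$, to be spent as convenient.

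For \eqref{2.8}, the leading terms of $\partial_\tau B$ and $\partial_y B$ are $b'(h)\partial_\tau^{\le3}\tilde{h}$ and $b'(h)\partial_\tau^{\le2}\partial_y\tilde{h}$, so $\|\partial_\tau B\|_{H^{2,0}}\lesssim\|\tilde{h}\|_{H^{3,0}}\lesssim E^{1/2}$ and $\|\partial_y B\|_{H^{2,0}}\lesssim\|\partial_y\tilde{h}\|_{H^{2,0}}\lesssim E^{1/2}$. For $\|Bu\|_{H^{3,0}}$ I would write $Bu=\bar{B}u+Wu$ and use Lemma \ref{Lem2.1}: $\|\bar{B}u\|_{H^{3,0}}\lesssim\|u\|_{H^{3,0}}$ and $\|Wu\|_{H^{3,0}}\lesssim\|W\|_{L^\infty}\|u\|_{H^{3,0}}+\|u\|_{L^\infty}\|W\|_{H^{3,0}}$, both $\lesssim E^{1/2}$; finally $\|Bu\|_{L^\infty}\le\|B\|_{L^\infty}\|u\|_{L^\infty}\lesssim E^{1/2}$ by the embedding. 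For \eqref{2.9} I would apply the embedding to $f=\partial_\tau B$ and to $f=\partial_\tau u$, assigning the purely tangential factor to $E$ and the factor carrying one extra vertical derivative to $D$: $\|\partial_\tau B\|_{L^\infty}^2\lesssim\|\partial_\tau B\|_{H^{1,0}}\|\partial_y\partial_\tau B\|_{H^{1,0}}\lesssim E^{1/2}D^{1/2}$ and $\|\partial_\tau u\|_{L^\infty}^2\lesssim E^{1/2}D^{1/2}$; writing $\partial_\tau(Bu)=(\partial_\tau B)u+B\partial_\tau u$ and using $\|u\|_{L^\infty}+\|B\|_{L^\infty}\lesssim1$ then gives \eqref{2.9}. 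For \eqref{2.10} I would bound $\|\partial_y B\|_{L^\infty}\lesssim\|\partial_y\tilde{h}\|_{L^\infty}$ with $\|\partial_y\tilde{h}\|_{L^\infty}^2\lesssim\|\partial_y\tilde{h}\|_{H^{1,0}}\|\partial_y^2\tilde{h}\|_{H^{1,0}}\lesssim D$, and likewise $\|\partial_y u\|_{L^\infty}\lesssim D^{1/2}$; then $\partial_y(Bu)=(\partial_y B)u+B\partial_y u$, the splitting $B=\bar{B}+W$, and Lemma \ref{Lem2.1} yield $\|\partial_y(Bu)\|_{L^\infty}+\|\partial_y(Bu)\|_{H^{2,0}}\lesssim D^{1/2}$, where in the $H^{2,0}$ estimate the factor $\|\partial_y B\|_{H^{2,0}}$ is charged to $D^{1/2}$ and $\|u\|_{L^\infty}$ to the (bounded) $E^{1/2}$.

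The routine part is the chain-rule and Moser bookkeeping together with the standing bounds on $a,b$ and their derivatives. The real point demanding care is the allocation of each factor between $E^{1/2}$ and $D^{1/2}$ so that the exact powers on the right of \eqref{2.8}--\eqref{2.10} emerge; in particular the mixed bound $D^{1/4}E^{1/4}$ in \eqref{2.9} is not obtainable from $E$ alone and forces the anisotropic $L^\infty$-split, charging the extra vertical derivative to the dissipation $D$ and the tangential regularity to $E$. A secondary but essential bookkeeping issue is to consistently peel off the non-decaying far-field constants $\bar{B}$ (resp. $\bar{A}$) before applying Lemma \ref{Lem2.1}, since neither $A$ nor $B$ lies in $L^2(\Omega)$.
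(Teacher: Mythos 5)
The paper itself omits the proof of Lemma \ref{Lem2.4} (declaring it straightforward), so there is no official argument to compare against; your proposal supplies exactly the argument the paper's toolkit intends: the chain rule for the smooth functions $A=a(h)$, $B=b(h)$ with $h$ pinned near $1$ by Lemma \ref{Lem2.2} and the a priori bound \eqref{2.6}, the Moser inequality of Lemma \ref{Lem2.1}, and the anisotropic $L^\infty$ embedding obtained from Lemma \ref{Lem2.3} combined with Sobolev embedding in $x$ (the same device the paper uses in Section 3). Your bookkeeping is correct — in particular the splitting of $\|\partial_\tau B\|_{L^\infty}$ and $\|\partial_\tau u\|_{L^\infty}$ into $E^{1/4}D^{1/4}$ by charging the extra vertical derivative to $D$, the double counting of $\|\partial_y(u,\tilde h)\|_{H^{2,0}}$ against either $E^{1/2}$ or $D^{1/2}$ as needed, and the observation that $A$, $B$ must be split off from their nonzero far-field constants before Lemma \ref{Lem2.1} can be applied, a genuine (if minor) point of care that the paper glosses over.
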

The proof is also straightforward, we leave it out here.

\section{Isentropic MHD boundary layer equations}
This section is devoted to deriving the lower bound of lifespan of the classical solution $(u, \tilde{h})$ to the system (\ref{2.3})-(\ref{2.5}). To this end, we will establish the desired energy estimates of solutions, which are listed in the following proposition.
\begin{proposition}\label{Prop3.1}
Under the a priori assumption (\ref{2.6}), for ant $t\in (0, T)$, there exist constants $C, C_0>0$, such that for any $\alpha\in\mathbb{N}^2, \beta\in\mathbb{N}$, it holds
\begin{align}\label{3.1}
\frac d{dt}\sum\limits_{\beta=0}^{1}\sum\limits_{|\alpha|=0}^{3-\beta}(\|\sqrt{B}\partial_\tau^\alpha\partial_y^\beta u\|_{L^2}^2&+\|\sqrt{A}\partial_\tau^\alpha\partial_y^\beta\tilde{h}\|_{L^2}^2)+C_0D(t) \notag\\
&\le CD(t)^{\frac 14}E(t)^{\frac54}+CD(t)^{\frac 12}E(t)+CD(t)E(t)^{\frac14}.
\end{align}
\end{proposition}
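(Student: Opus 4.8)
The plan is to perform energy estimates on the symmetrized system \eqref{2.3} at the level of tangential derivatives $\partial_\tau^\alpha$ with $|\alpha|\le 3$, together with the mixed estimate involving one $\partial_y$ (i.e. $\beta=1$, $|\alpha|\le 2$), and to close everything in the weighted energy $E(t)$ with dissipation $D(t)$. The choice of the weights $B$ for the $u$-equation and $A$ for the $\tilde{h}$-equation in \eqref{2.3} is precisely what makes the system symmetric: the off-diagonal coupling terms $-ABh\partial_x\tilde{h}$ and $-ABh\partial_x u$ carry the same coefficient $AB$, so after applying $\partial_\tau^\alpha$, multiplying the first equation by $\partial_\tau^\alpha u$ and the second by $\partial_\tau^\alpha\tilde{h}$, integrating over $\Omega$, and summing, the top-order coupling contributions $-ABh\,\partial_x\partial_\tau^\alpha\tilde{h}\,\partial_\tau^\alpha u$ and $-ABh\,\partial_x\partial_\tau^\alpha u\,\partial_\tau^\alpha\tilde{h}$ combine into a total $x$-derivative and integrate to zero on $\mathbb{T}$ (up to commutators). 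First I would treat the diffusion terms $-ABh^2\partial_y^2$: integrating by parts in $y$ produces the positive dissipation $\|h\sqrt{AB}\,\partial_y\partial_\tau^\alpha u\|_{L^2}^2$ (and its $\tilde{h}$-analogue), which by Lemma \ref{Lem2.2} ($h\ge\frac12$) and the boundedness of $A,B$ dominates a fixed multiple of $D(t)$; the boundary terms at $y=0$ vanish by the boundary conditions \eqref{2.4}.

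The main work is to control the remaining terms, all of which should be absorbed into the right-hand side of \eqref{3.1}. These fall into three types, matching the three terms $D^{1/4}E^{5/4}$, $D^{1/2}E$, $D E^{1/4}$ on the right. The transport terms $Bu\partial_x u$ and the time-derivative weights $(\partial_t B)$ generate commutators $[\partial_\tau^\alpha, Bu]\partial_x u$ etc.; I would expand these by the Leibniz rule and estimate each piece using Lemma \ref{Lem2.1} (the Gagliardo-Nirenberg-Moser inequality) to distribute derivatives, putting the lowest-order factor in $L^\infty$ via Lemma \ref{Lem2.3} and the coefficient estimates \eqref{2.8}--\eqref{2.10} from Lemma \ref{Lem2.4}. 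The point of those three estimates is exactly that $\|\partial_\tau(Bu)\|_{L^\infty}$ and $\|\partial_\tau B\|_{L^\infty}$ scale like $D^{1/4}E^{1/4}$, while $\|\partial_y(Bu)\|_{L^\infty}$ scales like $D^{1/2}$ and $\|Bu\|_{H^{3,0}}$ like $E^{1/2}$; combining these in the various product estimates reproduces precisely the three homogeneities $D^{1/4}E^{5/4}$, $D^{1/2}E$, and $DE^{1/4}$. The quadratic terms $(1-A)Bh\partial_y\tilde{h}\partial_y u$ and $(1-B)Ah(\partial_y\tilde{h})^2$, which are genuinely nonlinear in the first-order quantities, are the compressible-specific contributions; after differentiating and splitting, their highest-order pieces involve $\partial_y\partial_\tau^\alpha$ factors that pair against the dissipation, so I expect them to contribute the $DE^{1/4}$ and $D^{1/2}E$ terms.

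The hardest part will be the top-order commutators where all three derivatives land on the coefficient or on the transported quantity in a way that cannot be directly bounded by the available dissipation: specifically, when $\partial_\tau^\alpha$ with $|\alpha|=3$ hits the coupling or transport terms and produces a factor like $\partial_\tau^3 u$ paired with a coefficient carrying no $\partial_y$, there is no dissipation to absorb it, so one must instead integrate by parts in $x$ to move a derivative off the top-order factor, or exploit the antisymmetry of the coupling. Managing the interplay between the weights $A,B$ (which themselves depend on $h$ and hence must be differentiated, producing the terms controlled by \eqref{2.8}--\eqml{2.10}) and keeping careful track that every product has total homogeneity matching one of the three target terms is the delicate bookkeeping that the large constant $M$ is designed to handle: $M$ multiplies the purely tangential $H^{3,0}$ energy so that the lower-order ($\beta=0$) dissipation, weighted by $M$, can absorb cross terms coming from the $\beta=1$ estimates. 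I would carry out the $\beta=0$ estimates first, then the $\beta=1$ estimates, and finally fix $M$ large enough to close the coupling between the two levels and arrive at \eqref{3.1}.
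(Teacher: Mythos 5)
Your overall architecture matches the paper's: first the $H^{3,0}$ ($\beta=0$) estimate, then the $H^{2,1}$ ($\beta=1$) estimate, with the off-diagonal coupling terms cancelling by symmetry after integration by parts in $x$, the diffusion terms producing the dissipation after integration by parts in $y$, and Lemmas \ref{Lem2.1}--\ref{Lem2.4} distributing derivatives to generate the stated homogeneities. However, there is one genuine gap, and it is precisely the crux of the proof: your claim that ``the boundary terms at $y=0$ vanish by the boundary conditions \eqref{2.4}.'' That is true only at the $\beta=0$ level (where the test function $\partial_\tau^\alpha u$ vanishes on $\{y=0\}$, and for the $\tilde h$-equation the factor $\partial_\tau^\alpha\partial_y\tilde h$ vanishes there), and it remains true for the $\tilde h$-equation at the $\beta=1$ level since $\partial_y\tilde h|_{y=0}=0$. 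For the $u$-equation at the $\beta=1$ level it is false: testing $\partial_y(ABh^2\partial_y^2u)$ against $\partial_yu$ in $H^{2,0}$ and integrating by parts in $y$ leaves the trace term $-\langle ABh^2\partial_y^2u,\partial_yu\rangle_{H^{2,0}}|_{y=0}$, and $\partial_yu|_{y=0}\neq 0$ in general (only $u|_{y=0}=0$ is available). This term cannot be discarded, and none of the interior machinery you invoke controls it.

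The paper spends most of the proof of Proposition \ref{Prop3.3} on exactly this term (the term $J_9^4$ there). The mechanism is: (i) restrict the $u$-equation in \eqref{2.2} to $\{y=0\}$ to obtain $\partial_y^2u|_{y=0}=-\frac1h\partial_x\tilde h|_{y=0}$, which converts the lower-order pieces of the boundary term into tangential quantities (note the loss of an $x$-derivative resurfaces here, on the boundary); (ii) for the top piece $\langle ABh^2\partial_\tau^2\partial_y^2u,\partial_\tau^2\partial_yu\rangle|_{y=0}$, use the Gagliardo--Nirenberg trace inequality in $y$, which forces an estimate of $\|\partial_\tau^2\partial_y^3u\|_{L^2}$; this is obtained by differentiating the equation in $y$ and bounding each resulting term, yielding $|J_9^{4,3}|\le \frac{C}{M^{1/4}}D(t)+CD(t)E(t)^{1/4}$. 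Both the term $CD(t)E(t)^{\frac14}$ in \eqref{3.1} and the need for the large constant $M$ originate here: the factor $M^{-1/4}$ appears because one of the boundary factors is controlled by the $M$-weighted tangential part of $D(t)$, so your description of $M$'s role is right in spirit, but the ``cross term'' it absorbs is this boundary contribution, not a generic interior commutator. (Relatedly, you attribute the $D(t)E(t)^{1/4}$ term to the quadratic nonlinearities $(1-A)Bh\partial_y\tilde h\partial_yu$ and $(1-B)Ah(\partial_y\tilde h)^2$; in the paper those only produce $CD(t)E(t)^{1/2}$.) Without identifying and treating this non-vanishing trace term via the equation, the $\beta=1$ estimate, and hence \eqref{3.1}, cannot be closed.
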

The proof will be divided into two parts according to the value of $\beta$, that is, $\beta=0$, and $\beta=1$.

\subsection{Tangential derivative estimates}
The goal of this subsection is to establish $H^{3, 0}$ estimates of the solution $(u, \tilde{h})$ to the initial boundary value problem (\ref{2.3})-(\ref{2.5}).
\begin{proposition}\label{Prop3.2}
Under the a priori assumption (\ref{2.6}), for any $t\in (0, T)$, there exists constants $C, C_1>0$, such that
\begin{align}\label{3.2}
\frac d{dt}\sum\limits_{|\alpha|=0}^{3}(\|\sqrt{B}\partial_\tau^\alpha u\|_{L^2}^2&+\|\sqrt{A}\partial_\tau^\alpha\tilde{h}\|_{L^2}^2)+C_1\|\partial_y(u, \tilde{h})\|_{H^{3, 0}}^2\notag\\
&\le CD(t)^{\frac14}E(t)^{\frac54}+CD(t)^{\frac 12}E(t)+CD(t)E(t)^{\frac12}.
\end{align}
\end{proposition}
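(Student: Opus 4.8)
The plan is to perform weighted $H^{3,0}$ energy estimates directly on the symmetrized system \eqref{2.3}. For each multi-index $\alpha$ with $|\alpha|\le 3$, I would apply $\partial_\tau^\alpha$ to the first equation of \eqref{2.3} and pair it in $L^2(\Omega)$ with $\partial_\tau^\alpha u$, apply $\partial_\tau^\alpha$ to the second equation and pair it with $\partial_\tau^\alpha\tilde h$, then sum over all such $\alpha$. From the leading terms $\langle B\partial_t\partial_\tau^\alpha u,\partial_\tau^\alpha u\rangle$ and $\langle A\partial_t\partial_\tau^\alpha\tilde h,\partial_\tau^\alpha\tilde h\rangle$ I extract $\tfrac12\tfrac{d}{dt}(\|\sqrt{B}\partial_\tau^\alpha u\|_{L^2}^2+\|\sqrt{A}\partial_\tau^\alpha\tilde h\|_{L^2}^2)$, the price being commutators with $\partial_t B,\partial_t A$ that are lower-order error terms controlled by Lemma \ref{Lem2.4}. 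For the diffusion terms I integrate by parts once in $y$: the boundary contributions at $y=0$ vanish since $u|_{y=0}=0$ (hence $\partial_\tau^\alpha u|_{y=0}=0$) and $\partial_y\tilde h|_{y=0}=0$ (hence $\partial_y\partial_\tau^\alpha\tilde h|_{y=0}=0$), and those at $y=+\infty$ vanish by the far-field condition \eqref{2.5}. Because $A,B$ are bounded below and $h\ge\tfrac12$ by Lemma \ref{Lem2.2}, this yields the positive dissipation $\|\sqrt{ABh^2}\,\partial_y\partial_\tau^\alpha(u,\tilde h)\|_{L^2}^2\gtrsim\|\partial_y\partial_\tau^\alpha(u,\tilde h)\|_{L^2}^2$, which upon summation produces $C_1\|\partial_y(u,\tilde h)\|_{H^{3,0}}^2$; the leftover pieces in which $\partial_y$ falls on the coefficient $ABh^2$ are error terms, bounded by $\|\partial_y(ABh^2)\|_{L^\infty}\|\partial_\tau^\alpha(u,\tilde h)\|_{L^2}\|\partial_y\partial_\tau^\alpha(u,\tilde h)\|_{L^2}\lesssim D^{1/2}\cdot E^{1/2}\cdot D^{1/2}$ via \eqref{2.10}, contributing to the $DE^{1/2}$ term.

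The decisive algebraic step, and the whole reason for symmetrizing \eqref{2.2} into \eqref{2.3}, is the cancellation of the coupling terms. The principal parts of $\langle\partial_\tau^\alpha(-ABh\,\partial_x\tilde h),\partial_\tau^\alpha u\rangle$ and $\langle\partial_\tau^\alpha(-ABh\,\partial_x u),\partial_\tau^\alpha\tilde h\rangle$, namely $-\langle ABh\,\partial_x\partial_\tau^\alpha\tilde h,\partial_\tau^\alpha u\rangle$ and $-\langle ABh\,\partial_x\partial_\tau^\alpha u,\partial_\tau^\alpha\tilde h\rangle$, carry the \emph{same} coefficient $ABh$. Integrating the first by parts in $x$ turns it into $\langle\partial_x(ABh)\,\partial_\tau^\alpha u,\partial_\tau^\alpha\tilde h\rangle+\langle ABh\,\partial_x\partial_\tau^\alpha u,\partial_\tau^\alpha\tilde h\rangle$, whose second piece exactly cancels the second principal part. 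What survives is only the lower-order term $\langle\partial_x(ABh)\,\partial_\tau^\alpha u,\partial_\tau^\alpha\tilde h\rangle$ together with genuine commutators, so that no tangential derivative is lost in the coupling.

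It then remains to bound all error and commutator terms through $E(t)$ and $D(t)$. For the transport terms I again exploit antisymmetry: the top-order piece $\langle Bu\,\partial_x\partial_\tau^\alpha u,\partial_\tau^\alpha u\rangle$ integrates by parts in $x$ to $-\tfrac12\int_\Omega\partial_x(Bu)\,(\partial_\tau^\alpha u)^2\,dx\,dy$, bounded by $\|\partial_x(Bu)\|_{L^\infty}\,E\lesssim D^{1/4}E^{1/4}\cdot E$ via \eqref{2.9}, which is the source of the $D^{1/4}E^{5/4}$ term. The genuine commutators in the transport, coupling and first-order terms are handled by the Moser-type product estimate (Lemma \ref{Lem2.1}) to distribute tangential derivatives, combined with the anisotropic Gagliardo--Nirenberg inequality (Lemma \ref{Lem2.3}) to trade an $L^\infty_y$ norm for the square root of a $y$-derivative; according to whether the accompanying coefficient factor is estimated by \eqref{2.9} as $D^{1/4}E^{1/4}$ or by \eqref{2.10} as $D^{1/2}$, these feed the $D^{1/4}E^{5/4}$ and $D^{1/2}E$ terms, while the genuinely quadratic-in-$\partial_y$ nonlinearities $(1-A)Bh\,\partial_y\tilde h\,\partial_y u$ and $(1-B)Ah(\partial_y\tilde h)^2$, in which two factors live in $D^{1/2}$ and one in $E^{1/2}$, produce the $DE^{1/2}$ term. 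Summing over $\alpha$ and using Lemma \ref{Lem2.4} for every coefficient norm then gives \eqref{3.2}.

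The main obstacle is precisely this last step: controlling the highest-order commutators at $|\alpha|=3$ without losing a derivative. The danger is concentrated in the quadratic-in-$\partial_y$ terms and in the commutators where all three tangential derivatives fall on $h$ or on $Bu$; there one must place the factor carrying the extra $y$-derivative or the third tangential derivative in $L^2$ (so that it is absorbed into $D^{1/2}$) and the remaining factor in $L^\infty$ through Lemma \ref{Lem2.3}, and never the reverse, so that the estimate closes with the correct fractional powers of $D$ and $E$ and the dissipation $C_1\|\partial_y(u,\tilde h)\|_{H^{3,0}}^2$ is never consumed by a top-order contribution.
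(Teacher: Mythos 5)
Your proposal is correct and follows essentially the same route as the paper's proof: the weighted $H^{3,0}$ energy estimate on the symmetrized system \eqref{2.3}, the integration by parts in $x$ exploiting the common coefficient $ABh$ to cancel the top-order coupling terms, the integration by parts in $y$ for the diffusion terms (with the boundary terms vanishing exactly as you argue), and the same distribution of coefficient bounds from Lemma \ref{Lem2.4} and products via Lemmas \ref{Lem2.1} and \ref{Lem2.3} to produce the $D^{1/4}E^{5/4}$, $D^{1/2}E$ and $DE^{1/2}$ error terms. No gaps; your bookkeeping of which factor goes in $L^2$ versus $L^\infty$ matches the paper's treatment of the terms $I_1$--$I_{10}$.
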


\begin{proof}
Take the $H^{3, 0}$ inner product on the equations (\ref{2.3}) with $(u, \tilde{h})$ respectively, we have
\begin{align*}
0=\langle B\partial_t u&,  u\rangle_{H^{3, 0}}+\langle A\partial_t\tilde{h}, \tilde{h}\rangle_{H^{3, 0}}
+\left\langle Bu\partial_x u, u\right\rangle_{H^{3, 0}}+\left\langle Au\partial_x\tilde{h}, \tilde{h}\right\rangle_{H^{3, 0}}\\
&-\left\langle ABh\partial_x\tilde{h}, u\right\rangle_{H^{3, 0}}
-\left\langle ABh\partial_x u, \tilde{h}\right\rangle_{H^{3, 0}}\\
&+\left\langle (1-A)Bh\partial_y\tilde{h}\partial_y u, u\right\rangle_{H^{3, 0}}
+\left\langle (1-B)Ah\partial_y\tilde{h}\partial_y \tilde{h}, \tilde{h}\right\rangle_{H^{3, 0}}\\
&-\left\langle ABh^2\partial_y^2u, u\right\rangle_{H^{3, 0}}
-\left\langle ABh^2\partial_y^2\tilde{h}, \tilde{h}\right\rangle_{H^{3, 0}}=: \sum\limits_{i=1}^{10}I_i.
\end{align*}
Next, we will handle $I_i(i=1, ...,10)$ term by term as follows.
\begin{align}\label{3.3}
I_1&=\sum\limits_{|\alpha|=0}^{3}\left\langle\partial_\tau^\alpha(B\partial_t u), \partial_\tau^\alpha u\right\rangle=\sum\limits_{|\alpha|=0}^{3}\left\langle B\partial_\tau^\alpha\partial_t u, \partial_\tau^\alpha u\right\rangle
+\sum\limits_{|\alpha|=1}^{3}\sum\limits_{|\beta|=1}^{|\alpha|}\left\langle \partial_\tau^\beta B\partial_\tau^{\alpha-\beta}\partial_t u, \partial_\tau^\alpha u\right\rangle\notag\\
&=\frac 12\frac d{dt}\sum\limits_{|\alpha|=0}^{3}\left\|\sqrt{B}\partial_\tau^\alpha u\right\|_{L^2}^2-\frac 12\sum\limits_{|\alpha|=0}^{3}\int_{\Omega} B_t(\partial_\tau^\alpha u)^2+\sum\limits_{|\alpha|=1}^{3}\sum\limits_{|\beta|=1}^{|\alpha|}\left\langle \partial_\tau^\beta B\partial_\tau^{\alpha-\beta}\partial_t u,
\partial_\tau^\alpha u\right\rangle\notag\\
&\ge \frac 12\frac d{dt}\sum\limits_{|\alpha|=0}^{3}\left\|\sqrt{B}\partial_\tau^\alpha u\right\|_{L^2}^2-C\|\partial_\tau B\|_{L^\infty}\|u\|_{H^{3, 0}}^2-C\|\partial_\tau u\|_{L^\infty}\|\partial_\tau B\|_{H^{2, 0}}\|u\|_{H^{3, 0}}\notag\\
&\ge \frac 12\frac d{dt}\sum\limits_{|\alpha|=0}^{3}\left\|\sqrt{B}\partial_\tau^\alpha u\right\|_{L^2}^2-CD(t)^{\frac14}E(t)^{\frac54},
\end{align}
where in the last inequality, Lemma \ref{2.4} is used. Similarly,
\begin{align}\label{3.4}
I_2\ge  \frac 12\frac d{dt}\sum\limits_{|\alpha|=0}^{3}\left\|\sqrt{A}\partial_\tau^\alpha\tilde{h}\right\|_{L^2}^2-CD(t)^{\frac 14}E(t)^{\frac 54}.
\end{align}
And  $I_3$ can be estimated as follows.
\begin{align}\label{3.5}
|I_3|&\le\sum\limits_{|\alpha|=0}^{3}\left|\left\langle\partial_\tau^\alpha(Bu\partial_x u), \partial_\tau^\alpha u\right\rangle\right |\notag\\
&\le\sum\limits_{|\alpha|=0}^{3}\left|\langle Bu\partial_x\partial_\tau^\alpha u, \partial_\tau^\alpha u\rangle\right|
+\sum\limits_{|\alpha|=0}^{3}\sum\limits_{|\beta|=1}^{|\alpha|}\left|\left\langle\partial_\tau^\beta (Bu)\partial_\tau^{\alpha+1-\beta} u, \partial_\tau^\alpha u\right\rangle_{L^2}\right|\notag\\
&=\sum\limits_{|\alpha|=0}^{3}\left|-\frac 12\int_{\Omega} \partial_x (Bu)(\partial_\tau^\alpha u)^2\right|
+\sum\limits_{|\alpha|=0}^{3}\sum\limits_{|\beta|=1}^{|\alpha|}\left|\left\langle\partial_\tau^\beta (Bu)\partial_\tau^{\alpha+1-\beta} u, \partial_\tau^\alpha u\right\rangle_{L^2}\right|\notag\\
&\lesssim\|\partial_\tau(Bu)\|_{L^\infty}\|u\|_{H^{3, 0}}^2+\|\partial_\tau u\|_{L^\infty}\|Bu\|_{H^{3, 0}}\|u\|_{H^{3, 0}}\notag\\
&\le CD(t)^{\frac14}E(t)^{\frac54}.
\end{align}
Along the same line, we also have
\begin{align}\label{3.6}
|I_4|\le CD(t)^{\frac14}E(t)^{\frac54}.
\end{align}
To estimate $I_5+I_6$, we divide $I_5+I_6$ into the following three parts.
\begin{align*}
|I_5+I_6|&\le\sum\limits_{|\alpha|=0}^{3}\left|-\left\langle \partial_\tau^\alpha(ABh\partial_x\tilde{h}), \partial_\tau^\alpha u\right\rangle
-\left\langle\partial_\tau^\alpha(ABh\partial_x u), \partial_\tau^\alpha\tilde{h}\right\rangle\right|\\
&\le\left|-\left\langle ABh\partial_x\partial_\tau^3\tilde{h},\partial_\tau^3u\right\rangle-\left\langle ABh\partial_x\partial_\tau^3u,\partial_\tau^3\tilde{h}\right\rangle\right|\\
&\qquad+\sum\limits_{|\alpha|=1}^{3}\left(\left|\left\langle\partial_\tau^\alpha(ABh)\partial_\tau^{4-\alpha}\tilde{h}, \partial_\tau^3u\right\rangle\right|+\left|\left\langle\partial_\tau^\alpha(ABh)\partial_\tau^{4-\alpha}u, \partial_\tau^3\tilde{h}\right\rangle\right|\right)\\
&\qquad+\sum\limits_{|\alpha|=0}^{2}\sum\limits_{|\beta|=0}^{|\alpha|}\left(\left|\left\langle\partial_\tau^\beta(ABh)\partial_\tau^{\alpha+1-\beta}\tilde{h}, \partial_\tau^\alpha u\right\rangle\right|+\left|\left\langle\partial_\tau^\beta(ABh)\partial_\tau^{\alpha+1-\beta}u, \partial_\tau^\alpha\tilde{h}\right\rangle\right|\right).
\end{align*}
For the first term, integration by parts leads to
\begin{align*}
&\left|-\left\langle ABh\partial_x\partial_\tau^3\tilde{h},\partial_\tau^3u\right\rangle-\left\langle ABh\partial_x\partial_\tau^3u,\partial_\tau^3\tilde{h}\right\rangle\right|\\
=&\left|\left\langle\partial_x(ABh)\partial_\tau^3\tilde{h},\partial_\tau^3u\right\rangle\right|\\
\le& CD(t)^{\frac14}E(t)^{\frac54}.
\end{align*}
And for the remaining two terms, it follows from Lemma \ref{Lem2.1} and \ref{Lem2.4} that
\begin{align*}
&\sum\limits_{|\alpha|=1}^{3}\left(\left|\left\langle\partial_\tau^\alpha(ABh)\partial_\tau^{4-\alpha}\tilde{h}, \partial_\tau^3u\right\rangle\right|+\left|\left\langle\partial_\tau^\alpha(ABh)\partial_\tau^{4-\alpha}u, \partial_\tau^3\tilde{h}\right\rangle\right|\right)\\
&\qquad+\sum\limits_{|\alpha|=0}^{2}\sum\limits_{|\beta|=0}^{|\alpha|}\left(\left|\left\langle\partial_\tau^\beta(ABh)\partial_\tau^{\alpha+1-\beta}\tilde{h}, \partial_\tau^\alpha u\right\rangle\right|+\left|\left\langle\partial_\tau^\beta(ABh)\partial_\tau^{\alpha+1-\beta}u, \partial_\tau^\alpha\tilde{h}\right\rangle\right|\right)\\
&\le CD(t)^{\frac14}E(t)^{\frac54}.
\end{align*}
Collecting the above two inequalities, we get
\begin{align}\label{3.7}
|I_5+I_6|\le CD(t)^{\frac14}E(t)^{\frac54}.
\end{align}
$I_7$ can be estimated in a direct way.
\begin{align*}
|I_7|&=\left|\left\langle (1-A)Bh\partial_y\tilde{h}\partial_y u, u\right\rangle_{H^{3, 0}}\right|\\
&\lesssim\left\|(1-A)Bh)\right\|_{L^\infty}\|\partial_y\tilde{h}\partial_y u\|_{H^{3, 0}}\|u\|_{H^{3, 0}}+\|\partial_y\tilde{h}\partial_y u\|_{L^\infty}\|(1-A)Bh\|_{H^{3, 0}}\|u\|_{H^{3, 0}}.
\end{align*}
From the Gagliardo-Nirenberg inequality,
\begin{align*}
\|\partial_y(u, \tilde{h})\|_{L^\infty}\le C\|\partial_y(u, \tilde{h})\|_{H^{1, 0}}^\frac12\|\partial_y^2(u, \tilde{h})\|_{H^{1, 0}}^\frac12\le CD(t)^{\frac12},
\end{align*}
and by Lemma \ref{Lem2.1}, we have
\begin{align*}
\|\partial_y\tilde{h}\partial_y u\|_{H^{3, 0}}\lesssim\|\partial_y\tilde{h}\|_{L^\infty}\|\partial_y u\|_{H^{3, 0}}+\|\partial_y u\|_{L^\infty}\|\partial_y\tilde{h}\|_{H^{3, 0}}\le CD(t).
\end{align*}
Thus, we arrive at
\begin{align}\label{3.8}
|I_7|\le CD(t)E(t)\le CD(t)E(t)^{\frac12}.
\end{align}
By the similar arguments,
\begin{align}\label{3.9}
|I_8|\le CD(t)E(t)^{\frac12}.
\end{align}
As for $I_9$, we divide it into two parts
\begin{align*}
I_9&=-\sum\limits_{|\alpha|=0}^{3}\left\langle\partial_\tau^\alpha(ABh^2\partial_y^2 u), \partial_\tau^\alpha u\right\rangle\\
&=-\sum\limits_{|\alpha|=0}^{3}\left\langle ABh^2\partial_y^2\partial_\tau^\alpha u, \partial_\tau^\alpha u\right\rangle-\sum\limits_{|\alpha|=0}^{3}\sum\limits_{|\beta|=1}^{|\alpha|}\left\langle\partial_\tau^\beta(ABh^2)\partial_\tau^{\alpha-\beta}\partial_y^2 u, \partial_\tau^\alpha u\right\rangle\\
&=:I_9^1+I_9^2.
\end{align*}
For the first term
\begin{align*}
I_9^1&=\sum\limits_{|\alpha|=0}^{3}\int_{\Omega}ABh^2(\partial_\tau^\alpha\partial_y u)^2+\sum\limits_{|\alpha|=0}^{3}\left\langle\partial_y(ABh^2)\partial_\tau^\alpha\partial_y u, \partial_\tau^\alpha u\right\rangle\\
&\ge C\|\partial_y u\|_{H^{3, 0}}^2-\|\partial_y (ABh^2)\|_{L^\infty}\|\partial_y u\|_{H^{3, 0}}\|u\|_{H^{3, 0}}\\
&\ge C\|\partial_y u\|_{H^{3, 0}}^2-CD(t)E(t)^{\frac12}.
\end{align*}
And for the second term, by Lemma \ref{Lem2.1}, we have
\begin{align*}
|I_9^2|&\lesssim \|\partial_\tau(ABh^2)\|_{L^\infty}\|\partial_y^2u\|_{H^{2, 0}}\|u\|_{H^{3, 0}}+\|\partial_y^2 u\|_{L_\tau^\infty L_y^2}\|\partial_\tau(ABh^2)\|_{H_\tau^2L_y^\infty}\|u\|_{H^{3, 0}}\\
&\le CD(t)^{\frac 12}E(t)+CD(t)E(t)^{\frac 12}.
\end{align*}
As a consequence,
\begin{align}\label{3.10}
I_9\ge C_1\|\partial_y u\|_{H^{3, 0}}^2- CD(t)^{\frac 12}E(t)-CD(t)E(t)^{\frac12}.
\end{align}
In a similar way,
\begin{align}\label{3.11}
I_{10}\ge C_1\|\partial_y\tilde{h}\|_{H^{3, 0}}^2- CD(t)^{\frac 12}E(t)-CD(t)E(t)^{\frac12}.
\end{align}
Finally, collecting all the estimates (\ref{3.3})-(\ref{3.11}) together, we conclude that
\begin{align}\label{3.12}
\frac d{dt}\sum\limits_{|\alpha|=0}^{3}(\|\sqrt{B}\partial_\tau^\alpha u\|_{L^2}^2&+\|\sqrt{A}\partial_\tau^\alpha\tilde{h}\|_{L^2}^2)+C_1\|\partial_y(u, \tilde{h})\|_{H^{3, 0}}^2\notag\\
&\le CD(t)^{\frac14}E(t)^{\frac54}+CD(t)^{\frac 12}E(t)+CD(t)E(t)^{\frac12}.
\end{align}
Consequently, the proof of Proposition \ref{Prop3.2} is done.
\end{proof}

\subsection{Normal derivative estimates}
The subsection is intended to establish $H^{2, 1}$ estimates of the solution $(u, \tilde{h})$ to the initial boundary value problem (\ref{2.3})-(\ref{2.5}).
\begin{proposition}\label{Prop3.3}
Under the a priori assumption (\ref{2.6}), for any $t\in (0, T)$, there exist constants $C, C_2>0$, such that for $M$ large enough defined in (\ref{M}), it holds
\begin{align}\label{3.13}
\frac d{dt}\sum\limits_{|\alpha|=0}^{2}(\|\sqrt{B}\partial_\tau^\alpha\partial_y u\|_{L^2}^2&+\|\sqrt{A}\partial_\tau^\alpha\partial_y\tilde{h}\|_{L^2}^2)+C_2\|\partial_y(u, \tilde{h})\|_{H^{2, 1}}^2\notag\\
&\le CD(t)^{\frac 12}E(t)+CD(t)E(t)^{\frac14}+\frac{C}{M^{\frac14}} D(t).
\end{align}
\end{proposition}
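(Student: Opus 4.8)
The plan is to mirror the tangential argument of Proposition \ref{Prop3.2}, but now with one extra normal derivative. For each multi-index $\alpha$ with $|\alpha|\le 2$ I would apply $\partial_\tau^\alpha\partial_y$ to the two equations in \eqref{2.3}, take the $L^2(\Omega)$ inner products with $\partial_\tau^\alpha\partial_y u$ and $\partial_\tau^\alpha\partial_y\tilde h$ respectively, and sum over $|\alpha|\le 2$; this is the same as testing the $\partial_y$-differentiated system in $H^{2,0}$ against $(\partial_y u,\partial_y\tilde h)$. The terms $B\partial_t u$ and $A\partial_t\tilde h$ then reproduce $\frac12\frac{d}{dt}\sum_{|\alpha|\le2}\big(\|\sqrt B\partial_\tau^\alpha\partial_y u\|_{L^2}^2+\|\sqrt A\partial_\tau^\alpha\partial_y\tilde h\|_{L^2}^2\big)$ together with commutators handled exactly as $I_1,I_2$ via $\|\partial_\tau B\|_{L^\infty}+\|\partial_\tau A\|_{L^\infty}\lesssim D(t)^{1/4}E(t)^{1/4}$ from Lemma \ref{Lem2.4}, while the diffusion terms $-ABh^2\partial_y^2 u$ and $-ABh^2\partial_y^2\tilde h$, after one integration by parts in $y$, furnish the dissipation $C_2\|\partial_y(u,\tilde h)\|_{H^{2,1}}^2$ plus interior commutators (controlled by $\|\partial_y(ABh^2)\|_{L^\infty}\lesssim D(t)^{1/2}$ and $\|\partial_y(u,\tilde h)\|_{L^\infty}\lesssim D(t)^{1/2}$) and, critically, boundary contributions at $y=0$.

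The genuinely new feature, and the step I expect to be the main obstacle, is precisely these boundary terms. In the tangential estimates of Proposition \ref{Prop3.2} the normal integration by parts leaves no trace, because $\partial_\tau^\alpha u$ vanishes at $y=0$ (since $u|_{y=0}=0$) and the $\tilde h$ boundary factor contains $\partial_\tau^\alpha\partial_y\tilde h|_{y=0}=0$. Here the test functions themselves carry a normal derivative, so the boundary integrals do survive. For the $\tilde h$-equation the surviving factor is again $\partial_\tau^\alpha\partial_y\tilde h|_{y=0}=0$ by the perfectly conducting condition $\partial_y\tilde h|_{y=0}=0$, so that boundary term still vanishes; the only dangerous contribution is $\sum_{|\alpha|\le2}\int_{\mathbb{T}}ABh^2\,\partial_\tau^\alpha\partial_y^2 u\,\partial_\tau^\alpha\partial_y u\big|_{y=0}\,dx$ from the $u$-equation.

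To treat it I would lower the normal order at the boundary using the equation itself: restricting the first equation of \eqref{2.3} to $y=0$ and using $u|_{y=0}=0$ (hence $\partial_\tau^\alpha u|_{y=0}=0$) together with $\partial_y\tilde h|_{y=0}=0$, the equation collapses to $\partial_y^2 u|_{y=0}=-h^{-1}\partial_x\tilde h|_{y=0}$, so that $\partial_\tau^\alpha\partial_y^2 u|_{y=0}$ is expressed through purely tangential derivatives of $\tilde h$. Substituting this, the boundary term becomes a sum of products of traces of $\partial_\tau^\alpha\partial_x\tilde h$ and $\partial_\tau^\alpha\partial_y u$, which I bound by the trace inequality of Lemma \ref{Lem2.3}, $\|f|_{y=0}\|_{L^2_x}\le C\|f\|_{L^2}^{1/2}\|\partial_y f\|_{L^2}^{1/2}$. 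The key point is that $\partial_\tau^\alpha\partial_x\tilde h$ and $\partial_\tau^\alpha\partial_x\partial_y\tilde h$ are tangential quantities, hence controlled by the $M$-weighted parts of $E(t)$ and $D(t)$ and therefore carrying a factor $M^{-1/2}$; balancing the two traces bounds the whole boundary contribution by a negative power of $M$ times $D(t)$, which is the origin of the term $\tfrac{C}{M^{1/4}}D(t)$ and the reason the hypothesis ``$M$ large'' is needed.

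Finally, the remaining terms are estimated along the lines of $I_3$--$I_8$ of Proposition \ref{Prop3.2}: the convection terms $Bu\partial_x u$ and $Au\partial_x\tilde h$ via $\|\partial_\tau(Bu)\|_{L^\infty}\lesssim D(t)^{1/4}E(t)^{1/4}$; the coupling terms $-ABh\partial_x\tilde h$ and $-ABh\partial_x u$ combined, with the symmetric top-order part cancelled after integrating $\partial_x$ by parts (this is exactly what the symmetrization of \eqref{2.3} by the common coefficient $ABh$ was designed for, now at the level of $\partial_\tau^\alpha\partial_y$); and the quadratic terms $(1-A)Bh\partial_y\tilde h\,\partial_y u$ and $(1-B)Ah(\partial_y\tilde h)^2$ via $\|\partial_y(u,\tilde h)\|_{L^\infty}\lesssim D(t)^{1/2}$ together with Lemmas \ref{Lem2.1} and \ref{Lem2.4}. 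Each of these yields contributions of the form $CD(t)^{1/2}E(t)$ or $CD(t)E(t)^{1/2}\le CD(t)E(t)^{1/4}$ (using $E(t)\le 8\varepsilon^2<1$). Collecting everything and choosing $M$ large so that the boundary contribution is of the stated small size then gives \eqref{3.13}.
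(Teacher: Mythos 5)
Your overall architecture — testing the $\partial_y$-differentiated system in $H^{2,0}$, treating the interior terms as in Proposition \ref{Prop3.2}, observing that the $\tilde h$-diffusion leaves no trace because $\partial_\tau^\alpha\partial_y\tilde h|_{y=0}=0$, and isolating the $u$-boundary integral as the crux — coincides with the paper's proof, and your handling of the commutator parts of that boundary integral (where $\partial_\tau$ hits $ABh^2$, supplying extra smallness) is exactly what the paper does for $J_9^{4,1},J_9^{4,2}$. The genuine gap is in the \emph{leading} boundary piece, $\int_{\mathbb{T}}ABh^2\,\partial_\tau^2\partial_y^2u\,\partial_\tau^2\partial_y u\,|_{y=0}\,dx$, where $ABh^2\approx1$ carries no smallness. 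Substituting $\partial_y^2u|_{y=0}=-h^{-1}\partial_x\tilde h|_{y=0}$ there and applying Lemma \ref{Lem2.3} to both traces gives, at best,
\begin{align*}
\big\|\partial_\tau^2\partial_x\tilde h\big\|_{L^2}^{\frac12}\big\|\partial_\tau^2\partial_x\partial_y\tilde h\big\|_{L^2}^{\frac12}\cdot
\big\|\partial_\tau^2\partial_y u\big\|_{L^2}^{\frac12}\big\|\partial_\tau^2\partial_y^2 u\big\|_{L^2}^{\frac12}
\lesssim \Big(\tfrac{E}{M}\Big)^{\frac14}\Big(\tfrac{D}{M}\Big)^{\frac14}\cdot\Big(\tfrac{D}{M}\Big)^{\frac14}D^{\frac14}
=\frac{E^{\frac14}D^{\frac34}}{M^{\frac34}},
\end{align*}
because the top-order \emph{purely tangential} norm $\|\partial_\tau^2\partial_x\tilde h\|_{L^2}\le\|\tilde h\|_{H^{3,0}}$ is controlled only by the energy $E/M$, never by the dissipation $D$ (which contains only $\partial_y$-quantities). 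Contrary to your claim, $E^{1/4}D^{3/4}M^{-3/4}$ is \emph{not} bounded by $C\big(D^{1/2}E+DE^{1/4}+M^{-1/4}D\big)$: when $D\sim E^{3/2}$ and $E\to0$ it exceeds that right-hand side by a factor of order $E^{-1/8}M^{-1/2}$. Any Young-type absorption of it costs $\delta D+C_\delta M^{-3}E$, i.e.\ leaves a summand proportional to $E$ with an $\varepsilon$-independent constant; carried into the final inequality \eqref{3.26} this yields $\frac d{dt}(\cdots)+\frac{C_0}{2}D\le CE^{5/3}+cE$, and since $E\not\lesssim E^{5/3}$ (recall $E\le8\varepsilon^2$), Gronwall then gives only an $O(1)$ lifespan instead of $\varepsilon^{-4/3}$.

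The paper avoids this precisely by \emph{not} using the boundary-restricted equation in the leading piece $J_9^{4,3}$: it keeps $\partial_\tau^2\partial_y^2u$, applies the trace inequality to both factors — which brings in the third normal derivative $\|\partial_\tau^2\partial_y^3u\|_{L^2}$ — and then controls that quantity by differentiating the first equation of \eqref{2.2} in $y$ \emph{in the interior}, estimating the resulting terms $j_1,\dots,j_4$ to get $\|\partial_\tau^2\partial_y^3u\|_{L^2}\le C\big(D^{1/2}E^{1/2}+D+D^{1/2}\big)$. In this way every factor in the boundary product is dissipation-type (the only $E$-factors enter multiplicatively), so each resulting term carries a full power of $D$ times $M^{-1/4}$ or $E^{1/4}$, which is exactly what permits absorption for $M$ large. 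So you correctly identified the boundary term as the main obstacle and the $M$-weighting as the source of $M^{-1/4}$, but your way of closing the estimate does not deliver \eqref{3.13}; you need the paper's interior use of the equation for $\partial_y^3u$ (or an equivalent device), not the boundary identity alone.
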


\begin{proof}
Applying $\partial_y$ on equations (\ref{2.3}), and then taking the $H^{2, 0}$ inner product on the resulting equations with $(\partial_y u, \partial_y\tilde{h})$ respectively, we have
\begin{align*}
0=&\left\langle \partial_y(B\partial_t u),  \partial_y u\right\rangle_{H^{2, 0}}+\left\langle \partial_y (A\partial_t\tilde{h}), \partial_y\tilde{h}\right\rangle_{H^{2, 0}}\\
&+\left\langle \partial_y(Bu\partial_x u), \partial_y u\right\rangle_{H^{2, 0}}+\left\langle \partial_y(Au\partial_x\tilde{h}), \partial_y\tilde{h}\right\rangle_{H^{2, 0}}\\
&-\left\langle \partial_y(ABh\partial_x\tilde{h}), \partial_y u\right\rangle_{H^{2, 0}}
-\left\langle \partial_y(ABh\partial_x u), \partial_y\tilde{h}\right\rangle_{H^{2, 0}}\\
&+\left\langle \partial_y\left((1-A)Bh\partial_y\tilde{h}\partial_y u\right), \partial_y u\right\rangle_{H^{2, 0}}
+\left\langle \partial_y\left((1-B)Ah\partial_y\tilde{h}\partial_y \tilde{h}\right), \partial_y\tilde{h}\right\rangle_{H^{2, 0}}\\
&-\left\langle \partial_y(ABh^2\partial_y^2u), \partial_y u\right\rangle_{H^{2, 0}}
-\left\langle \partial_y(ABh^2\partial_y^2\tilde{h}), \partial_y\tilde{h}\right\rangle_{H^{2, 0}}=: \sum\limits_{i=1}^{10}J_i.
\end{align*}
Then, we will estimate $J_i (i=1,...,10)$ term by term.

First, we separate $J_1$ into two parts
\begin{align*}
J_1=\langle B\partial_t\partial_y u,  \partial_y u\rangle_{H^{2, 0}}+\langle \partial_y B\partial_t u,  \partial_y u\rangle_{H^{2, 0}}=: J_1^1+J_1^2.
\end{align*}
For the first term $J_1$, along the same line as (\ref{3.3}), we have
\begin{align*}
J_1^1&=\sum\limits_{|\alpha|=0}^{2}\langle B\partial_\tau^\alpha\partial_t\partial_y u, \partial_\tau^\alpha\partial_y u\rangle+\sum\limits_{|\alpha|=1}^{2}\sum\limits_{|\beta|=1}^{|\alpha|}\left\langle \partial_\tau^\beta B\partial_\tau^{\alpha-\beta}\partial_t\partial_y u, \partial_\tau^\alpha\partial_y u\right\rangle\\
&=\frac 12\frac d{dt}\sum\limits_{|\alpha|=0}^{2}\left\|\sqrt{B}\partial_\tau^\alpha\partial_y u\right\|_{L^2}^2-\frac 12\sum\limits_{|\alpha|=0}^{2}\int_{\Omega} B_t(\partial_\tau^\alpha\partial_y u)^2+\sum\limits_{|\alpha|=1}^{2}\sum\limits_{|\beta|=1}^{|\alpha|}\left\langle \partial_\tau^\beta B\partial_\tau^{\alpha-\beta}\partial_t\partial_y u, \partial_\tau^\alpha\partial_y u\right\rangle\\
&\ge \frac 12\frac d{dt}\sum\limits_{|\alpha|=0}^{2}\left\|\sqrt{B}\partial_\tau^\alpha\partial_y u\right\|_{L^2}^2-C\|\partial_\tau B\|_{L^\infty}\|u\|_{H^{2, 1}}^2-C\|\partial_\tau\partial_y u\|_{L^\infty}\|\partial_\tau B\|_{H^{1, 0}}\|u\|_{H^{2, 1}}\\
&\ge \frac 12\frac d{dt}\sum\limits_{|\alpha|=0}^{2}\left\|\sqrt{B}\partial_\tau^\alpha\partial_y u\right\|_{L^2}^2-CD(t)^{\frac12}E(t),
\end{align*}
where in the third line, we used Lemma \ref{Lem2.1}, and in the last inequality Lemma \ref{Lem2.3} is used. Moreover, by Lemma \ref{Lem2.1} and Gagliardo-Nirenberg inequality, $J_1^2$ can be estimated in the following way.
\begin{align*}
|J_1^2|&\lesssim \|\partial_y B\|_{L^\infty}\|\partial_y u\|_{H^{2, 0}}\|u\|_{H^{2, 1}}+\|\partial_y u\|_{L^\infty}\|\partial_y B\|_{H^{2, 0}}\|u\|_{H^{2, 1}}\\
&\le CD(t)^{\frac12}E(t).
\end{align*}
Combine the above two estimates, it holds
\begin{align}\label{3.14}
J_1\ge  \frac 12\frac d{dt}\sum\limits_{|\alpha|=0}^{2}\left\|\sqrt{B}\partial_\tau^\alpha\partial_y u\right\|_{L^2}^2-CD(t)^{\frac12}E(t).
\end{align}
Similarly,
\begin{align}\label{3.15}
J_2\ge  \frac 12\frac d{dt}\sum\limits_{|\alpha|=0}^{2}\left\|\sqrt{A}\partial_\tau^\alpha\partial_y\tilde{h}\right\|_{L^2}^2-CD(t)^{\frac12}E(t).
\end{align}
For $J_3$, again by Lemma \ref{Lem2.1}, one has
\begin{align}\label{3.16}
|J_3|&\le\left|\left\langle Bu\partial_x\partial_y u, \partial_y u\right\rangle_{H^{2, 0}}\right|+\left|\left\langle \partial_y(Bu)\partial_x u, \partial_y u\right\rangle_{H^{2, 0}}\right|\notag\\
&\lesssim \|Bu\|_{L^\infty}\|\partial_x\partial_y u\|_{H^{2, 0}}\|u\|_{H^{2, 1}}+\|\partial_x\partial_y u\|_{L^\infty}\|Bu\|_{H^{2, 0}}\|u\|_{H^{2, 1}}\notag\\
&\quad +\|\partial_y(Bu)\|_{L^\infty}\|\partial_x u\|_{H^{2, 0}}\|u\|_{H^{2, 1}}+\|\partial_x u\|_{L^\infty}\|\partial_y(Bu)\|_{H^{2, 0}}\|u\|_{H^{2, 1}}\notag\\
&\le CD(t)^{\frac12}E(t).
\end{align}
By the exactly same procedure, $J_4$ has the same bound.
\begin{align}\label{3.17}
|J_4|\le CD(t)^{\frac12}E(t).
\end{align}
Next, we establish the estimate of $J_5+J_6$.
\begin{align*}
&\left\langle \partial_y(ABh\partial_x\tilde{h}), \partial_y u\right\rangle_{H^{2, 0}}
-\left\langle \partial_y(ABh\partial_x u), \partial_y\tilde{h}\right\rangle_{H^{2, 0}}\\
=&\left(\left\langle \partial_y(ABh)\partial_x\tilde{h}, \partial_y u\right\rangle_{H^{2, 0}}
+\left\langle \partial_y(ABh)\partial_x u, \partial_y\tilde{h}\right\rangle_{H^{2, 0}}\right)\\
&\qquad-\left(\left\langle ABh\partial_x\partial_y\tilde{h}, \partial_y u\right\rangle_{H^{2, 0}}
+\left\langle ABh\partial_x\partial_y u, \partial_y\tilde{h}\right\rangle_{H^{2, 0}}\right)\\
=&\left(\left\langle \partial_y(ABh)\partial_x\tilde{h}, \partial_y u\right\rangle_{H^{2, 0}}
+\left\langle \partial_y(ABh)\partial_x u, \partial_y\tilde{h}\right\rangle_{H^{2, 0}}\right)\\
&\qquad+\left\langle \partial_x(ABh)\partial_y\tilde{h}, \partial_y u\right\rangle_{H^{2, 0}}
\end{align*}
where in the last line we use integration by parts. It follows from the same line as (\ref{3.16}), we deduce that
\begin{align}\label{3.18}
|J_5+J_6|\le CD(t)^{\frac12}E(t).
\end{align}
We continue to estimate $J_7$.
\begin{align*}
|J_7|&\le \left|\left\langle \partial_y((1-A)Bh)\partial_y\tilde{h}\partial_y u, \partial_y u\right\rangle_{H^{2, 0}}\right|+ \left|\left\langle (1-A)Bh\partial_y(\partial_y\tilde{h}\partial_y u), \partial_y u\right\rangle_{H^{2, 0}}\right|\notag\\
&\lesssim \left\|\partial_y((1-A)Bh)\right\|_{L^\infty}\|\partial_y\tilde{h}\partial_y u\|_{H^{2, 0}}\|u\|_{H^{2, 1}}+\|\partial_y\tilde{h}\partial_y u\|_{L^\infty}
\left\|\partial_y((1-A)Bh)\right\|_{H^{2, 0}}\|u\|_{H^{2, 1}}\notag\\
&\quad +\|(1-A)Bh\|_{L^\infty}\left\|\partial_y(\partial_y\tilde{h}\partial_y u)\right\|_{H^{2, 0}}\|u\|_{H^{2, 1}}\notag\\
&\quad+\left\|\partial_y(\partial_y\tilde{h}\partial_y u)\right\|_{L_\tau^\infty L_y^2}\left\|(1-A)Bh\right\|_{H_\tau^2 L_y^\infty}\|u\|_{H^{2, 1}}\\
&=: J_7^1+J_7^2+J_7^3+J_7^4.
\end{align*}
Firstly,
\begin{align*}
J_7^1&\le CE(t)^{\frac12}\cdot D(t)E(t)^{\frac12}\le CE(t)D(t).
\end{align*}
For the second and the third terms,
\begin{align*}
J_7^2&\le CD(t)E(t)^{\frac12}E(t)^{\frac12}\le CD(t)E(t),
\end{align*}
and
\begin{align*}
J_7^3\le CD(t)E(t)^{\frac12}.
\end{align*}
As for the last term
\begin{align*}
J_7^4\le CD(t)E(t)^{\frac12}\cdot E(t)^{\frac12}\le CD(t)E(t).
\end{align*}
As a result, by the a priori assumption (\ref{2.6}), it turns out
\begin{align}\label{3.19}
|J_7|\le CD(t)E(t)^{\frac12}.
\end{align}
A similar derivation of the above inequality yields that
\begin{align}\label{3.20}
|J_8|\le CD(t)E(t)^{\frac12}.
\end{align}
It is left to estimate $J_9$, by integration by parts
\begin{align*}
J_9&=\left\langle ABh^2\partial_y^2u, \partial_y^2 u\right\rangle_{H^{2, 0}}-\left\langle ABh^2\partial_y^2u, \partial_y u\right\rangle_{H^{2, 0}}|_{y=0}\\
&=\left\langle ABh^2\partial_\tau^2\partial_y^2u, \partial_\tau^2\partial_y^2 u\right\rangle+\left\langle \partial_\tau(ABh^2)\partial_\tau\partial_y^2u, \partial_\tau^2\partial_y^2 u\right\rangle\\
&\quad +\left\langle \partial_\tau^2(ABh^2)\partial_y^2u, \partial_\tau^2\partial_y^2 u\right\rangle-\left\langle ABh^2\partial_y^2u, \partial_y u\right\rangle_{H^{2, 0}}|_{y=0}\\
&=: J_9^1+J_9^2+J_9^3+J_9^4.
\end{align*}
Obviously
\begin{align*}
J_9^1\ge C^\prime_2\|\partial_y u\|_{H^{2, 1}}^2.
\end{align*}
For $J_9^2$ and $J_9^3$, by Gagliardo-Nirenberg inequality, we have
\begin{align*}
|J_9^2|+|J_9^3|&\lesssim \|\partial_\tau(ABh^2)\|_{L^\infty}\|\partial_y u\|_{H^{1, 1}}\|\partial_y u\|_{H^{2, 1}}+ \|\partial_\tau^2(ABh^2)\|_{L_\tau^2 L_y^\infty}\|\partial_y^2 u\|_{L_\tau^\infty L_y^2}\|\partial_y u\|_{H^{2, 1}}\\
&\le CD(t)E(t)^{\frac12}.
\end{align*}
Notice that the boundary term is the most involved, here we divide it into three parts.
\begin{align*}
&-\langle ABh^2\partial_y^2u, \partial_y u\rangle_{H^{2, 0}}|_{y=0}\\
=&-\langle \partial_\tau(ABh^2)\partial_\tau\partial_y^2u, \partial_\tau^2\partial_y u\rangle|_{y=0}-\langle \partial_\tau^2(ABh^2)\partial_y^2u, \partial_\tau^2\partial_y u\rangle_{H^{2, 0}}|_{y=0}-\langle ABh^2\partial_\tau^2\partial_y^2u, \partial_\tau^2\partial_y u\rangle_{H^{2, 0}}|_{y=0}\\
=:& \quad J_9^{4,1}+J_9^{4,2}+J_9^{4,3}.
\end{align*}
From the first equation in (\ref{2.2}) and the boundary condition (\ref{2.4}), we immediately get that
\[
\partial_y^2 u|_{y=0}=-\frac{1}{h}\partial_x\tilde{h}|_{y=0}.
\]
Substituting it into the first two terms which behave like the nonlinear terms, and by using Lemma \ref{Lem2.1}, we can estimate them directly.
\begin{align*}
&|J_9^{4,1}+J_9^{4,2}|\\
\lesssim& \left\|\partial_\tau(ABh^2)\right\|_{L^\infty}\left\|\partial_{\tau}\left(\frac{1}{h}\partial_x\tilde{h}\right)\right\|_{L_x^2 L_y^\infty}\left\|\partial_\tau^2\partial_y u\right\|_{L_x^2 L_y^\infty}+ \left\|\partial_\tau^2(ABh^2)\right\|_{L^\infty}\left\|\frac{1}{h}\partial_x\tilde{h}\right\|_{L_x^2 L_y^\infty}\left\|\partial_\tau^2\partial_y u\right\|_{L_x^2 L_y^\infty}\\
\le& CD(t)^{\frac12}E(t)+CD(t)E(t)^{\frac12}.
\end{align*}
By Gagliadro-Nirenberg Sobolev embedding inequality again, the last term can be estimated as follows.
\begin{align*}
|J_9^{4,3}|\lesssim \|\partial_\tau^2\partial_y^2 u\|_{L^2}^{\frac12}\|\partial_\tau^2\partial_y^3 u\|_{L^2}^{\frac12}\cdot\|\partial_\tau^2\partial_y u\|_{L^2}^{\frac12}\|\partial_\tau^2\partial_y^2 u\|_{L^2}^{\frac12}.
\end{align*}
The most complicated term comes from the second term in the right hand side of above inequality. To deal with it, we take $\partial_y$ to the first equation in (\ref{2.2}) and obtain
\begin{align*}
\partial_y^3 u=\partial_y\left(\frac{1}{Ah^2}\partial_\tau u+\frac{1}{Ah^2}u\partial_x u-\frac{1}{h}\partial_x\tilde{h}+\frac{1-A}{Ah}\partial_y\tilde{h}\partial_y u\right).
\end{align*}
Hence,
\begin{align*}
\|\partial_\tau^2\partial_y^3 u\|_{L^2}&\le\left\|\partial_y\left(\frac{1}{Ah^2}\partial_\tau u\right)\right\|_{H^{2, 0}}+\left\|\partial_y\left(\frac{1}{Ah^2}u\partial_x u\right)\right\|_{H^{2, 0}}\\
&\qquad+\left\|\partial_y\left(\frac{1}{h}\partial_x\tilde{h}\right)\right\|_{H^{2, 0}}+\left\|\partial_y\left(\frac{1-A}{Ah}\partial_y\tilde{h}\partial_y u\right)\right\|_{H^{2, 0}}\\
&=: j_1+j_2+j_3+j_4.
\end{align*}
All we need to do is to estimate $j_i (j=1,...,4)$. Firstly,
\begin{align*}
|j_1|&\lesssim \left\|\partial_y\left(\frac 1{Ah^2}\right)\partial_\tau u\right\|_{H^{2, 0}}+\left\|\frac 1{Ah^2}\partial_\tau\partial_y u\right\|_{H^{2, 0}}\\
&\lesssim \|\partial_\tau u\|_{L^\infty}\left\|\frac 1{Ah^2}\right\|_{H^{2, 1}}+\left\|\partial_y\left(\frac 1{Ah^2}\right)\right\|_{L^\infty}\|\partial_\tau u\|_{H^{2, 0}}\\
&\qquad\quad+\left\|\frac 1{Ah^2}\right\|_{L^\infty}\|\partial_\tau u\|_{H^{2, 1}}+\left\|\partial_\tau^2\left(\frac 1{Ah^2}\right)\right\|_{L^2}\|\partial_\tau\partial_y u\|_{L^\infty}\\
&\le CD(t)^{\frac12}E(t)^{\frac12}+CD(t)^{\frac12}.
\end{align*}
A similar derivation yields that $j_3$ has the same bound.
\begin{align*}
|j_3|\le CD(t)^{\frac12}E(t)^{\frac12}+CD(t)^{\frac12}.
\end{align*}
Furthermore, by Lemma \ref{Lem2.1} again, there holds
\begin{align*}
|j_2|&\lesssim \left\|\partial_y\left(\frac u{Ah^2}\right)\partial_x u\right\|_{H^{2, 0}}+\left\|\frac u{Ah^2}\partial_x\partial_y u\right\|_{H^{2, 0}}\\
&\lesssim \|\partial_x u\|_{L^\infty}\left\|\frac u{Ah^2}\right\|_{H^{2, 1}}+\left\|\partial_y\left(\frac u{Ah^2}\right)\right\|_{L^\infty}\|\partial_x u\|_{H^{2, 0}}\\
&\qquad\quad+\left\|\frac u{Ah^2}\right\|_{L^\infty}\|\partial_x u\|_{H^{2, 1}}+\left\|\frac u{Ah^2}\right\|_{H^{2, 0}}\|\partial_x\partial_y u\|_{L^\infty}\\
&\le CD(t)^{\frac12}E(t)^{\frac12}.
\end{align*}
By the same trick, $j_4$ can be bounded as follows.
\begin{align*}
|j_4|\le CD(t).
\end{align*}
Collecting these four estimates, it turns out
\begin{align*}
\|\partial_\tau^2\partial_y^3 u\|_{L^2}\le CD(t)^{\frac12}E(t)^{\frac12}+CD(t)+CD(t)^{\frac12}.
\end{align*}
From which we achieve that
\begin{align*}
|J_9^{4,3}|&\lesssim \|u\|_{H^{2, 2}}^{\frac12}\|u\|_{H^{2, 1}}^{\frac12}\|u\|_{H^{2, 2}}^{\frac12}\cdot\left(D(t)^{\frac14}+D(t)^{\frac14}E(t)^{\frac14}+D(t)^{\frac12}\right)\\
&\le \frac{C}{M^{\frac14}}D(t)+CD(t)E(t)^{\frac14}.
\end{align*}
Thus, we infer that
\begin{align}\label{3.21}
J_9\ge C_2\|\partial_y u\|_{H^{2, 1}}^2-CD(t)^{\frac12}E(t)-CD(t)E(t)^{\frac14}-\frac{C}{M^{\frac14}} D(t).
\end{align}
Finally, since there is no boundary term when applying integration by parts to $J_{10}$, the estimate of $J_{10}$ is much more concise.
\begin{align}\label{3.22}
J_{10}\ge C_2\|\partial_y\tilde{h}\|_{H^{2, 1}}^2-CD(t)E(t)^{\frac12}.
\end{align}
Consequently, summing up all the estimates (\ref{3.14})-(\ref{3.22}) together, we arrive at
\begin{align}\label{3.23}
\frac d{dt}\sum\limits_{|\alpha|=0}^{2}(\|\sqrt{B}\partial_\tau^\alpha\partial_y u\|_{L^2}^2&+\|\sqrt{A}\partial_\tau^\alpha\partial_y\tilde{h}\|_{L^2}^2)+C_2\|\partial_y(u, \tilde{h})\|_{H^{2, 1}}^2\notag\\
&\le CD(t)^{\frac 12}E(t)+CD(t)E(t)^{\frac14}+\frac{C}{M^{\frac14}} D(t).
\end{align}
Hence, we complete the proof of Proposition \ref{Prop3.3}.
\end{proof}

Now it is position to start the proof of Proposition \ref{Prop3.1}. Based on the Proposition \ref{Prop3.2} and \ref{Prop3.3}, we conclude that for any $\alpha\in\mathbb{N}^2, \beta\in\mathbb{N}$, if we take $M$ large enough, then we set up the following inequality.
\begin{align}\label{3.24}
\frac d{dt}\sum\limits_{\beta=0}^{1}\sum\limits_{|\alpha|=0}^{3-\beta}(\|\sqrt{B}\partial_\tau^\alpha\partial_y^\beta u\|_{L^2}^2&+\|\sqrt{A}\partial_\tau^\alpha\partial_y^\beta\tilde{h}\|_{L^2}^2)+C_0D(t) \notag\\
&\le CD(t)^{\frac 14}E(t)^{\frac54}+CD(t)^{\frac 12}E(t)+CD(t)E(t)^{\frac14}.
\end{align}
Then, the proof of Proposition \ref{Prop3.1} is complete.

\subsection{Lower bound estimate of lifespan of solutions}
This subsection is devoted to proving Theorem \ref{Thm1}. Recalling the initial condition (\ref{1.17}) and the coordinate transformation, we immediately get that
\begin{align}\label{3.25}
E(0)\le 2\varepsilon^2.
\end{align}
Furthermore, in view of the basic energy estimates achieved in Subsections 3.1 and 3.2, by H\"older's inequality and using the a priori assumption (\ref{2.6}), the smallness of $\varepsilon$, we have
\begin{align}\label{3.26}
\frac d{dt}\sum\limits_{\beta=0}^{1}\sum\limits_{|\alpha|=0}^{3-\beta}\left(\|\sqrt{B}\partial_\tau^\alpha\partial_y^\beta u\|_{L^2}^2+\|\sqrt{A}\partial_\tau^\alpha\partial_y^\beta\tilde{h}\|_{L^2}^2\right)+\frac {C_0}{2}D(t)\le CE(t)^{\frac53}.
\end{align}
Suppose that $(0, T_\varepsilon)$ is the maximum interval that (\ref{2.6}) holds, then as a consequence of Gronwall's inequality, we deduce that for any $t\in (0, T_\varepsilon)$ with $T_\varepsilon=\frac{\ln 2}{4C}\varepsilon^{-\frac43}$
\begin{align*}
E(t)\le E(0)\exp\left\{\int_0^{T_\varepsilon} CE(t)^{\frac23} dt\right\}\le 2\exp\{4C\varepsilon^{\frac43}T_\varepsilon\}\varepsilon^2= 4\varepsilon^2.
\end{align*}
Lastly, Theorem \ref{Thm1} follows by a bootstrap argument.

\section{Non-isentropic MHD boundary layer equations}
In this section, we consider the positive lower bound of lifespan of solutions to the initial boundary value problem of non-isentropic compressible MHD boundary layer equations (\ref{1.1})-(\ref{1.4}).

\subsection{Coordinate transformation}
To prove Theorem \ref{Thm2}, we also introduce the following coordinate transform
\[
\bar{t}=t,\quad \bar{x}=x,\quad \bar{y}=\psi(t, x, y),
\]
and the new variables as before
\[
(\hat{u}, \hat{\theta}, \hat{h})(\bar{t}, \bar{x}, \bar{y}):=(u, \theta, h)(t, x, y).
\]
Then (\ref{4.1}) becomes
\begin{align}\label{4.4}
\begin{cases}
\partial_{\bar{t}}u+u\partial_{\bar{x}}u-\frac{R\theta}{P-\frac12h^2}h\partial_{\bar{x}}h+\left(1-\frac{R\theta}{P-\frac12h^2}\right)h\partial_{\bar{y}}h\partial_{\bar{y}}u-\frac{R\theta}{P-\frac12h^2}h^2\partial_{\bar{y}}^2u=-\frac{R\theta}{P-\frac12h^2}P_{\bar{x}},\\
\partial_{\bar{t}}\theta+u\partial_{\bar{x}}\theta+\frac{a\theta}{Q}h^2\partial_{\bar{x}}u-\frac{a\theta}{Q}\frac{P+\frac12h^2}{P-\frac12h^2}h^2(\partial_{\bar{y}}u)^2\\
\qquad+\left(1-\frac{a\theta}{Q}\frac{P+\frac12h^2}{P-\frac12h^2}\right)h\partial_{\bar{y}}h\partial_{\bar{y}}\theta-\frac{a\theta}{Q}\frac{P+\frac12h^2}{P-\frac12h^2}h^2(\partial_{\bar{y}}h)^2\\
\qquad-\frac{a\theta}{Q}h^2\left[\frac{P+\frac12h^2}{P-\frac12h^2}\partial_{\bar{y}}^2\theta-\partial_{\bar{y}}(h\partial_{\bar{y}}h)\right]=\frac{a\theta}{Q}(P_{\bar{t}}+P_{\bar{x}}u),\\
\partial_{\bar{t}}h+u\partial_{\bar{x}}h-\frac{P-\frac12h^2}{Q}h\partial_{\bar{x}}u+\frac{a}{Q}h^3(\partial_{\bar{y}}u)^2+\frac{a}{Q}h^2\partial_{\bar{y}}h\partial_{\bar{y}}\theta+\frac{P+\frac12h^2}{Q}h(\partial_{\bar{y}}h)^2\\
\qquad-\frac{P-\frac12h^2}{Q}h\left[\partial_{\bar{y}}(h\partial_{\bar{y}}h)-\frac{ah^2}{P-\frac12h^2}\partial_{\bar{y}}^2\theta\right]=\frac{h}{Q(R+1)}(P_{\bar{t}}+P_{\bar{x}}u).
\end{cases}
\end{align}
Here, we also drop all ``hat" for convenience. As in \cite{HLY}, set
\[
q(\bar{t}, \bar{x}, \bar{y}):=\frac12h^2(\bar{t}, \bar{x}, \bar{y}),
\]
the system (\ref{4.4}) can be rewritten in the following form
\begin{align}\label{4.5}
\begin{cases}
\partial_tu+u\partial_xu-\frac{R\theta}{P-q}\partial_xq+\left(1-\frac{R\theta}{P-q}\right)\partial_yq\partial_yu-\frac{2R\theta q}{P-q}\partial_y^2u=-\frac{R\theta}{P-\frac12h^2}P_x\\
\partial_t\theta+u\partial_x\theta+\frac{2a\theta q}{Q}\partial_xu-\frac{2a\theta q}{Q}\frac{P+q}{P-q}(\partial_yu)^2+\left(1-\frac{a\theta}{Q}\frac{P+q}{P-q}\right)\partial_yq\partial_y\theta\\
\qquad\qquad-\frac{a\theta}{Q}\frac{P+q}{P-q}(\partial_yq)^2-\frac{2a\theta q}{Q}\left[\frac{P+q}{P-q}\partial_y^2\theta-\partial_y^2q\right]=\frac{a\theta}{Q}(P_t+P_xu),\\
\partial_tq+u\partial_xq-\frac{2(P-q)q}{Q}\partial_xu+\frac{4aq^2}{Q}(\partial_yu)^2+\frac{2aq}{Q}\partial_yq\partial_y\theta+\frac{P+q}{Q}(\partial_yq)^2\\
\qquad\qquad-\frac{2(P-q)q}{Q}\left[\partial_y^2q-\frac{2aq}{P-q}\partial_y^2\theta\right]=\frac{2q}{Q(R+1)}(P_t+P_xu),\\
\lim\limits_{y\to\infty}(u, \theta, q)= (0, \Theta, H^2/2)(t, x),\\
(u, \partial_yq)|_{y=0}={\bf{0}}, \qquad\theta|_{y=0}=\theta^*(t, x).
\end{cases}
\end{align}
Here, we also replace $(\bar{t}, \bar{x}, \bar{y})$ with $(t, x, y)$ without any confusion. To overcome the difficulty originated from the boundary term, we introduce a cut-off function $\chi(y)\in C^\infty(\mathbb{R}_+)$ such that $0\le\chi(y)\le 1$,
\begin{align*}
\chi(y)=
\begin{cases}
0, \quad y\in[0, 1],\\
1, \quad y\ge 2,
\end{cases}
\end{align*}
and denote
\[
\tilde{u}=u,\quad \tilde{\theta}=\theta-\chi(y)\Theta-(1-\chi(y))\theta^*, \quad \tilde{q}=q-H^2/2.
\]
Then $(\tilde{u}, \tilde{\theta}, \tilde{q})$ solves the following system of equations.
\begin{align}\label{4.6}
\begin{cases}
\partial_t\tilde{u}+u\partial_x\tilde{u}-\frac{R\theta}{P-q}\partial_x\tilde{q}+\left(1-\frac{R\theta}{P-q}\right)\partial_y\tilde{q}\partial_y\tilde{u}-\frac{2R\theta q}{P-q}\partial_y^2\tilde{u}=r_1,\\
\partial_t\tilde{\theta}+u\partial_x\tilde{\theta}+\frac{2a\theta q}{Q}\partial_x\tilde{u}-\frac{2a\theta q}{Q}\frac{P+q}{P-q}(\partial_y\tilde{u})^2+\left(1-\frac{a\theta}{Q}\frac{P+q}{P-q}\right)\partial_y\tilde{q}\partial_y\tilde{\theta}\\
\qquad\qquad-\frac{a\theta}{Q}\frac{P+q}{P-q}(\partial_y\tilde{q})^2-\frac{2a\theta q}{Q}\left[\frac{P+q}{P-q}\partial_y^2\tilde{\theta}-\partial_y^2\tilde{q}\right]=r_2,\\
\partial_t\tilde{q}+u\partial_x\tilde{q}-\frac{2(P-q)q}{Q}\partial_x\tilde{u}+\frac{4aq^2}{Q}(\partial_y\tilde{u})^2+\frac{2aq}{Q}\partial_y\tilde{q}\partial_y\tilde{\theta}+\frac{P+q}{Q}(\partial_y\tilde{q})^2\\
\qquad\qquad-\frac{2(P-q)q}{Q}\left[\partial_y^2\tilde{q}-\frac{2aq}{P-q}\partial_y^2\tilde{\theta}\right]=r_3.
\end{cases}
\end{align}
By the Bernoulli's law of the outflow, we have
\begin{align*}
\begin{cases}
r_1=-\frac{R\theta}{P-q}P_{x}+\frac{R\theta}{P-q}HH_x=0,\\
r_2=-\chi\Theta_t-(1-\chi)\theta^*_t-\tilde{u}\chi\Theta_x-\tilde{u}(1-\chi)\theta^*_x-\left(1-\frac{a\theta}{Q}\frac{P+q}{P-q}\right)\partial_y\tilde{q}(\chi'\Theta-\chi'\theta^*)\\
\qquad\qquad\qquad+\frac{2a\theta q}{Q}\frac{P+q}{P-q}(\chi''\Theta-\chi''\theta^*)+\frac{a\theta}{Q}(P_t+P_x\tilde{u}),\\
\quad=-(1-\chi)\theta^*_t-\tilde{u}\chi\Theta_x-\tilde{u}(1-\chi)\theta^*_x-\left(1-\frac{a\theta}{Q}\frac{P+q}{P-q}\right)\partial_y\tilde{q}(\chi'\Theta-\chi'\theta^*)\\
\qquad\qquad\qquad+\frac{2a\theta q}{Q}\frac{P+q}{P-q}(\chi''\Theta-\chi''\theta^*)+\frac{a\theta}{Q}\left((1-\chi)P_t+P_x\tilde{u}\right)-\frac{a\Theta P_t\chi\tilde{q}}{\left(P+\frac12(1-2a)H^2\right)Q},\\
r_3=-HH_t-\tilde{u}HH_x-\frac{2aq}{Q}\partial_y\tilde{q}(\chi'\Theta-\chi'\theta^*)-\frac{4aq^2}{Q}(\chi''\Theta-\chi''\theta^*)+\frac{2q}{Q(R+1)}(P_t+P_x\tilde{u})\\
\quad=-\tilde{u}HH_x-\frac{2aq}{Q}\partial_y\tilde{q}(\chi'\Theta-\chi'\theta^*)-\frac{4aq^2}{Q}(\chi''\Theta-\chi''\theta^*)+\frac{2\tilde{q}PP_t}{Q(R+1)(P+\frac12(1-2a)H^2)}+\frac{2q}{Q(R+1)}P_x\tilde{u}.
\end{cases}
\end{align*}
Moreover, by the assumption (\ref{4.2}), it holds
\begin{align}\label{4.7}
\|(r_2, r_3)\|_{H^{3, 0}}+\|(r_2, r_3)\|_{H^{2, 1}}\le Cf(t)+Cf(t)E(t)^{\frac12}+Cf(t)D(t)^{\frac12}.
\end{align}
Let ${\bf{v}}={\bf{v}}(t, x, y):=(\tilde{u}, \tilde{\theta}, \tilde{q})^T(t, x, y)$, the system (\ref{4.6}) can be rewritten in the following form
\begin{align}\label{4.8}
\partial_t{\bf{v}}+{\bf{A_0(v)}}\partial_x{\bf{v}}+f_0(\partial_y{\bf{v}})-{\bf{B_0(v}})\partial_y^2{\bf{v}}=g_0({\bf{v}}),
\end{align}
where
\begin{align*}
{\bf{A_0(v)}}=\left(
\begin{array}{ccc}
u & 0 & -\frac{R\theta}{a-q}\\
\frac{2a\theta q}{Q} & u & 0\\
-\frac{2(P-q)q}{Q} & 0 & u
\end{array}\right),\\
{\bf{B_0(v)}}=2q\left(
\begin{array}{ccc}
-\frac{R\theta}{P-q} & 0 & 0\\
0 & -\frac{a\theta}{Q}\frac{P+q}{P-q} & \frac{a\theta}{Q}\\
0 & \frac{2aq}{Q} & -\frac{P-q}{Q}
\end{array}\right),
\end{align*}
and
\begin{align*}
f_0(\partial_y{\bf{v}})=\left(
\begin{array}{c}
\left(1-\frac{R\theta}{P-q}\right)\partial_{y}\tilde{q}\partial_y\tilde{u}\\
-\frac{2a\theta q}{Q}\frac{P+q}{P-q}(\partial_{y}\tilde{u})^2+\left(1-\frac{a\theta}{Q}\frac{P+q}{P-q}\right)\partial_y\tilde{q}\partial_{y}\tilde{\theta}-\frac{a\theta}{Q}\frac{P+q}{P-q}(\partial_{y}\tilde{q})^2\\
\frac{4aq^2}{Q}(\partial_{y}\tilde{u})^2+\frac{2aq}{Q}h^2\partial_{y}\tilde{q}\partial_{y}\tilde{\theta}+\frac{P+q}{Q}h(\partial_{y}\tilde{q})^2
\end{array}
\right)=:\left(f_1, f_2, f_3\right)^T,
\end{align*}
\begin{align*}
g_0({\bf{v}})=\left(0, r_2, r_3\right)^T.
\end{align*}
We continue to introduce a positive symmetric matrix
\begin{align}\label{4.9}
S({\bf{v}}):=\left(
\begin{array}{ccc}
\frac{\theta(P-q)}{R} & 0 & 0\\
0 & \frac{P-q}{a} & \theta\\
0 & \theta & \frac{\theta^2}{2q}\frac{P+q}{P-q}
\end{array}
\right)
\end{align}
to symmetrize the system. A direct calculation yields that
\begin{align}\label{4.10}
A({\bf{v}}):=S({\bf{v}})A_0({\bf{v}})=\left(
\begin{array}{ccc}
\frac{\theta(P-q)}{R}u & 0 & -\theta^2\\
0 & \frac{P-q}{a}u & \theta u\\
-\theta^2 & \theta u & \frac{\theta^2}{2q}\frac{P+q}{P-q}u
\end{array}
\right)
\end{align}
is symmetric, and
\begin{align}\label{4.11}
B({\bf{v}}):=S({\bf{v}})B_0({\bf{v}})=\left(
\begin{array}{ccc}
2\theta^2q & 0 & 0\\
0 & 2\theta q & 0\\
0 & 0 & \theta^2
\end{array}
\right)
\end{align}
is positive definite. Finally, the system (\ref{1.1}) is converted to
\begin{align}\label{4.12}
\begin{cases}
S({\bf{v}})\partial_t{\bf{v}}+{\bf{A(v)}}\partial_x{\bf{v}}+f(\partial_y{\bf{v}})-{\bf{B(v}})\partial_y^2{\bf{v}}=g({\bf{v}}),\\
(\tilde{u}, \partial_y\tilde{q})|_{y=0}={\bf{0}},\qquad \tilde{\theta}|_{y=0}=0,\\
\lim\limits_{y\to\infty}{\bf{v}}(t, x, y)={\bf{0}},\\
{\bf{v}}|_{t=0}=(\tilde{u}_0, \tilde{\theta}_0, (\tilde{h}_0)^2/2)^T(x, y),
\end{cases}
\end{align}
where
\begin{align}\label{4.13}
f(\partial_y{\bf{v}})=\left(\frac{\theta(P-q)}{R}f_1, \frac{P-q}{a}f_2+\theta f_3, \theta f_2+\frac{\theta^2}{2q}\frac{P+q}{P-q}f_3\right)^T,
\end{align}
and
\begin{align}\label{4.14}
g({\bf{v}})=\left(0, \frac{P-q}{a}r_2+\theta r_3, \theta r_2+\frac{\theta^2}{2q}\frac{P+q}{P-q}r_3\right)^T.
\end{align}
To obtain the lifespan of the solution to the initial boundary value problem (\ref{4.12}), we define the following new energy functionals
\begin{align}\label{4.15}
&E(t)=M\sum\limits_{|\alpha|=0}^{3}\left(\|\sqrt{G_1}\partial_\tau^\alpha\tilde{u}\|_{L^2}^2+\|\sqrt{G_2}\partial_\tau^\alpha\tilde{\theta}\|_{L^2}^2+\|\sqrt{G_3}\partial_\tau^\alpha\tilde{q}\|_{L^2}^2\right)\notag\\
&\qquad\quad+\sum\limits_{|\alpha|=0}^{2}\left(\|\sqrt{G_1}\partial_\tau^\alpha\partial_y\tilde{u}\|_{L^2}^2+\|\sqrt{G_2}\partial_\tau^\alpha\partial_y\tilde{\theta}\|_{L^2}^2+\|\sqrt{G_3}\partial_\tau^\alpha\partial_y\tilde{q}\|_{L^2}^2\right),\notag\\
&D(t)=M\|\partial_{y}(\tilde{u}, \tilde{\theta}, \tilde{q})\|_{H^{3, 0}}^2+\|\partial_{y}(\tilde{u}, \tilde{\theta}, \tilde{q})\|_{H^{2, 1}},
\end{align}
where
\begin{align}\label{G}
G_1=\frac{\theta(P-q)}{R},\quad G_2=\frac{P-q}{a}, \quad G_3=\frac{\theta^2}{2q}\frac{P+q}{P-q},
\end{align}
and $M$ is a large constant which will be determined later. From now on, we always assume $(\tilde{u}, \tilde{\theta}, \tilde{h})$ is a smooth solution to (\ref{4.12}), and there exists a time $T$ such that the following a priori assumption
\begin{align}\label{4.16}
E(t)\le 8\varepsilon^2
\end{align}
holds for any $t\in[0, T]$. The proof of Theorem \ref{Thm2} relies on the following proposition.
\begin{proposition}\label{Prop4.3}
Under the a priori assumption (\ref{4.16}), for ant $t\in (0, T)$, there exist constants $C, C_0>0$, such that for any $\alpha\in\mathbb{N}^2, \beta\in\mathbb{N}$, it holds
\begin{align}\label{4.17}
\frac d{dt}&\sum\limits_{\beta=0}^{1}\sum\limits_{|\alpha|=0}^{3-\beta}\left(\|\sqrt{G_1}\partial_\tau^\alpha\partial_y^\beta\tilde{u}\|_{L^2}^2+\|\sqrt{G_2}\partial_\tau^\alpha\partial_y^\beta\tilde{\theta}\|_{L^2}^2+\|\sqrt{G_3}\partial_\tau^\alpha\partial_y^\beta\tilde{q}\|_{L^2}^2\right)+C_0D(t) \notag\\
&\le CD(t)^{\frac 14}E(t)^{\frac54}+CD(t)^{\frac 12}E(t)+CD(t)E(t)^{\frac14}+Cf(t)E(t)^{\frac12}+Cf(t)D(t)^{\frac12}E(t)^{\frac12}.
\end{align}
\end{proposition}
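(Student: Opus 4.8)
The plan is to prove \eqref{4.17} by the same two–tier scheme used for the isentropic Proposition~\ref{Prop3.1}: establish separately a tangential $H^{3,0}$ estimate (the analogue of Proposition~\ref{Prop3.2}) and a normal $H^{2,1}$ estimate (the analogue of Proposition~\ref{Prop3.3}), then combine them with $M$ chosen large. Throughout I would work with the symmetrized form \eqref{4.12}, exploiting its three algebraic features: $S(\mathbf{v})$ is symmetric positive definite, so that $\tfrac12\frac{d}{dt}\langle S\partial_\tau^\alpha\mathbf{v},\partial_\tau^\alpha\mathbf{v}\rangle$ is comparable to the weighted norms $\|\sqrt{G_i}\,\partial_\tau^\alpha(\cdot)\|_{L^2}^2$ entering $E(t)$, with $G_1,G_2,G_3$ from \eqref{G} the diagonal entries of $S$; $A(\mathbf{v})$ is symmetric, so the transport part loses no tangential derivative; and $B(\mathbf{v})=\mathrm{diag}(2\theta^2q,2\theta q,\theta^2)$ is diagonal and positive definite, which supplies the dissipation.

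For the tangential part I would take the $H^{3,0}$ inner product of \eqref{4.12} with $\mathbf{v}$. The term $S(\mathbf{v})\partial_t\mathbf{v}$ produces the time derivative of the weighted energy together with commutators $\langle\partial_\tau^\beta S\,\partial_\tau^{\alpha-\beta}\partial_t\mathbf{v},\partial_\tau^\alpha\mathbf{v}\rangle$; the term $A(\mathbf{v})\partial_x\mathbf{v}$, after using the symmetry of $A$ to integrate the top-order piece by parts, and the quadratic first-order term $f(\partial_y\mathbf{v})$ are all controlled exactly as $I_1$–$I_8$ in Proposition~\ref{Prop3.2}, through Lemma~\ref{Lem2.1}, Lemma~\ref{Lem2.3} and coefficient bounds analogous to Lemma~\ref{Lem2.4} (now for $S,A,B,G_i$), giving the $D^{1/4}E^{5/4}$, $D^{1/2}E$ and $DE^{1/2}$ contributions. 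Integrating $-B(\mathbf{v})\partial_y^2\mathbf{v}$ by parts yields the positive term $C_1\|\partial_y(\tilde{u},\tilde{\theta},\tilde{q})\|_{H^{3,0}}^2$. The genuinely new ingredient is the source $g(\mathbf{v})=(0,\tfrac{P-q}{a}r_2+\theta r_3,\ \theta r_2+\tfrac{\theta^2}{2q}\tfrac{P+q}{P-q}r_3)^T$: I would estimate $\langle g(\mathbf{v}),\mathbf{v}\rangle_{H^{3,0}}\lesssim\|g(\mathbf{v})\|_{H^{3,0}}\,\|\mathbf{v}\|_{H^{3,0}}$ and invoke \eqref{4.7}, which by Bernoulli's law (Remark~\ref{Rem}, forcing $r_1\equiv0$) gives $\|g(\mathbf{v})\|_{H^{3,0}}\lesssim f(t)+f(t)E(t)^{1/2}+f(t)D(t)^{1/2}$. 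This produces the two new terms $Cf(t)E(t)^{1/2}+Cf(t)D(t)^{1/2}E(t)^{1/2}$ in \eqref{4.17}, the intermediate $f(t)E(t)$ being absorbed into $f(t)E(t)^{1/2}$ by the smallness $E\le 8\varepsilon^2$ from \eqref{4.16}.

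For the normal part I would apply $\partial_y$ to \eqref{4.12}, pair with $\partial_y\mathbf{v}$ in $H^{2,0}$, and repeat the scheme, using \eqref{4.7} in the $H^{2,1}$ norm for the source. The main obstacle, exactly as in Proposition~\ref{Prop3.3}, is the boundary term generated when integrating the diffusion by parts, and here it is more delicate: both $\tilde{u}|_{y=0}=0$ and $\tilde{\theta}|_{y=0}=0$ are Dirichlet, so the $\tilde{u}$– and $\tilde{\theta}$–diffusions each leave a contribution $\langle B_{ii}\partial_y^2v_i,\partial_yv_i\rangle|_{y=0}$, while only the $\tilde{q}$–diffusion is boundary-free because $\partial_y\tilde{q}|_{y=0}=0$. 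To close these I would evaluate \eqref{4.6} at $y=0$: the $\tilde{u}$–equation gives $\partial_y^2\tilde{u}|_{y=0}=-\tfrac{1}{2q}\partial_x\tilde{q}|_{y=0}$, while the $\tilde{\theta}$– and $\tilde{q}$–equations at $y=0$ form a $2\times2$ linear system for $(\partial_y^2\tilde{\theta},\partial_y^2\tilde{q})|_{y=0}$, solvable by the positivity of the relevant submatrix, whose right-hand side involves only tangential derivatives, the quadratic trace $(\partial_y\tilde{u})^2|_{y=0}$, and $(r_2,r_3)|_{y=0}$. Substituting these, the resulting boundary integrals are of the same type as $J_9^{4,1},J_9^{4,2},J_9^{4,3}$; the most dangerous piece forces control of $\|\partial_\tau^2\partial_y^3(\tilde{u},\tilde{\theta},\tilde{q})\|_{L^2}$, obtained by differentiating the equations once more in $y$ and applying Lemma~\ref{Lem2.1}, and it produces the borderline term $\tfrac{C}{M^{1/4}}D(t)$.

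Finally, multiplying the tangential estimate by the large constant $M$ and adding the normal estimate, the dissipation $MC_1\|\partial_y(\tilde{u},\tilde{\theta},\tilde{q})\|_{H^{3,0}}^2+C_2\|\partial_y(\tilde{u},\tilde{\theta},\tilde{q})\|_{H^{2,1}}^2$ reconstitutes $C_0D(t)$, while the offending $\tfrac{C}{M^{1/4}}D(t)$ is absorbed for $M$ large, yielding \eqref{4.17}. I expect the boundary-term analysis of the normal estimate — in particular the coupled resolution of $\partial_y^2\tilde{\theta}$ and $\partial_y^2\tilde{q}$ at $y=0$ and the ensuing $\partial_\tau^2\partial_y^3$ bound — to be the principal difficulty, the remainder being a bookkeeping extension of the two–component isentropic argument to the three coupled unknowns together with the $f(t)$–source estimates.
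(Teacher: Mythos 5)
Your proposal follows essentially the same route as the paper: symmetrize via $S(\mathbf{v})$, run the two-tier $H^{3,0}$/$H^{2,1}$ energy scheme (the analogues of Propositions \ref{Prop4.4} and \ref{Prop4.5}, with the source $g(\mathbf{v})$ controlled through \eqref{4.7} and the $f(t)E(t)$ term absorbed by smallness), handle the diffusion boundary terms by evaluating the equations at $y=0$ as in $J_9^{4,1}$--$J_9^{4,3}$, and absorb the $\tfrac{C}{M^{1/4}}D(t)$ remainder by taking $M$ large. Your explicit resolution of the coupled traces $(\partial_y^2\tilde{\theta},\partial_y^2\tilde{q})|_{y=0}$ via the invertible $2\times 2$ block (determinant $\tfrac{4a\theta q^2}{Q}>0$) is precisely the detail the paper omits when it says $L_4^4$ "can be handled as $J_9^{4,3}$", so it is a faithful completion rather than a different argument.
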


\subsection{Tangential derivative estimates}
The target of this subsection is to establish $H^{3, 0}$ estimates of the solution $(\tilde{u}, \tilde{\theta}, \tilde{h})$ to the initial boundary value problem (\ref{4.12}).
\begin{proposition}\label{Prop4.4}
Under the a priori assumption (\ref{4.16}), for any $t\in (0, T)$, there exist constants $C, C_1>0$, such that
\begin{align}\label{4.18}
\frac d{dt}&\sum\limits_{|\alpha|=0}^{3}\left(\|\sqrt{G_1}\partial_\tau^\alpha\tilde{u}\|_{L^2}^2+\|\sqrt{G_2}\partial_\tau^\alpha\tilde{\theta}\|_{L^2}^2+\|\sqrt{G_3}\partial_\tau^\alpha\tilde{q}\|_{L^2}^2\right)+C_1\|\partial_y(\tilde{u}, \tilde{\theta}, \tilde{q})\|_{H^{3, 0}}^2\notag\\
&\le CD(t)^{\frac14}E(t)^{\frac54}+CD(t)^{\frac 12}E(t)+CD(t)E(t)^{\frac12}+Cf(t)E(t)^{\frac12}+Cf(t)D(t)^{\frac12}E(t)^{\frac12}.
\end{align}
\end{proposition}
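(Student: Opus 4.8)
The plan is to mimic the proof of Proposition \ref{Prop3.2} for the symmetrized system (\ref{4.12}), exploiting that $A({\bf v})$ in (\ref{4.10}) is symmetric and $B({\bf v})$ in (\ref{4.11}) is positive definite. For each multi-index $\alpha$ with $0\le|\alpha|\le 3$, I would apply $\partial_\tau^\alpha$ to (\ref{4.12}), take the $L^2$ inner product with $\partial_\tau^\alpha{\bf v}$, and sum over $\alpha$. This produces five groups of terms matching $I_1$--$I_{10}$ in Proposition \ref{Prop3.2}: the time-derivative term $\langle\partial_\tau^\alpha(S\partial_t{\bf v}),\partial_\tau^\alpha{\bf v}\rangle$, the convection--coupling term $\langle\partial_\tau^\alpha(A\partial_x{\bf v}),\partial_\tau^\alpha{\bf v}\rangle$, the quadratic first-order term $\langle\partial_\tau^\alpha f(\partial_y{\bf v}),\partial_\tau^\alpha{\bf v}\rangle$, the diffusion term $-\langle\partial_\tau^\alpha(B\partial_y^2{\bf v}),\partial_\tau^\alpha{\bf v}\rangle$, and the \emph{new} source term $\langle\partial_\tau^\alpha g({\bf v}),\partial_\tau^\alpha{\bf v}\rangle$ arising from (\ref{4.14}).

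For the hyperbolic part, the time-derivative term contributes $\tfrac12\frac{d}{dt}\langle S\partial_\tau^\alpha{\bf v},\partial_\tau^\alpha{\bf v}\rangle$; since the only off-diagonal entry of $S({\bf v})$ in (\ref{4.9}) is the bounded coefficient $\theta$ coupling $\tilde\theta$ and $\tilde q$, this quadratic form agrees with the diagonal functional in (\ref{4.18}) up to a cross term $\theta\langle\partial_\tau^\alpha\tilde\theta,\partial_\tau^\alpha\tilde q\rangle$ whose time derivative is controlled by $E(t)$ together with $\|\partial_t S\|_{L^\infty}\lesssim D(t)^{1/4}E(t)^{1/4}$. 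The crucial point, exactly as in the isentropic case, is that \emph{no} tangential derivative is lost in the convection--coupling term: because $A({\bf v})$ is symmetric and the domain is periodic in $x$, integration by parts gives $\langle A\partial_x\partial_\tau^\alpha{\bf v},\partial_\tau^\alpha{\bf v}\rangle=-\tfrac12\langle(\partial_x A)\partial_\tau^\alpha{\bf v},\partial_\tau^\alpha{\bf v}\rangle$, so the top-order term collapses to a derivative-of-coefficient term bounded by $\|\partial_x A\|_{L^\infty}E(t)\lesssim D(t)^{1/4}E(t)^{5/4}$, while the lower-order commutators fall into the same bound by Lemma \ref{Lem2.1}. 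This is precisely the three-field analogue of the cancellation between $I_5$ and $I_6$, and is the reason the symmetrizer $S$ was introduced.

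For the diffusion term I would integrate by parts in $y$: positive definiteness of $B({\bf v})$ yields the dissipation $C_1\|\partial_y(\tilde u,\tilde\theta,\tilde q)\|_{H^{3,0}}^2$, and all boundary contributions at $y=0$ vanish since $\tilde u|_{y=0}=\tilde\theta|_{y=0}=0$ kill the $\tilde u,\tilde\theta$ terms while $\partial_y\tilde q|_{y=0}=0$ kills the $\tilde q$ term; the $\partial_y B$ and commutator remainders are $O(D(t)^{1/2}E(t)+D(t)E(t)^{1/2})$ as for $I_9,I_{10}$. The quadratic first-order term $f(\partial_y{\bf v})$ in (\ref{4.13}) is bounded by $D(t)E(t)^{1/2}$ using $\|\partial_y(\tilde u,\tilde\theta,\tilde q)\|_{L^\infty}\lesssim D(t)^{1/2}$ from Lemma \ref{Lem2.3}, mirroring $I_7,I_8$. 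The genuinely new ingredient is the source: by the Bernoulli relations of Remark \ref{Rem} one has $r_1=0$ and the explicit forms of $r_2,r_3$, which by the outflow bound (\ref{4.2}) satisfy (\ref{4.7}); Cauchy--Schwarz then gives $\langle\partial_\tau^\alpha g({\bf v}),\partial_\tau^\alpha{\bf v}\rangle\lesssim f(t)E(t)^{1/2}+f(t)D(t)^{1/2}E(t)^{1/2}$, producing exactly the last two terms on the right of (\ref{4.18}).

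I expect the main obstacle to be twofold. First, one must verify that the top-order symmetric cancellation genuinely closes once all three fields are simultaneously coupled, and that the coefficient estimates (the analogue of Lemma \ref{Lem2.4} for $G_1,G_2,G_3$, the entries of $A,B$, and the off-diagonal $\theta$) remain valid and uniformly bounded under the a priori assumption (\ref{4.16}); here the positive lower bound $h\ge\tfrac12$ and boundedness of $\theta$ are essential to keep all denominators $P-q$, $Q$, $q$ away from zero. Second, one must arrange the source estimate so the $f(t)$-weighted terms are subcritical: with $f(t)=\varepsilon^{1+\sigma}g(t)$ and $g\in L^1(\mathbb{R}_+)$, these contributions are time-integrable and strictly smaller than the $\varepsilon^{5/3}$ scale that drives the $\varepsilon^{-4/3}$ lifespan, so after absorbing $D(t)$-factors by Young's inequality and choosing $M$ large they will not degrade the final bound.
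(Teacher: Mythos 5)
Your proposal follows essentially the same route as the paper's proof: taking the $H^{3,0}$ inner product of the symmetrized system (\ref{4.12}) with $\mathbf{v}$, using the symmetry of $A(\mathbf{v})$ and integration by parts in $x$ to avoid the loss of tangential derivative, the positive definiteness of $B(\mathbf{v})$ (with the boundary conditions killing the boundary terms) for the dissipation, Lemmas \ref{Lem2.1} and \ref{Lem2.3} for the commutators and the quadratic term $f(\partial_y\mathbf{v})$, and the source bound (\ref{4.7}) to produce the $f(t)$-weighted terms. If anything, you are slightly more careful than the paper in noting that the off-diagonal entry $\theta$ of $S(\mathbf{v})$ makes the quadratic form $\langle S(\mathbf{v})\partial_\tau^\alpha\mathbf{v},\partial_\tau^\alpha\mathbf{v}\rangle$ differ from the diagonal $G_i$-functional appearing in (\ref{4.18}) by a cross term that must be controlled.
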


\begin{proof}
Taking the $H^{3, 0}$ inner product on the first equation in (\ref{4.12}) with ${\bf{v}}$, we have
\begin{align*}
0=\langle S({\bf{v}})\partial_t{\bf{v}}&,  {\bf{v}}\rangle_{H^{3, 0}}+\langle A({\bf{v}})\partial_x{\bf{v}}, {\bf{v}}\rangle_{H^{3, 0}}
+\left\langle f({\bf{v}}, \partial_y{\bf{v}}), {\bf{v}}\right\rangle_{H^{3, 0}}\\
&-\left\langle B({\bf{v}})\partial_y^2{\bf{v}}, {\bf{v}}\right\rangle_{H^{3, 0}}-\left\langle g({\bf{v}}), {\bf{v}}\right\rangle_{H^{3, 0}}=: \sum\limits_{i=1}^{5}K_i.
\end{align*}
Next, we will estimate $K_i (i=1,...,5)$ term by term.
\begin{align}\label{4.19}
K_1&=\sum\limits_{|\alpha|=0}^{3}\left\langle\partial_\tau^\alpha(S({\bf{v}})\partial_t{\bf{v}}), \partial_\tau^\alpha {\bf{v}}\right\rangle=\sum\limits_{|\alpha|=0}^{3}\left\langle S({\bf{v}})\partial_\tau^\alpha\partial_t {\bf{v}}, \partial_\tau^\alpha {\bf{v}}\right\rangle
+\sum\limits_{|\alpha|=1}^{3}\sum\limits_{|\beta|=1}^{|\alpha|}\left\langle \partial_\tau^\beta S({\bf{v}})\partial_\tau^{\alpha-\beta}\partial_t {\bf{v}}, \partial_\tau^\alpha {\bf{v}}\right\rangle\notag\\
&=\frac 12\frac d{dt}\sum\limits_{|\alpha|=0}^{3}\left\|\sqrt{S({\bf{v}})}\partial_\tau^\alpha {\bf{v}}\right\|_{L^2}^2-\frac 12\sum\limits_{|\alpha|=0}^{3}\int_{\Omega} S({\bf{v}})_t(\partial_\tau^\alpha {\bf{v}})^2+\sum\limits_{|\alpha|=1}^{3}\sum\limits_{|\beta|=1}^{|\alpha|}\left\langle \partial_\tau^\beta S({\bf{v}})\partial_\tau^{\alpha-\beta}\partial_t {\bf{v}},
\partial_\tau^\alpha {\bf{v}}\right\rangle\notag\\
&\ge \frac 12\frac d{dt}\sum\limits_{|\alpha|=0}^{3}\left\|\sqrt{S({\bf{v}})}\partial_\tau^\alpha {\bf{v}}\right\|_{L^2}^2-C\|\partial_\tau S({\bf{v}})\|_{L^\infty}\|{\bf{v}}\|_{H^{3, 0}}^2-C\|\partial_\tau {\bf{v}}\|_{L_x^\infty L_y^2}\|\partial_\tau S({\bf{v}})\|_{H_x^2L_y^\infty}\|{\bf{v}}\|_{H^{3, 0}}\notag\\
&\ge \frac 12\frac d{dt}\sum\limits_{|\alpha|=0}^{3}\left\|\sqrt{S({\bf{v}})}\partial_\tau^\alpha {\bf{v}}\right\|_{L^2}^2-CD(t)^{\frac14}E(t)^{\frac54}-Cf(t)E(t),
\end{align}
where in the last inequality, (\ref{2.7}) is used. Next, we estimate $K_2$.
\begin{align}\label{4.20}
|K_2|&\le\sum\limits_{|\alpha|=0}^{3}\left|\left\langle\partial_\tau^\alpha(A({\bf{v}})\partial_x{\bf{v}}), \partial_\tau^\alpha {\bf{v}}\right\rangle\right |\notag\\
&\le\sum\limits_{|\alpha|=0}^{3}\left|\langle A({\bf{v}})\partial_x\partial_\tau^\alpha {\bf{v}}, \partial_\tau^\alpha {\bf{v}}\rangle\right|
+\sum\limits_{|\alpha|=0}^{3}\sum\limits_{|\beta|=1}^{|\alpha|}\left|\left\langle\partial_\tau^\beta A({\bf{v}})\partial_\tau^{\alpha+1-\beta} {\bf{v}}, \partial_\tau^\alpha {\bf{v}}\right\rangle_{L^2}\right|\notag\\
&=\sum\limits_{|\alpha|=0}^{3}\left|-\frac 12\int_{\Omega} \partial_x A({\bf{v}})(\partial_\tau^\alpha {\bf{v}})^2\right|
+\sum\limits_{|\alpha|=0}^{3}\sum\limits_{|\beta|=1}^{|\alpha|}\left|\left\langle\partial_\tau^\beta A({\bf{v}})\partial_\tau^{\alpha+1-\beta} {\bf{v}}, \partial_\tau^\alpha {\bf{v}}\right\rangle_{L^2}\right|\notag\\
&\lesssim\|\partial_\tau A({\bf{v}})\|_{L^\infty}\|{\bf{v}}\|_{H^{3, 0}}^2+\|\partial_\tau {\bf{v}}\|_{L_x^\infty L_y^2}\|\partial_\tau A({\bf{v}})\|_{H_x^2L_y^\infty}\|{\bf{v}}\|_{H^{3, 0}}\notag\\
&\le CD(t)^{\frac14}E(t)^{\frac54}-Cf(t)E(t).
\end{align}
We move to estimate $K_3$. For the first component of $f({\bf{v}}, \partial_y{\bf{v}})$, it can be estimated directly.
\begin{align*}
&\left|\left\langle \frac{(P-q-R\theta)\theta}{R}\partial_y\tilde{q}\partial_y\tilde{u},\tilde{u}\right\rangle_{H^{3, 0}}\right|\\
&\lesssim\left\|(P-q-R\theta)\theta)\right\|_{L^\infty}\|\partial_y\tilde{q}\partial_y\tilde{u}\|_{H^{3, 0}}\|\tilde{u}\|_{H^{3, 0}}+\|\partial_y\tilde{q}\partial_y\tilde{u}\|_{L^\infty}\|(P-q-R\theta)\theta\|_{H^{3, 0}}\|\tilde{u}\|_{H^{3, 0}}.
\end{align*}
By the same argument as (\ref{3.8}), we arrive at
\begin{align*}
\left|\left\langle \frac{(P-q-R\theta)\theta}{R}\partial_y\tilde{q}\partial_y\tilde{u},\tilde{u}\right\rangle_{H^{3, 0}}\right|\le CD(t)E(t)+CD(t)E(t)^{\frac12}\le CD(t)E(t)^{\frac12}.
\end{align*}
Notice that the other components of $f({\bf{v}}, \partial_y{\bf{v}})$ can be bounded as above. Thus, we obtain the estimate of $K_3$,
\begin{align}\label{4.21}
|K_3|\le CD(t)E(t)^{\frac12}.
\end{align}
As for $K_4$, we separate it into two parts
\begin{align*}
K_4&=-\sum\limits_{|\alpha|=0}^{3}\left\langle B({\bf{v}})\partial_y^2\partial_\tau^\alpha {\bf{v}}, \partial_\tau^\alpha {\bf{v}}\right\rangle-\sum\limits_{|\alpha|=0}^{3}\sum\limits_{|\beta|=1}^{|\alpha|}\left\langle\partial_\tau^\beta B({\bf{v}})\partial_\tau^{\alpha-\beta}\partial_y^2 {\bf{v}}, \partial_\tau^\alpha {\bf{v}}\right\rangle\\
&=:K_4^1+K_4^2.
\end{align*}
By integration by parts, the first one can be estimated as follows.
\begin{align*}
K_4^1&=\sum\limits_{|\alpha|=0}^{3}\int_{\Omega}B({\bf{v}})(\partial_\tau^\alpha\partial_y {\bf{v}})^2+\sum\limits_{|\alpha|=0}^{3}\left\langle\partial_y B({\bf{v}})\partial_\tau^\alpha\partial_y {\bf{v}}, \partial_\tau^\alpha {\bf{v}}\right\rangle\\
&\ge C\|\partial_y {\bf{v}}\|_{H^{3, 0}}^2-\|\partial_y B({\bf{v}})\|_{L^\infty}\|\partial_y {\bf{v}}\|_{H^{3, 0}}\|{\bf{v}}\|_{H^{3, 0}}\\
&\ge C\|\partial_y {\bf{v}}\|_{H^{3, 0}}^2-CD(t)E(t)^{\frac12}-Cf(t)D(t)^{\frac12}E(t)^{\frac12}.
\end{align*}
And for the second part, by Lemma \ref{Lem2.1}, we have
\begin{align*}
|K_4^2|&\lesssim \|\partial_\tau B({\bf{v}})\|_{L^\infty}\|\partial_y^2 {\bf{v}}\|_{H^{2, 0}}\|{\bf{v}}\|_{H^{3, 0}}+\|\partial_y^2 {\bf{v}}\|_{L_x^\infty L_y^2}\|\partial_\tau^3 B({\bf{v}})\|_{L_x^2L_y^\infty}\|{\bf{v}}\|_{H^{3, 0}}\\
&\le CD(t)^{\frac 12}E(t)+CD(t)E(t)^{\frac 12}+Cf(t)D(t)^{\frac12}E(t)^{\frac12}.
\end{align*}
Consequently,
\begin{align}\label{4.22}
K_4\ge C_1\|\partial_y {\bf{v}}\|_{H^{3, 0}}^2- CD(t)^{\frac 12}E(t)-CD(t)E(t)^{\frac12}-Cf(t)D(t)^{\frac12}E(t)^{\frac12}.
\end{align}
Finally, by (\ref{4.7}), it is direct to estimate the last term as follows.
\begin{align}\label{4.23}
|K_5|\le Cf(t)E(t)^{\frac12}+Cf(t)E(t)+Cf(t)D(t)^{\frac12}E(t)^{\frac12}.
\end{align}
Finally, collecting all the estimates (\ref{4.19})-(\ref{4.23}) together, we conclude that
\begin{align}\label{4.24}
\frac d{dt}&\sum\limits_{|\alpha|=0}^{3}\left(\|\sqrt{G_1}\partial_\tau^\alpha\tilde{u}\|_{L^2}^2+\|\sqrt{G_2}\partial_\tau^\alpha\tilde{\theta}\|_{L^2}^2+\|\sqrt{G_3}\partial_\tau^\alpha\tilde{q}\|_{L^2}^2\right)+C_1\|\partial_y(\tilde{u}, \tilde{\theta}, \tilde{q})\|_{H^{3, 0}}^2\notag\\
&\le CD(t)^{\frac14}E(t)^{\frac54}+CD(t)^{\frac 12}E(t)+CD(t)E(t)^{\frac12}+Cf(t)E(t)^{\frac12}+Cf(t)D(t)^{\frac12}E(t)^{\frac12}.
\end{align}
And we finish the proof of Proposition \ref{Prop4.4}.
\end{proof}

\subsection{Normal derivative estimates}
The aim of this subsection is to establish $H^{2, 1}$ estimates of the solution $(\tilde{u}, \tilde{\theta}, \tilde{h})$ to the initial boundary value problem (\ref{4.12}).
\begin{proposition}\label{Prop4.5}
Under the a priori assumption (\ref{4.16}), for any $t\in (0, T)$, there exist constants $C, C_2>0$, such that for suitably large $M$, it holds
\begin{align}\label{4.25}
&\frac d{dt}\sum\limits_{|\alpha|=0}^{2}\left(\|\sqrt{G_1}\partial_\tau^\alpha\partial_y\tilde{u}\|_{L^2}^2+\|\sqrt{G_2}\partial_\tau^\alpha\partial_y\tilde{\theta}\|_{L^2}^2+\|\sqrt{G_3}\partial_\tau^\alpha\partial_y\tilde{q}\|_{L^2}^2\right)+C_1\|\partial_y(\tilde{u}, \tilde{\theta}, \tilde{q})\|_{H^{2, 1}}^2\notag\\
&\le CD(t)E(t)^{\frac14}+CD(t)^{\frac 12}E(t)+\frac{C}{M^{\frac14}}D(t)+Cf(t)E(t)^{\frac12}+Cf(t)D(t)^{\frac12}E(t)^{\frac12}.
\end{align}
\end{proposition}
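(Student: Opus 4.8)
The plan is to repeat the normal-derivative scheme of Proposition \ref{Prop3.3} on the symmetrized system \eqref{4.12}. First I apply $\partial_y$ to the first line of \eqref{4.12} and take the $H^{2,0}$ inner product of the result with $\partial_y\mathbf{v}=(\partial_y\tilde u,\partial_y\tilde\theta,\partial_y\tilde q)^{T}$, organizing the identity as $\sum_{i=1}^{5}\widetilde J_i$, where $\widetilde J_1,\dots,\widetilde J_5$ come respectively from $S(\mathbf{v})\partial_t\mathbf{v}$, $A(\mathbf{v})\partial_x\mathbf{v}$, $f(\partial_y\mathbf{v})$, $-B(\mathbf{v})\partial_y^2\mathbf{v}$ and $-g(\mathbf{v})$. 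As in \eqref{3.14}, the term $\widetilde J_1$ splits into $\tfrac12\tfrac{d}{dt}\sum_{|\alpha|=0}^{2}\|\sqrt{S(\mathbf{v})}\partial_\tau^\alpha\partial_y\mathbf{v}\|_{L^2}^2$, whose diagonal weights are precisely $G_1,G_2,G_3$ of \eqref{G} and which is the quantity differentiated on the left of \eqref{4.25}, plus commutators of the form $\langle\partial_\tau^\beta S(\mathbf{v})\,\partial_\tau^{\alpha-\beta}\partial_t\partial_y\mathbf{v},\partial_\tau^\alpha\partial_y\mathbf{v}\rangle$ and $\langle\partial_y S(\mathbf{v})\,\partial_t\mathbf{v},\partial_y\mathbf{v}\rangle_{H^{2,0}}$. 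Using the coefficient bounds for $S(\mathbf{v})$ (the analogue of Lemma \ref{Lem2.4}, which also applies to $G_1,G_2,G_3$) together with Lemma \ref{Lem2.3}, these commutators contribute at worst $CD(t)^{1/2}E(t)+Cf(t)D(t)^{1/2}E(t)^{1/2}$.

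The transport term $\widetilde J_2$ and the first-order nonlinearity $\widetilde J_3$ carry only one normal derivative on each factor, so they are handled exactly as $J_3$--$J_8$ in Proposition \ref{Prop3.3}: Lemma \ref{Lem2.1} distributes the tangential derivatives, Lemma \ref{Lem2.3} turns the $L^\infty_y$ norms of $\partial_y(\tilde u,\tilde\theta,\tilde q)$ into factors of $D(t)^{1/2}$, and one obtains contributions of type $CD(t)^{1/2}E(t)+CD(t)E(t)^{1/2}$; since $E$ is small, the latter is dominated by $CD(t)E(t)^{1/4}$. The source term $\widetilde J_5$ is estimated directly through \eqref{4.7}, giving $Cf(t)E(t)^{1/2}+Cf(t)D(t)^{1/2}E(t)^{1/2}$.

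The core of the argument is the dissipative term $\widetilde J_4=-\langle\partial_y(B(\mathbf{v})\partial_y^2\mathbf{v}),\partial_y\mathbf{v}\rangle_{H^{2,0}}$. Since $B(\mathbf{v})$ in \eqref{4.11} is diagonal and bounded below (Lemma \ref{Lem2.2}), integrating by parts in $y$ yields the good term $C_2\|\partial_y(\tilde u,\tilde\theta,\tilde q)\|_{H^{2,1}}^2$, a commutator of already-treated type, and the boundary contribution $\langle B(\mathbf{v})\partial_y^2\mathbf{v},\partial_y\mathbf{v}\rangle_{H^{2,0}}\big|_{y=0}$. Here the boundary conditions in \eqref{4.12} are decisive: because $\partial_y\tilde q|_{y=0}=0$, the $\tilde q$-row contributes no boundary term (just as for $J_{10}$), while the $\tilde u$- and $\tilde\theta$-rows do. Evaluating the first and second lines of \eqref{4.12} at $y=0$ and using $\tilde u|_{y=0}=\tilde\theta|_{y=0}=\partial_y\tilde q|_{y=0}=0$, I solve for $\partial_y^2\tilde u|_{y=0}$ and $\partial_y^2\tilde\theta|_{y=0}$ in terms of tangential derivatives, the quadratic trace $(\partial_y\tilde u)^2|_{y=0}$, and $g|_{y=0}$ (controlled by $f(t)$). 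Substituting these relations, the lower-order pieces of the boundary term become nonlinear expressions estimated by Lemmas \ref{Lem2.1} and \ref{Lem2.3}, producing $CD(t)^{1/2}E(t)+CD(t)E(t)^{1/2}$ and the $f(t)$-terms.

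I expect the main obstacle to be the top-order part of this boundary term, the analogue of $J_9^{4,3}$, in which two tangential and two normal derivatives land on $\tilde u$ (and on $\tilde\theta$) at $y=0$. I control its trace by Lemma \ref{Lem2.3}, which reduces it to $\|\partial_\tau^2\partial_y^2(\tilde u,\tilde\theta)\|_{L^2}\,\|\partial_\tau^2\partial_y(\tilde u,\tilde\theta)\|_{L^2}^{1/2}\,\|\partial_\tau^2\partial_y^3(\tilde u,\tilde\theta)\|_{L^2}^{1/2}$, the last factor being the dangerous one. To bound $\|\partial_\tau^2\partial_y^3(\tilde u,\tilde\theta)\|_{L^2}$, I apply $\partial_y$ to the first two lines of \eqref{4.12} and solve algebraically for $\partial_y^3\tilde u$ and $\partial_y^3\tilde\theta$ (legitimate since the diagonal entries of $B(\mathbf{v})$ stay away from zero), then estimate the resulting terms one by one as for $j_1$--$j_4$, arriving at $\|\partial_\tau^2\partial_y^3(\tilde u,\tilde\theta)\|_{L^2}\lesssim D(t)^{1/2}+D(t)^{1/2}E(t)^{1/2}+D(t)$. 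The interpolation then leaves a residual whose worst piece is $\frac{C}{M^{1/4}}D(t)$---absorbable by taking $M$ large---together with $CD(t)E(t)^{1/4}$; here the factor $M^{-1/4}$ arises because the highest tangential-normal derivative is measured by the $M$-weighted $H^{3,0}$ part of $D(t)$. Collecting $\widetilde J_1$--$\widetilde J_5$, absorbing the good term into the left, and noting that every $D(t)E(t)^{1/2}$ contribution is dominated by $D(t)E(t)^{1/4}$ under \eqref{4.16}, yields \eqref{4.25}.
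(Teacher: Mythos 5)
Your proposal is correct and follows essentially the same route as the paper: the same decomposition into five terms (the paper's $L_1,\dots,L_5$), the same symmetrizer-weighted energy identity giving the $G_1, G_2, G_3$ norms, the same treatment of the source term via \eqref{4.7}, and the same handling of the boundary term — noting that the $\tilde q$-row vanishes by $\partial_y\tilde q|_{y=0}=0$, substituting the equations evaluated at $y=0$ for the lower-order pieces, and controlling the top-order trace by Gagliardo--Nirenberg interpolation plus an equation-based bound on $\|\partial_\tau^2\partial_y^3(\tilde u,\tilde\theta)\|_{L^2}$, which produces the $\frac{C}{M^{1/4}}D(t)$ term exactly as in $J_9^{4,3}$. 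In fact you supply more detail on the boundary substitution than the paper, which simply refers back to the isentropic computation.
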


\begin{proof}
Applying $\partial_y$ on the first equation of (\ref{4.12}) and take the $H^{2, 0}$ inner product on the resulting equation with $\partial_y{\bf{v}}$, we have
\begin{align*}
0=\langle \partial_y\left(S({\bf{v}})\partial_t{\bf{v}}\right)&, \partial_y{\bf{v}}\rangle_{H^{2, 0}}+\left\langle \partial_y\left(A({\bf{v}})\partial_x{\bf{v}}\right), \partial_y{\bf{v}}\right\rangle_{H^{2, 0}}
+\left\langle \partial_yf({\bf{v}}, \partial_y{\bf{v}}), \partial_y{\bf{v}}\right\rangle_{H^{2, 0}}\\
&-\left\langle \partial_y\left(B({\bf{v}})\partial_y^2{\bf{v}}\right), \partial_y{\bf{v}}\right\rangle_{H^{2, 0}}-\left\langle \partial_yg({\bf{v}}), \partial_y{\bf{v}}\right\rangle_{H^{2, 0}}=: \sum\limits_{i=1}^{5}L_i.
\end{align*}
Now we handle $L_i (i=1,...,5)$ term by term. We divide $L_1$ into two parts.
\begin{align*}
L_1=\langle S({\bf{v}})\partial_t\partial_y{\bf{v}}, \partial_y{\bf{v}}\rangle_{H^{2, 0}}+\langle \partial_yS({\bf{v}})\partial_t{\bf{v}}, \partial_y{\bf{v}}\rangle_{H^{2, 0}}=: L_1^1+L_1^2.
\end{align*}
The first term $L_1^1$ can be treated like $K_1$.
\begin{align*}
L_1^1&=\sum\limits_{|\alpha|=0}^{2}\left\langle S({\bf{v}})\partial_\tau^\alpha\partial_t\partial_y {\bf{v}}, \partial_\tau^\alpha\partial_y {\bf{v}}\right\rangle
+\sum\limits_{|\alpha|=1}^{2}\sum\limits_{|\beta|=1}^{|\alpha|}\left\langle \partial_\tau^\beta S({\bf{v}})\partial_\tau^{\alpha-\beta}\partial_t\partial_y {\bf{v}}, \partial_\tau^\alpha\partial_y {\bf{v}}\right\rangle\notag\\
&=\frac 12\frac d{dt}\sum\limits_{|\alpha|=0}^{2}\left\|\sqrt{S({\bf{v}})}\partial_\tau^\alpha\partial_y {\bf{v}}\right\|_{L^2}^2-\frac 12\sum\limits_{|\alpha|=0}^{2}\int_{\Omega} S({\bf{v}})_t(\partial_\tau^\alpha\partial_y {\bf{v}})^2+\sum\limits_{|\alpha|=1}^{2}\sum\limits_{|\beta|=1}^{|\alpha|}\left\langle \partial_\tau^\beta S({\bf{v}})\partial_\tau^{\alpha-\beta}\partial_t\partial_y {\bf{v}},
\partial_\tau^\alpha\partial_y {\bf{v}}\right\rangle\notag\\
&\ge \frac 12\frac d{dt}\sum\limits_{|\alpha|=0}^{2}\left\|\sqrt{S({\bf{v}})}\partial_\tau^\alpha\partial_y {\bf{v}}\right\|_{L^2}^2-C\|\partial_\tau S({\bf{v}})\|_{L^\infty}\|{\bf{v}}\|_{H^{2, 1}}^2-C\|\partial_\tau\partial_y {\bf{v}}\|_{L_x^\infty L_y^2}\|\partial_\tau S({\bf{v}})\|_{H_x^1 L_y^\infty}\|{\bf{v}}\|_{H^{2, 1}}\notag\\
&\ge \frac 12\frac d{dt}\sum\limits_{|\alpha|=0}^{2}\left\|\sqrt{S({\bf{v}})}\partial_\tau^\alpha\partial_y {\bf{v}}\right\|_{L^2}^2-CD(t)^{\frac12}E(t)-Cf(t)E(t).
\end{align*}
The second part can be estimated straightly.
\begin{align*}
|L_1^2|&\lesssim \|\partial_yS({\bf{v}})\|_{L^\infty}\|\partial_t{\bf{v}}\|_{H^{2, 0}}\|{\bf{v}}\|_{H^{2, 1}}+\|\partial_t{\bf{v}}\|_{L_x^\infty L_y^2}\|\partial_yS({\bf{v}})\|_{H_x^2 L_y^\infty}\|{\bf{v}}\|_{H^{2, 1}}\\
&\le CD(t)^{\frac12}E(t)+Cf(t)E(t).
\end{align*}
As a consequence of the above two inequalities, it yields that
\begin{align}\label{4.26}
L_1\ge \frac 12\frac d{dt}\sum\limits_{|\alpha|=0}^{2}\left\|\sqrt{S({\bf{v}})}\partial_\tau^\alpha\partial_y {\bf{v}}\right\|_{L^2}^2-CD(t)^{\frac12}E(t)-Cf(t)E(t).
\end{align}
Next we estimate $L_2$. With the same procedure as (\ref{3.16}), we derive from integration by parts that
\begin{align}\label{4.27}
|L_2|&\le \left|\left\langle \partial_yA({\bf{v}})\partial_x {\bf{v}}, \partial_y {\bf{v}}\right\rangle_{H^{2, 0}}\right|+\sum\limits_{|\alpha|=0}^{2}\left|\langle A({\bf{v}})\partial_x\partial_\tau^\alpha\partial_y {\bf{v}}, \partial_\tau^\alpha\partial_y {\bf{v}}\rangle\right|\notag\\
&\qquad+\sum\limits_{|\alpha|=0}^{2}\sum\limits_{|\beta|=1}^{|\alpha|}\left|\left\langle\partial_\tau^\beta A({\bf{v}})\partial_\tau^{\alpha+1-\beta} \partial_y{\bf{v}}, \partial_\tau^\alpha\partial_y {\bf{v}}\right\rangle_{L^2}\right|\notag\\
&=\left|\left\langle \partial_yA({\bf{v}})\partial_x {\bf{v}}, \partial_y {\bf{v}}\right\rangle_{H^{2, 0}}\right|+\sum\limits_{|\alpha|=0}^{2}\left|-\frac 12\int_{\Omega} \partial_x A({\bf{v}})(\partial_\tau^\alpha\partial_y {\bf{v}})^2\right|\notag\\
&\qquad+\sum\limits_{|\alpha|=0}^{2}\sum\limits_{|\beta|=1}^{|\alpha|}\left|\left\langle\partial_\tau^\beta A({\bf{v}})\partial_\tau^{\alpha+1-\beta} \partial_y{\bf{v}}, \partial_\tau^\alpha\partial_y {\bf{v}}\right\rangle_{L^2}\right|\notag\\
&\lesssim \|\partial_\tau A({\bf{v}})\|_{L^\infty}\|{\bf{v}}\|_{H^{3, 0}}\|{\bf{v}}\|_{H^{2, 1}}+\|\partial_x {\bf{v}}\|_{L^\infty}\|\partial_y A({\bf{v}})\|_{H^{2, 0}}\|{\bf{v}}\|_{H^{2, 1}}\notag\\
&\qquad+\|\partial_y A({\bf{v}})\|_{L^\infty}\|{\bf{v}}\|_{H^{2, 1}}^2+\|\partial_x\partial_y {\bf{v}}\|_{L_x^\infty L_y^2}\|\partial_\tau A({\bf{v}})\|_{H_x^1 L_y^\infty}\|{\bf{v}}\|_{H^{2, 1}}\notag\\
&\le CD(t)E(t)^{\frac12}+Cf(t)E(t).
\end{align}
For $L_3$, we only need to consider the first component of $\partial_yf({\bf{v}}, \partial_y{\bf{v}})$. Indeed, this term can be controlled as the same same arguments for $J_7$ and we omit the details here.
\begin{align*}
\left|\left\langle \frac{(P-q-R\theta)\theta}{R}\partial_y\tilde{q}\partial_y\tilde{u},\tilde{u}\right\rangle_{H^{2, 1}}\right|\le CD(t)^{\frac12}E(t).
\end{align*}
Notice that the other components of $\partial_yf({\bf{v}}, \partial_y{\bf{v}})$ can be bounded analogically. Thus $L_3$ has the bound.
\begin{align}\label{4.28}
|L_3|\le CD(t)E(t)^{\frac12}.
\end{align}
As for $L_4$, by integration by parts
\begin{align*}
L_4&=\left\langle B({\bf{v}})\partial_y^2 {\bf{v}}, \partial_y^2 {\bf{v}}\right\rangle_{H^{2, 0}}-\left\langle B({\bf{v}})\partial_y^2 {\bf{v}}, \partial_y {\bf{v}}\right\rangle_{H^{2, 0}}|_{y=0}\\
&=\left\langle B({\bf{v}})\partial_\tau^2\partial_y^2 {\bf{v}}, \partial_\tau^2\partial_y^2 {\bf{v}}\right\rangle
+\left\langle \partial_\tau B({\bf{v}})\partial_\tau\partial_y^2 {\bf{v}}, \partial_\tau^2\partial_y^2 {\bf{v}}\right\rangle\\
&\quad+\left\langle \partial_\tau^2 B({\bf{v}})\partial_y^2 {\bf{v}}, \partial_\tau^2\partial_y^2 {\bf{v}}\right\rangle
-\left\langle B({\bf{v}})\partial_y^2 {\bf{v}}, \partial_y {\bf{v}}\right\rangle_{H^{2, 0}}|_{y=0}\\
&=:L_4^1+L_4^2+L_4^3+L_4^4.
\end{align*}
Since $B({\bf{v}})$ is positive definite, then
\begin{align*}
L_4^1\ge C_2\|\partial_y{\bf{v}}\|_{H^{2, 1}}^2.
\end{align*}
The next two terms can be estimated directly by using Gagliardo-Nirenberg inequality,
\begin{align*}
|L_4^2|+|L_4^3|\le CD(t)E(t)^{\frac12}+Cf(t)D(t)^{\frac12}E(t)^{\frac12}.
\end{align*}
The boundary term $L_4^4$ also can be handled as $J_9^{4, 3}$. We omit the detail and only list the estimate.
\begin{align*}
|L_4^4|\le \frac{C}{M^{\frac14}}D(t)+CD(t)E(t)^{\frac14}.
\end{align*}
Thus, we deduce that
\begin{align}\label{4.29}
L_4\ge C_2\|\partial_y {\bf{v}}\|_{H^{2, 1}}^2-CD(t)^{\frac12}E(t)-CD(t)E(t)^{\frac14}-\frac{C}{M^{\frac14}} D(t)-Cf(t)D(t)^{\frac12}E(t)^{\frac12}.
\end{align}
Finally, by (\ref{4.7}) again, the last term also can be bounded by
\begin{align}\label{4.30}
|L_5|\le Cf(t)E(t)^{\frac12}+Cf(t)E(t)+Cf(t)D(t)^{\frac12}E(t)^{\frac12}.
\end{align}
Collecting all the estimates (\ref{4.26})-(\ref{4.30}) together, we conclude that
\begin{align}\label{4.31}
&\frac d{dt}\sum\limits_{|\alpha|=0}^{2}\left(\|\sqrt{G_1}\partial_\tau^\alpha\partial_y\tilde{u}\|_{L^2}^2+\|\sqrt{G_2}\partial_\tau^\alpha\partial_y\tilde{\theta}\|_{L^2}^2+\|\sqrt{G_3}\partial_\tau^\alpha\partial_y\tilde{q}\|_{L^2}^2\right)+C_1\|\partial_y(\tilde{u}, \tilde{\theta}, \tilde{q})\|_{H^{2, 1}}^2\notag\\
&\le CD(t)E(t)^{\frac14}+CD(t)^{\frac 12}E(t)+\frac{C}{M^{\frac14}}D(t)+Cf(t)E(t)^{\frac12}+Cf(t)D(t)^{\frac12}E(t)^{\frac12}.
\end{align}
Consequently, we finish the proof of Proposition \ref{Prop4.5}.
\end{proof}

Plugging Proposition \ref{Prop4.4} and \ref{Prop4.5}, and taking $M$ large enough, we infer that
\begin{align}\label{4.32}
\frac d{dt}&\sum\limits_{\beta=0}^{1}\sum\limits_{|\alpha|=0}^{3-\beta}\left(\|\sqrt{G_1}\partial_\tau^\alpha\partial_y^\beta\tilde{u}\|_{L^2}^2+\|\sqrt{G_2}\partial_\tau^\alpha\partial_y^\beta\tilde{\theta}\|_{L^2}^2+\|\sqrt{G_3}\partial_\tau^\alpha\partial_y^\beta\tilde{q}\|_{L^2}^2\right)+C_0D(t) \notag\\
&\le CD(t)^{\frac 14}E(t)^{\frac54}+CD(t)^{\frac 12}E(t)+CD(t)E(t)^{\frac14}+Cf(t)E(t)^{\frac12}+Cf(t)D(t)^{\frac12}E(t)^{\frac12}.
\end{align}
Thus the proof of Proposition \ref{Prop4.3} is done.

\subsection{Lower bound estimate of lifespan of solutions}
Now we begin to prove the Theorem \ref{Thm2} in this section. The initial condition (\ref{4.3}) together with the coordinate transformation implies that
\begin{align}\label{4.33}
E(0)\le 2\varepsilon^2.
\end{align}
In addition, in view of Proposition \ref{Prop4.3}, by H\"older's inequality and using the smallness of $\varepsilon$, we have
\begin{align}\label{4.34}
&\frac d{dt}\sum\limits_{\beta=0}^{1}\sum\limits_{|\alpha|=0}^{3-\beta}\left(\|\sqrt{G_1}\partial_\tau^\alpha\partial_y^\beta\tilde{u}\|_{L^2}^2+\|\sqrt{G_2}\partial_\tau^\alpha\partial_y^\beta\tilde{\theta}\|_{L^2}^2+\|\sqrt{G_3}\partial_\tau^\alpha\partial_y^\beta\tilde{q}\|_{L^2}^2\right)+\frac{C_0}{2}D(t)\notag\\
\le &CE(t)^{\frac53}+Cf(t)E(t)^{\frac12}.
\end{align}
Suppose that $(0, T_\varepsilon)$ is the maximum interval that the a priori assumption (\ref{4.16}) holds. Then integrate it over $(0, t)$ yields that
\begin{align*}
E(t)&\le E(0)+\int_0^{t} CE(t)^{\frac53}+Cf(t)E(t)^{\frac12} \;dt \\
&\le \left(32C\varepsilon^{\frac43}t+8^\frac12 C\varepsilon^\sigma\int_0^t g(t)\;dt+2\right)\varepsilon^2\\
&\le (32C\varepsilon^{\frac43}t+1+2)\varepsilon^2.
\end{align*}
Here, we used the assumption (\ref{4.2}), (\ref{4.16}) and (\ref{4.33}) in the second line. Consequently, take $T_\varepsilon=\frac{1}{32C}\varepsilon^{-\frac43}$, and we deduce that for any $t\in (0, T_\varepsilon)$,
\begin{align*}
E(t)\le 4\varepsilon^2.
\end{align*}
From which, we close the a priori estimate and finish the proof of Theorem \ref{Thm2}.

\section*{Acknowledgement}
The research of F. Xie's research was supported by National Natural Science Foundation of China No.11831003, Shanghai Science and Technology Innovation Action Plan No. 20JC1413000 and Institute of Modern Analysis-A Frontier Research Center of Shanghai.

\bigskip


\begin{thebibliography}{99} 	

\bibitem{AWXY} R. Alexander, Y.-G. Wang, C. Xu, T. Yang, Well-posedness of the Prandtl equation in Sobolev spaces, J. Amer. Math. Soc., 28 (2015), 745-784.	

\bibitem{CWZ} D. Chen, Y. Wang, Z. Zhang, Well-posedness of the linearized Prandtl equation around a non-monotonic shear flow. Ann. Inst. H. Poincar\'e C Anal. Non Lin\'eaire. 35 (2018), 1119-1142.

\bibitem{CRWZ} D. Chen, S. Ren, Y. Wang, Z Zhang, Long time well-posedness of the MHD boundary layer equation in Sobolev space. Anal. Theory Appl., 36 (2020), 1-18.

\bibitem{DG} H. Dietert, D. Gerard-Varet, Well-posedness of the Prandtl equation without any structural assumption. Ann. PDE 5(1), Art. 8, 51, 2019.

\bibitem{EE} W. E, B. Engquist, Blowup of solutions of the unsteady Prandtl’s equation, Comm. Pure Appl. Math., 50 (1997), 1287-1293.

\bibitem{GD} D. G\'erard-Varet, E. Dormy, On the ill-posedness of the Prandtl equations, J. Amer. Math. Soc., 23 (2010), 591-609.

\bibitem{GM} D. G\'erard-Varet, N. Masmoudi, Well-posedness for the Prandtl system without analyticity or monotonicity, Ann. Sci. \'Ec. Norm. Sup\'er., 48 (2015), 1273-1325.

\bibitem{GN} D. G\'erard-Varet, T. Nguyen, Remarks on the ill-posedness of the Prandtl equation, Asymptot. Anal., 77 (2012), 71-88.

\bibitem{HLY} Y. Huang, C.-J. Liu, T. Yang, Local-in-time well-posedness for compressible MHD boundary layer, J. Differ. Equ., 266 (2019), 2978-3013.

\bibitem{IV} M. Ignatova, V. Vicol, Almost global existence for the Prandtl boundary layer equations, Arch. Ration. Mech. Anal., 220 (2016), 809-848.

\bibitem{LCS} M. Lombardo, M. Cannone, M. Sammartino, Well-posedness of the boundary layer equations, SIAM J. Math. Anal., 35 (2003), 987-1004.

\bibitem{LX} S. Li, F. Xie, Global solvability of 2D MHD boundary layer equations in analytic function spaces. J. Differ. Equ., 299 (2021), 362-401.

\bibitem{LNT} W.-X. Li, N. Masmoudi, T. Yang, Well-posedness in Gevrey function space for 3D Prandtl equations without Structural Assumption. Comm. Pure Appl. Math. doi:10.1002/cpa.21989.

\bibitem{LY2} W.-X. Li, T. Yang, Well-posedness in Gevrey space for the Prandtl equations with non-degenerate points, J. Eur. Math. Soc. (JEMS), 22 (2020), 717-775.

\bibitem{LWXY} C.-J. Liu, D. Wang, F. Xie, T. Yang, Magnetic effects on the solvability of 2D MHD boundary layer equations without resistivity in Sobolev spaces. J. Funct. Anal., 279 (2020), 45 pp.

\bibitem{LWY} C.-J. Liu, Y.-G. Wang, T. Yang, Global existence of weak solutions to the three- dimensional Prandtl equations with a special structure, Discrete Contin. Dyn. Syst. Ser. S, 9 (2016),  2011-2029.

\bibitem{LWY2} C.-J. Liu, Y.-G. Wang, T. Yang, A well-posedness theory for the Prandtl equations in three space variables, Adv. Math., 308 (2017), 1074-1126.
	
\bibitem{LWY3} C.-J. Liu, Y.-G. Wang, T. Yang, On the ill-posedness of the Prandtl equations in three space dimensions, Arch. Ration. Mech. Anal., 220 (2016), 83-108.

\bibitem{LXY2} C.-J. Liu, F. Xie, T. Yang, MHD boundary layers theory in Sobolev spaces without mono- tonicity I: Well-posedness theory, Comm. Pure Appl. Math., 72 (2019), 63-121.

\bibitem{LXY3} C.-J. Liu, F. Xie, T. Yang, Justification of Prandtl ansatz for MHD boundary layer, SIAM J. Math. Anal., 51(2019), 2748-2791.

\bibitem{LY} C.-J. Liu, T. Yang, Ill-posedness of the Prandtl equations in Sobolev spaces around a shear flow with general decay, J. Math. Pures Appl., 108 (2017), 150-162.

\bibitem{LZ3} N. Liu, P. Zhang, Global small analytic solutions of MHD boundary layer equations, J. Differ. Equ., 281 (2021), 199-257.

\bibitem{MW} N. Masmoudi, T. Wong, Local-in-time existence and uniqueness of solutions to the Prandtl equations by energy methods, Comm. Pure Appl. Math., 68 (2015), 1683-1741.

\bibitem{O} O. Oleinik, The Prandtl system of equations in boundary layer theory, Dokl. Akad. Nauk SSSR, 4 (1963), 583-586.

\bibitem{OS}  O. Oleinik, V. N. Samokhin, Mathematical Models in Boundary Layers Theory, Chapman \& Hall/CRC, Boca Raton, FL, 1999.

\bibitem{PZ} M. Paicu, P. Zhang, Global existence and the decay of solutions to the Prandtl system with small analytic data, Arch. Ration. Mech. Anal., 241 (2021), 403-446.

\bibitem{P} L. Prandtl, Uber fl\"ussigkeits-bewegung bei sehr kleiner reibung. Verhandlungen des III, Internationalen Mathematiker Kongresses, Heidelberg, Teubner, Leipzig, 1904, 484-491.

\bibitem{SC} M. Sammartino, R. Caflisch, Zero viscosity limit for analytic solutions, of the Navier-Stokes equation on a half-space. I. Existence for Euler and Prandtl equations, Comm. Math. Phys., 192 (1998), 433-461.

%\bibitem{WW2} C. Wang, Y. Wang, On the hydrostatic approximation of the MHD equations in a thin strip, SIAM J. Math. Anal., 54 (2022), 1241-1269.

\bibitem{XY}  F. Xie, T. Yang, Lifespan of solutions to MHD boundary layer equations with analytic perturbation of general shear flow, Acta Math. Appl. Sin. Engl. Ser., 35 (2019), 209-229.

\bibitem{XZ} Z. Xin and L. Zhang, On the global existence of solutions to the Prandtl system, Adv. Math., 181 (2004), 88-133.

\bibitem{ZZ} P. Zhang, Z. Zhang, Long time well-posedness of Prandtl system with small and analytic initial data, J. Funct. Anal., 270 (2016), 2591-2615.
		
\end{thebibliography}
\end{document}